\documentclass[a4paper,11pt]{article}
\usepackage[utf8]{inputenc}
\usepackage{color}
\usepackage{hyperref}
\usepackage{amsfonts}
\usepackage{bbding}
\usepackage[all]{xy}
\usepackage{ mathdots,amsmath,amscd,amssymb,latexsym,srcltx,indentfirst,titlesec}
\usepackage{enumitem}
\usepackage{multirow}
\usepackage{amsthm}
\usepackage{booktabs}
\usepackage{graphicx}
\usepackage{rotating}
\usepackage{ulem}
\usepackage{mathptmx}
\usepackage{hyphenat}

\topmargin 0cm \oddsidemargin 0.66cm \evensidemargin 0.66cm
\textwidth 14.66cm \textheight 22.23cm
\setitemize[1]{itemsep=0pt,partopsep=0pt,parsep=\parskip,topsep=5pt}

\numberwithin{equation}{section}
\newtheorem{theorem}{Theorem}[section]
\newtheorem{lemma}[theorem]{Lemma}
\newtheorem{proposition}[theorem]{Proposition}
\newtheorem{corollary}[theorem]{Corollary}

\newtheorem{definition}[theorem]{Definition}

\allowdisplaybreaks

\newcommand{\Aut}{\operatorname{Aut}}

\newcommand{\rB}{\mathrm{B}}

\newcommand{\rF}{\mathrm{F}}
\newcommand{\rG}{\mathrm{G}}

\newcommand{\fG}{\mathbf{G}}
\newcommand{\Fpbar}{\overline{\mathbb{F}}_p}
\newcommand{\Ftwobar}{\overline{\mathbb{F}}_2}
\newcommand{\Fthreebar}{\overline{\mathbb{F}}_3}

\newcommand{\BG}{B_0(G)}
\newcommand{\BN}{B_0(N)}

\begin{document}

\makeatletter

\newdimen\bibspace
\setlength\bibspace{2pt}   
\renewenvironment{thebibliography}[1]{%
 \section*{\refname 
       \@mkboth{\MakeUppercase\refname}{\MakeUppercase\refname}}%
     \list{\@biblabel{\@arabic\c@enumiv}}%
          {\settowidth\labelwidth{\@biblabel{#1}}%
           \leftmargin\labelwidth
           \advance\leftmargin\labelsep
           \itemsep\bibspace
           \parsep\z@skip     %
           \@openbib@code
           \usecounter{enumiv}%
           \let\p@enumiv\@empty
           \renewcommand\theenumiv{\@arabic\c@enumiv}}%
     \sloppy\clubpenalty4000\widowpenalty4000%
     \sfcode`\.\@m}
    {\def\@noitemerr
      {\@latex@warning{Empty `thebibliography' environment}}%
     \endlist}

\makeatother

\title{The block graph of a finite group
\thanks{The first and the third authors gratefully acknowledge
financial support by the ERC Advanced Grant 291512, and the third author also gratefully acknowledges
financial support by the SFB-TRR195. The second author deeply thanks financial support by China Scholarship Council (201608360074), the National Natural Science Foundation of China (11661042) and (11471054) and the Project (GJJ150347) from Educational Department of Jiangxi Province.}}

%

\author{Julian Brough$^{1a}$,  Yanjun Liu$^{2b}$ and Alessandro Paolini$^{3a}$\\
{ \scriptsize $^a$FB Mathematik, TU Kaiserslautern, Postfach 3049, 67653 Kaiserslautern, Germany}\\
{ \scriptsize $^b$College of Mathematics and Information Science, Jiangxi Normal University, Nanchang, 330022, China}\\
{\footnotesize E-mail: $^1$brough@mathematik.uni-kl.de;  \,\,$^2$liuyanjun@pku.edu.cn; \,\, $^3$paolini@mathematik.uni-kl.de
}}
\date{ }

\maketitle

\begin{abstract} This paper studies intersections of principal blocks of a finite group with respect to \mbox{different} primes.
We first define the block graph of a finite group $G$, whose vertices are the prime divisors of $|G|$ and
there is an edge between two vertices $p\neq q$ if and only if the principal $p$- and $q$-blocks of $G$ have a nontrivial common
 complex irreducible character of $G$.
Then we determine the block graphs of finite simple groups, which turn out to be complete except
those of $J_1$ and $J_4$. Also, we determine exactly when the Steinberg character of a finite simple group of Lie type lies in a principal block.
Based on the above investigation, we obtain a criterion for the $p$-solvability of a finite group which in particular
leads to an equivalent condition for the solvability of a finite group. Thus, together with two recent results of Bessenrodt and Zhang,
 the nilpotency, $p$-nilpotency and solvability of a finite group can be characterized by intersections of principal blocks of 
 some quotient groups. 

\vspace{2ex}

 {\small  Keywords:}  Group, principal block, block graph, solvability.

 { \small 2010 Mathematics Subject Classification.}  20C20; 20C33; 20D06

\end{abstract}

\section{Introduction}

Let $G$ be a finite group and  $p,q$ two primes. The question of
when a $p$-block of $G$ is also a $q$-block of $G$ was studied by  Navarro and Willems in \cite{NW97}.
 This led to the investigation of block distributions of complex irreducible characters of a finite group
with respect to  different primes. For instance,
 Bessenrodt, Malle and Olsson \cite{BMO06} introduced the concept of block separability of characters,
and   Navarro, Turull and Wolf \cite{NTW05} discussed solvable groups that are block separated.
In a series of papers, Bessenrodt and Zhang generally investigated block separations, inclusions and coverings of characters of a finite group,
see \cite{BZ08} and \cite{BZ11}.
Motivated by their work, we investigate block separations  of characters  from a graph-theoretical point of view.


Let ${\rm Irr}(G)$ be the set of complex  irreducible
characters of $G$, and denote by $B_0(G)_p$
 the principal $p$-block of $G$ and by ${\rm Irr}(B_0(G)_p)$
 the set of complex irreducible characters of $G$ contained in $B_0(G)_p$.

\begin{definition} We construct the block graph $\Gamma_B(G)$ of a finite group $G$ as follows. The vertices are the prime divisors of $|G|$, and
there is an edge between two vertices $p\neq q$ if and only if ${\rm Irr}(B_0(G)_p)\cap {\rm Irr}(B_0(G)_q)\neq \{1_G\}$.
\end{definition}

By the definition,  an equivalent statement of \cite[Theorem 4.1]{BZ08}  is that
 the block graph of a finite group consists of isolated vertices if and only if the group is nilpotent.
 So, for non-nilpotent $\{p,q\}$-groups, their block graphs are always of the form  $\xymatrix{ \bullet \ar@{-}[r] &  \bullet}$.
 Our first main result is to determine the block graphs of  finite nonabelian simple groups, which turn out to be
 seemingly opposite to the situation for nilpotent groups.

\begin{theorem} \label{graphofsimple} The  block graph of
a  finite nonabelian simple group $S$ is
complete except when $S=J_1$ (resp. $J_4$), in which case only the primes $p=3$ and $q=5$ (resp. $p=5$ and $q=7$) are not adjacent in the block graph of $S$.
\end{theorem}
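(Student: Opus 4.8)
The plan is to invoke the classification of finite simple groups and, for each nonabelian simple $S$ and each pair of distinct primes $p,q$ dividing $|S|$ (apart from the two stated exceptions), to exhibit a nontrivial $\chi\in\Irr(S)$ with $\chi\in\Irr(B_0(S)_p)\cap\Irr(B_0(S)_q)$; conversely, for $S\in\{J_1,J_4\}$ one checks that the indicated pair has intersection exactly $\{1_S\}$ while every other pair of primes is still adjacent. The basic membership device is the central‑character test: $\chi\in\Irr(B_0(S)_p)$ if and only if $\omega_\chi(\widehat K)\equiv|K|\pmod{\mathfrak p}$ for every $p$‑regular class $K$; in practice we use its consequences that a character of $p'$‑degree lies in a unique $p$‑block of full defect, that decomposition matrices pin down the principal block, and the block‑theoretic machinery for groups of Lie type described below. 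Throughout, $S$ splits into the three families: alternating, sporadic, of Lie type.

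\textbf{Alternating groups.} For $A_n$ with $n\ge 5$ we work through $S_n$. By the Nakayama conjecture, $\chi^\lambda\in\Irr(B_0(S_n)_p)$ precisely when $\lambda$ and $(n)$ have the same $p$‑core, so it suffices to find, for $n$ large relative to $p$ and $q$, a partition $\lambda$ of $n$ with $\lambda\ne(n),(1^n)$, with the same $p$‑core as $(n)$, and with the same $q$‑core as $(n)$. This is an elementary (but slightly delicate) existence statement: for instance one relocates a bounded number of boxes of $(n)$ without disturbing either core, the needed lower bound on $n$ being mild, and the finitely many small $n$ are checked directly. One then transfers to $A_n$: since $B_0(S_n)_p$ covers only $B_0(A_n)_p$, every irreducible constituent of $\chi^\lambda|_{A_n}$ lies in $\Irr(B_0(A_n)_p)$ (and likewise for $q$); as $\chi^\lambda(1)>1$ this produces a nontrivial character of $A_n$ in $\Irr(B_0(A_n)_p)\cap\Irr(B_0(A_n)_q)$, giving the edge $\{p,q\}$.

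\textbf{Groups of Lie type.} Let $S$ be simple of Lie type over $\mathbb F_q$, with $q$ a power of the defining prime $r$, and split $\{p,q\}$ into two cases. If one prime is $r$, say $p=r$: it is known (Humphreys) that the non‑principal $r$‑blocks of $S$ all have defect zero, i.e. $\Irr(B_0(S)_r)=\{\chi\in\Irr(S):\chi(1)_r<|S|_r\}$, so it is enough to find a nontrivial character of positive $r$‑defect lying in $\Irr(B_0(S)_q)$. Realizing $S$ inside a dual pair and choosing a nontrivial $q$‑element $s$ of the dual group, the associated semisimple character $\chi_s$ has $r'$‑degree, is nontrivial since $s\ne 1$, and lies in $\Irr(B_0(S)_q)$ because its $q$‑block depends only on the (trivial) $q'$‑part of $s$; this yields the edge $\{r,q\}$ (the finitely many configurations where the block statement needs care, because $q$ is bad or divides $|Z(\mathbf G_{\mathrm{sc}}^F)|$, are settled by using a non‑Steinberg unipotent character of positive $r$‑defect lying in $\Irr(B_0(S)_q)$, or by direct computation). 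If neither prime is $r$, the Steinberg character $\mathrm{St}$ is the natural bridge: $\mathrm{St}$ lies in the principal $e_\ell(q)$‑Harish--Chandra series for every $\ell$, hence in $\Irr(B_0(S)_\ell)$ for every $\ell\ne r$ outside a short explicit list — here one invokes the separate determination of exactly when $\mathrm{St}$ lies in a principal block, together with the description of unipotent $\ell$‑blocks of Broué--Malle--Michel and Cabanes--Enguehard (and Fong--Srinivasan for classical groups, where this reduces to a core computation as in the symmetric group). When $\mathrm{St}\in\Irr(B_0(S)_p)\cap\Irr(B_0(S)_q)$ we obtain the edge; for the remaining finitely many triples $(S,p,q)$ — involving only bad primes $p,q\in\{2,3\}$ and groups of bounded rank and bounded $q$ — one instead exhibits a suitable non‑Steinberg unipotent character or checks the group by hand.

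\textbf{Sporadic groups.} These are finitely many, and for each one reads the irreducible characters in every principal block from the ordinary and modular \textsc{Atlas} (equivalently, the known decomposition matrices). For every sporadic group other than $J_1$ and $J_4$ one verifies that each pair of prime divisors of the order is joined by a common nontrivial character. For $J_1$ one finds $\Irr(B_0(J_1)_3)\cap\Irr(B_0(J_1)_5)=\{1_{J_1}\}$ while all other pairs of primes are adjacent, and similarly for $J_4$ with the primes $5$ and $7$; this produces the two exceptions. The principal obstacle is the Lie type case: making the argument uniform over all families — including the Suzuki, Ree and triality groups, where the Steinberg/Harish--Chandra formalism degenerates — and, above all, being exhaustive about the low‑rank, small‑$q$, and bad‑prime configurations where the generic block description does not directly apply, which must be disposed of by ad hoc character‑theoretic arguments or explicit computation.
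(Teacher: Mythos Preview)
Your treatment of the alternating and sporadic cases is fine and matches the paper's (which simply cites \cite{BZ08} and a GAP check). The genuine gap is in the Lie type case with both primes $\ell_1,\ell_2$ different from the defining characteristic. Your plan is to use the Steinberg character $St$ as the common witness and to assert that the cases where $St\notin\Irr(B_0(S)_{\ell_i})$ form ``finitely many triples $(S,p,q)$ involving only bad primes $p,q\in\{2,3\}$ and groups of bounded rank and bounded $q$''. This is false. By the very result you invoke (Theorem~\ref{St} of the paper), $St\in\Irr(B_0(S)_\ell)$ if and only if $e_\ell(q)$ is a \emph{regular} number of $S$, and for every family of classical and exceptional groups there are non-regular $e$ that occur as $e_\ell(q)$ for infinitely many $q$ and primes $\ell$. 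For instance, for $E_8(q)$ the cyclotomic factors $\Phi_7,\Phi_9,\Phi_{14},\Phi_{18}$ divide the order but $7,9,14,18$ are not regular numbers of $W(E_8)$; a Zsigmondy prime $\ell\mid\Phi_7(q)$ then has $e_\ell(q)=7$, so $St\notin\Irr(B_0(E_8(q))_\ell)$, and there are infinitely many such $(q,\ell)$. Analogous non-regular $e$ occur for $D_n$, $E_6$, $E_7$, etc. Hence your ``residual finite list'' is in fact an infinite family, and the Steinberg-character strategy does not close the argument. (Incidentally, you also have the very twisted groups backwards: for ${}^2B_2,{}^2G_2,{}^2F_4,{}^3D_4$ \emph{every} relevant $e$ is regular and $St$ works for all $\ell$; see \cite{Hi10}.)

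The paper bypasses this entirely with a uniform orthogonality argument. For each $\ell_i$ take a Sylow $e_i$-split Levi $\mathbf{L}_i$ and consider $R_{\mathbf{L}_i}^{\mathbf G}(1_{\mathbf L_i^F})$. By \cite[Theorem~A]{KM15} all constituents of this lie in the principal $\ell_i$-block. One shows (Lemma~\ref{no common Sylow tori}) that no $F$-stable maximal torus of $\mathbf L_1$ or $\mathbf L_2$ contains both a Sylow $e_1$- and a Sylow $e_2$-torus, whence $\langle R_{\mathbf{L}_1}^{\mathbf G}(1),\,R_{\mathbf{L}_2}^{\mathbf G}(1)\rangle=0$; since each contains $1_{\mathbf G^F}$ with multiplicity one, they must share a further nontrivial common constituent, which is the desired witness. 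This works uniformly for all $\ell_1\neq\ell_2$ and all groups, with no residual cases. To repair your argument you would need an alternative common character when $e_{\ell_1}(q)$ or $e_{\ell_2}(q)$ is non-regular, and the paper's orthogonality trick is precisely such a replacement.
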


Note that the block graphs of alternating groups and sporadic simple groups
are known according to \cite[Propositions 3.2 and 3.5]{BZ08}. Therefore, in order to prove Theorem \ref{graphofsimple}, it remains to determine the
block graphs of simple groups of Lie type.
Our strategy is to investigate the block distribution of unipotent characters  based on the recent results of Kessar and Malle in
\cite{KM15} about Lusztig induction.

A problem related to the investigation,  which is motivated by \cite{Hi10} and \cite{SL13}, is to make sure exactly when
the Steinberg character of a finite simple group of Lie type lies in a principal block. This is controlled by  regular numbers
introduced in Definition \ref{regular}.

\begin{theorem} \label{St} Let $S$ be a finite simple group of Lie type defined
over a finite field $\mathbb{F}_q$ of characteristic $p$, and let $\ell$ be a prime
different from $p$. Write
$$e=e_{\ell}(q):={\rm multiplicative\ order\ of}\ q\
          \left\{\begin{array}{l}
                        {\rm modulo\ } \ell {\rm\  if\ } \ell {\rm\ is\ odd,} \\
                       {\rm modulo\ } 4 {\rm \ if\ } \ell=2,
                         \end{array}
                     \right.
 $$
 and view the Tits group as a sporadic simple group.
Then the Steinberg character of $S$ lies in the principal $\ell$-block of $S$
if and only if $e$ is a regular number of $S$.
\end{theorem}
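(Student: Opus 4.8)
The plan is to reduce the statement to a characterization of when the Steinberg character lies in the principal $\ell$-block via the theory of $\ell$-defects and central characters, and then to match this with the combinatorics of regular numbers.

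First, recall that for the simple group $S = \mathbf{G}^F/\ZZ$ arising from a connected reductive group $\mathbf{G}$ with Frobenius $F$, the Steinberg character $\mathrm{St}$ of the associated finite group of Lie type $G = \mathbf{G}^F$ has degree equal to the $p$-part $|G|_p$, hence is of $p$-defect zero; for a prime $\ell \neq p$, the Steinberg character lies in the principal $\ell$-block of $G$ if and only if $\mathrm{St}$ and the trivial character agree on $\ell$-singular elements modulo the maximal ideal, equivalently (by Brauer's characterization via central characters, or by the theory of $\ell$-blocks of finite reductive groups) if and only if the $\ell$-part of $|G|/|G|_p$ divides $|C_G(s)|$ for a suitable semisimple $\ell$-element — more precisely, one uses the criterion that $\mathrm{St} \in B_0(G)_\ell$ precisely when $\ell \nmid [\mathrm{St} \text{-part of the order}]$ in the appropriate sense. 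The clean way to phrase this is through cyclotomic polynomial factorizations: write $|G|_{p'} = \prod_d \Phi_d(q)^{a_d}$; then $\mathrm{St}(1)_\ell$ interacts with $\Phi_e(q)_\ell$, and the key classical fact (see e.g. work of Hiss, and of Geck–Hiss on basic sets) is that $\mathrm{St}$ belongs to the principal $\ell$-block if and only if a Sylow $\Phi_e$-torus is ``as large as possible'', i.e. $e$ is a regular number in the sense of Springer–Lehrer as encoded in Definition \ref{regular}.

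The concrete steps I would carry out: (1) Reduce from the simple group $S$ to the universal (simply connected) version $G_{\mathrm{sc}}$ or to $\mathbf{G}^F$ itself, checking that passing through the center and through diagonal automorphisms does not affect membership of $\mathrm{St}$ in the principal block — this uses that $\mathrm{St}$ is invariant and that principal blocks behave well under central quotients (Clifford theory for blocks, and the fact that $\ZZ(G)$ acts trivially on $B_0$). (2) Express the statement via the formula of Broué–Michel / Cabanes–Enguehard for the distribution of unipotent characters into $\ell$-blocks: $\mathrm{St}$ lies in the principal block if and only if the principal block's defect group is an $\ell$-Sylow of $\Phi_e(q)$-part, and $\mathrm{St}$ has trivial $\Phi_e$-defect; the condition for this to hold is exactly that $e = e_\ell(q)$ is a regular number. (3) Translate ``regular number'' into the order-polynomial language: $e$ is regular for $S$ iff $\Phi_e$ divides the generic order $|\mathbf{G}^F|$ to the maximal power compatible with the degrees of the Weyl group, equivalently iff a $\Phi_e$-Sylow torus contains a regular element; then invoke Definition \ref{regular} to close the loop. (4) Handle the finitely many exceptional cases — very small $q$, the Tits group ${}^2F_4(2)'$ (treated as sporadic per the statement), Suzuki and Ree groups where the relevant ``$q$'' is $\sqrt{q}$ or involves $\Phi_8, \Phi_{12}, \Phi_{24}$ — by direct inspection of character tables or block-theoretic databases.

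The main obstacle I expect is step (2)–(3): making the bridge between the block-theoretic condition (``$\mathrm{St} \in B_0(S)_\ell$'') and the purely combinatorial condition (``$e$ is a regular number'') completely rigorous and uniform across all Lie types. The subtlety is that the Steinberg character is unipotent, so its block is governed by the $d$-Harish-Chandra theory of Broué–Malle–Michel with $d = e$, and one must verify that $\mathrm{St}$ is $e$-cuspidal (or pair it with the correct $e$-cuspidal pair) precisely when the $\Phi_e$-Sylow torus is maximal, which is the defining property of a regular number $e \leq$ the largest degree of $W$; non-simply-laced groups, the groups of type ${}^2B_2, {}^2G_2, {}^3D_4, {}^2F_4$, and the interplay with $\ell = 2$ (where $e_2(q)$ is defined modulo $4$) all require careful bookkeeping. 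I would lean on the tables of regular numbers for Weyl groups (Springer) and the explicit lists of unipotent $\ell$-blocks to push this through, and I would isolate the characteristic-$2$ and twisted-type cases as separate lemmas rather than attempting a single uniform argument.
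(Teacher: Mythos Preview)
Your forward direction (regular $e \Rightarrow \mathrm{St} \in B_0(S)_\ell$) is essentially the paper's argument: when $e$ is regular the Sylow $e$-split Levi $\mathbf{L}$ is a torus $\mathbf{T}$, so $(\mathbf{T},1_{\mathbf{T}^F})$ is an $e$-Jordan-cuspidal pair, and since ${}^*R_{\mathbf{T}}^{\mathbf{G}}(\mathrm{St}_{\mathbf{G}^F})=\pm 1_{\mathbf{T}^F}$ the Steinberg character is a constituent of $R_{\mathbf{T}}^{\mathbf{G}}(1_{\mathbf{T}^F})$, hence in $B_0$ by \cite[Theorem~A(a)]{KM15}. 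The very twisted types are disposed of by Hiss's result, as you suggest.

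The converse is where your proposal has a genuine gap. You gesture at two mechanisms --- central characters on $\ell$-singular elements, and the $e$-Harish-Chandra partition of unipotent characters into blocks --- but you do not commit to either, and neither is made to work. For the $e$-Harish-Chandra route you would need that the $e$-cuspidal pairs $(\mathbf{L},1_{\mathbf{L}^F})$ and $(\mathbf{L},\mathrm{St}_{\mathbf{L}^F})$ determine \emph{distinct} $\ell$-blocks when $\mathbf{L}$ is not a torus; this is the bijection part of Kessar--Malle (not merely part~(a)), and it carries hypotheses on $\ell$ and on the type that you do not address. Your phrase ``$\mathrm{St}$ is $e$-cuspidal precisely when the $\Phi_e$-Sylow torus is maximal'' is not the right formulation: $\mathrm{St}$ always has $e$-cuspidal support $(\mathbf{L},\mathrm{St}_{\mathbf{L}^F})$ for the Sylow $e$-Levi $\mathbf{L}$, and the question is whether this pair is block-equivalent to $(\mathbf{L},1_{\mathbf{L}^F})$.

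The paper instead proves the converse by the central-character criterion, but with a concrete input you are missing. It first observes that non-regular $e$ forces $e\ge 3$ and hence $\ell\ge 5$, then establishes (Proposition~\ref{prop:ell5}) via root-system arguments that when $e$ is not regular there exists a nontrivial unipotent element $x$ centralizing a Sylow $\ell$-subgroup $R$: namely, the $F$-fixed points of the unipotent radical of an $F$-stable Borel of $\mathbf{L}$ commute with the reflection subgroup $W(\mathbf{L},\mathbf{T})^\perp$ and hence with $R$. Since $\mathrm{St}$ vanishes on non-semisimple elements while $|G:C_G(x)|$ is prime to $\ell$, the central characters of $\mathrm{St}$ and $1_G$ differ modulo~$\ell$ on the class of $x$, so $\mathrm{St}\notin B_0(S)_\ell$. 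This production of a $p$-element inside $C_G(R)$ is the substantive step that your outline does not supply; without it (or a fully justified invocation of the Kessar--Malle bijection), the converse does not go through.
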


We would like to mention that both irreducible constituents of the restriction of
the Steinberg character of the group ${}^2F_4(2)$ to the Tits group ${}^2F_4(2)'$
lie in the the principal $3$-, $5$- and $13$-blocks of ${}^2F_4(2)'$. So Theorem \ref{St}
also holds if $S={}^2F_4(2)'$ in some sense.


Inspired by Brauer's problem \cite{Br79} of
finding the relations between the properties of the $p$-blocks of characters of a finite group $G$ and
structural properties of $G$, we apply Theorem \ref{graphofsimple} 
 to obtain the following criterion for the $p$-solvability of a finite group from a local viewpoint of its block graph.

 \begin{theorem} \label{groupswithnotriangle}
 Let $G$ be a finite group and $p$ a prime divisor of $|G|$. If the block graph of
$G$ has no triangle containing $p$, then $G$ is $p$-solvable.
\end{theorem}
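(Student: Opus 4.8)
The plan is to prove the contrapositive: if $G$ is not $p$-solvable, then $\Gamma_B(G)$ contains a triangle through $p$. The first, purely group-theoretic, step is standard: since $G$ is not $p$-solvable, some composition factor of $G$ is a nonabelian simple group $S$ with $p\mid|S|$, and refining a chief series one obtains a chief factor $N/M$ of $G$ (so $M\trianglelefteq N\trianglelefteq G$) with $N/M\cong S^{n}$ for some $n\ge 1$. I would then invoke two elementary transfer principles. First, inflation along $G\twoheadrightarrow G/M$ carries $\Irr(B_0(G/M)_{\ell})$ into $\Irr(B_0(G)_{\ell})$ for every prime $\ell$ and sends $1_{G/M}$ to $1_G$, so $\Gamma_B(G/M)$ is a subgraph of $\Gamma_B(G)$; hence we may replace $G$ by $G/M$, i.e. assume $N\cong S^{n}\trianglelefteq G$. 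Second, $\Gamma_B(S)\subseteq\Gamma_B(S^{n})$, since the principal block of a direct product is the ``tensor'' of the principal blocks, so a nontrivial $\psi\in\Irr(B_0(S)_{\ell})\cap\Irr(B_0(S)_{\ell'})$ produces the nontrivial character $\psi\otimes 1_S\otimes\cdots\otimes 1_S$ of $B_0(S^{n})_{\ell}$ and of $B_0(S^{n})_{\ell'}$.

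The crucial input, which I would isolate and prove as a separate lemma, is a third transfer principle: \emph{if $N\trianglelefteq G$ and $\ell\neq\ell'$ are primes dividing $|N|$, then every edge of $\Gamma_B(N)$ joining $\ell$ and $\ell'$ is also an edge of $\Gamma_B(G)$}. Granting it and combining with the above (note $S$ and $S^{n}$ have the same prime divisors), one gets $\Gamma_B(S)\subseteq\Gamma_B(S^{n})\subseteq\Gamma_B(G)$ on the set of prime divisors of $|S|$, a set containing $p$. By Theorem~\ref{graphofsimple}, $\Gamma_B(S)$ is complete unless $S\in\{J_1,J_4\}$, in which case it is complete except for one missing edge; and since a nonabelian simple group has at least three prime divisors (Burnside), in every case each prime divisor of $|S|$ lies on a triangle of $\Gamma_B(S)$. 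This is clear when $\Gamma_B(S)$ is complete, and for $J_1$ (missing edge $\{3,5\}$) and $J_4$ (missing edge $\{5,7\}$) it is an immediate check (e.g. $\{2,3,7\}$ is a triangle through $3$ and $\{2,5,7\}$ one through $5$ for $J_1$, and $\{2,3,5\}$, $\{2,3,7\}$ serve for $5$, $7$ in $J_4$). Hence $\Gamma_B(G)$ has a triangle through $p$, contradicting the hypothesis, and the theorem follows.

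The real obstacle is the transfer lemma. The natural approach is Clifford theory of blocks: take $1_N\neq\theta\in\Irr(B_0(N)_{\ell})\cap\Irr(B_0(N)_{\ell'})$, observe that the inertia group $T=I_G(\theta)$ does not depend on the prime, and use the Clifford correspondence together with its compatibility with Brauer block induction to reduce to the case that $\theta$ is $G$-invariant. For $G$-invariant $\theta$ lying in the (necessarily $G$-invariant) blocks $B_0(N)_{\ell}$ and $B_0(N)_{\ell'}$, one must then produce a single character of $G$ over $\theta$ that belongs simultaneously to $B_0(G)_{\ell}$ and $B_0(G)_{\ell'}$; the delicate point is that several blocks of $G$ may cover $B_0(N)_{\ell}$, so the block of a suitable extension (or induced constituent) of $\theta$ has to be pinned down. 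This is exactly where the hypothesis $\ell,\ell'\mid|N|$ is used: it guarantees that $B_0(N)_{\ell}$ and $B_0(N)_{\ell'}$ have positive defect, so that their common character genuinely ``descends'' from $B_0(G)_{\ell}\cap B_0(G)_{\ell'}$ rather than being trapped in a defect-zero block of $N$ that could be expelled from the principal block on passing to $G$. Everything else — the chief-factor reduction, inflation, the direct-product computation, and the small amount of graph combinatorics for $J_1$ and $J_4$ — is routine.
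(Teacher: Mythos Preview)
Your ``transfer lemma'' is the gap, and it is exactly the difficulty the paper is built to avoid. Given a nontrivial $\theta\in\Irr(B_0(N)_{\ell})\cap\Irr(B_0(N)_{\ell'})$, there can be several $\ell$-blocks of $G$ covering $B_0(N)_{\ell}$, and likewise for $\ell'$; you need a single $\chi\in\Irr(G\mid\theta)$ lying in $B_0(G)_{\ell}$ \emph{and} $B_0(G)_{\ell'}$ simultaneously. Your Clifford sketch does not produce this: after passing to the inertia group $T$ you face the same problem for $T$ in place of $G$, and your appeal to ``positive defect'' of $B_0(N)_{\ell}$ says nothing about which block of $G$ (or $T$) a given constituent of $\theta^G$ lands in. The paper's Lemma~\ref{PrinUniqCovLiftEdge} is precisely the special case of your lemma under the extra hypothesis that $B_0(G)_{\ell}$ is the \emph{unique} block covering $B_0(N)_{\ell}$; without that hypothesis no such transfer is available.

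Accordingly, the paper does not try to lift edges from $S$ (or $S^n$) directly. It reduces instead to an almost simple quotient $\overline{M}$ with socle $S$ and invokes Theorem~\ref{Almostsimplewithtriangle}, whose proof (Sections~\ref{centralizers}--\ref{blockgalmostsimple}) occupies most of the paper: one shows that for the defining characteristic $p$ and for a carefully chosen Zsigmondy-type prime $r$ the centraliser of a Sylow subgroup already lies in the relevant normal subgroup, so Lemma~\ref{NormSylCentUniLiftPrin} forces uniqueness of the covering block and Lemma~\ref{PrinUniqCovLiftEdge} then lifts the edge. Your route, if it worked, would bypass all of this and need only Theorem~\ref{graphofsimple}; but the shortcut rests on an unproven (and, as stated, quite strong) assertion. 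If you can actually prove the transfer lemma you would have a genuinely simpler argument---but that is a theorem in its own right, not a routine step.
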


An application of Theorem \ref{groupswithnotriangle} and the celebrated Feit-Thompson theorem leads to 
an equivalent condition for the solvability of a finite group. This implies that, together with results of Bessenrodt and Zhang
\cite[Theorem 4.1]{BZ08} and \cite[Theorem 2.3]{BZ11}, the nilpotency, $p$-nilpotency and solvability of a finite group can be characterized by intersections of principal blocks of some quotient groups.

\begin{theorem} \label{equivalentcondition}
 Let $G$ be a finite group and $S(G)$ the largest normal solvable subgroup of $G$.
 Then $G$ is solvable if and only if the block graph of $G/S(G)$
has no triangle containing 2.
\end{theorem}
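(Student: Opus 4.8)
The plan is to deduce this from Theorem \ref{groupswithnotriangle} together with the Feit--Thompson odd order theorem, after reducing to the case where $S(G)=1$. First I would prove the forward direction: if $G$ is solvable, then $G/S(G)$ is solvable; but $S(G/S(G))=1$, and a nontrivial solvable group always has a nontrivial normal solvable subgroup, so $G/S(G)=1$. In that case the block graph of $G/S(G)$ is empty and vacuously has no triangle containing $2$. (One should check the degenerate cases $2 \nmid |G/S(G)|$ separately, but these are subsumed: if $G/S(G)$ has odd order it is solvable by Feit--Thompson, forcing $G/S(G)=1$ again; so in the forward direction $G/S(G)=1$ always.)

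For the converse, assume the block graph of $\overline{G} := G/S(G)$ has no triangle containing $2$. If $2 \nmid |\overline{G}|$, then $\overline{G}$ has odd order and is solvable by Feit--Thompson; combined with $S(\overline{G})=1$ this gives $\overline{G}=1$, hence $G = S(G)$ is solvable. If $2 \mid |\overline{G}|$, then the hypothesis says the block graph of $\overline{G}$ has no triangle containing the prime $2$, so Theorem \ref{groupswithnotriangle} (applied with $p=2$) yields that $\overline{G}$ is $2$-solvable. A $2$-solvable group of even order has a nontrivial normal subgroup that is either a $2$-group or of odd order; the latter is solvable by Feit--Thompson and the former is a $2$-group, hence in both cases $\overline{G}$ has a nontrivial normal solvable subgroup, contradicting $S(\overline{G})=1$ unless $\overline{G}=1$. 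Therefore $\overline{G}=1$ and $G=S(G)$ is solvable.

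The only genuinely substantive input is Theorem \ref{groupswithnotriangle}; everything else is a short group-theoretic packaging argument, so I do not anticipate a serious obstacle. The one point that requires a little care is the interplay of the two boundary situations --- $\overline{G}$ of odd order versus $\overline{G}$ of even order --- to make sure the hypothesis ``no triangle containing $2$'' is interpreted correctly when $2$ is not even a vertex of the block graph; I would state explicitly that in that case the condition holds vacuously and handle it via Feit--Thompson as above. I would also remark, as the authors do in the surrounding discussion, that this completes the trio of characterisations: nilpotency via \cite[Theorem 4.1]{BZ08}, $p$-nilpotency via \cite[Theorem 2.3]{BZ11}, and now solvability, each phrased purely in terms of intersections of principal blocks of suitable quotients.
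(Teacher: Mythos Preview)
Your proposal is correct and follows essentially the same route as the paper: apply Theorem \ref{groupswithnotriangle} with $p=2$ to get $2$-solvability of $G/S(G)$, then invoke Feit--Thompson to upgrade this to solvability. Your version is simply more explicit about the edge case $2 \nmid |G/S(G)|$ and unwinds the implication ``$2$-solvable $\Rightarrow$ solvable'' via the observation $S(G/S(G))=1$, whereas the paper states the conclusion in one line.
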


\begin{proof} It suffices to show the ``if" part. Suppose that the block graph of $G/S(G)$
has no triangle containing 2. By Theorem \ref{groupswithnotriangle} and the Feit-Thompson theorem,
we have that $G/S(G)$ is solvable, hence $G$ is solvable. 
\end{proof}

The proof of Theorem \ref{groupswithnotriangle} reduces to finding a triangle
in the block graph of an almost simple group corresponding to the centralizer
of some Sylow subgroup of a simple group $S$ in the automorphism
group $A$ of $S$.
Let $A_0$ be the subgroup of $A$ generated by $S$ and its diagonal automorphisms, and let
  $\widetilde{A}_0$ be  the subgroup of $A$ generated by $A_0$ and all the graph automorphisms of $S$. Notice that $L_3(2)\cong L_2(7)$.

\begin{theorem} \label{cent-all}
Let $S$ be a finite simple group of Lie type defined over a finite field $\mathbb{F}_q$, where
$q=p^f$. Let $r$ be a prime and  $R\in {\rm Syl}_r(S)$. Then
\begin{itemize}
  \item[$(i)$]  $C_{A}(R)\leq A_0$ if one of the following occurs:
      \begin{itemize}
        \item[$(1)$] The group $S$ and the prime $r$ are as listed in Table \ref{discretecases} or \ref{data}.
         \item[$(2)$] $S=A_1(q)$ and $q+1$ does not have a Zsigmondy prime, and $r$ is any prime divisor of $2(q-1)$.
      \end{itemize}
  \item[$(ii)$]   $C_{A}(R)\leq \widetilde{A}_0$  if one of the following occurs:
       \begin{itemize}
        \item[$(3)$] $S=A_2(4)$ and $r=7$, $S=B_2(8)$ and $r=7$, or
        $S=B_2(2^f)\ (2^f\neq 2, 8)$, and $r$ is a Zsigmondy prime of $2^f+1$ with respect to $(2,2f)$.
         \item[$(4)$] $S=G_2(3^f)\ (f>1)$,  and $r$ is a Zsigmondy prime of $3^f+1$.
      \end{itemize}
\end{itemize}

\begin{table}
\begin{center}
\caption{Discrete cases of $S$ and $r$}  \label{discretecases}
\begin{tabular}{cccccccc}
  \hline
  $S$ & $A_5(2)$  & $B_3(2)\cong C_3(2)$  & $D_6(2)$    & ${}^2A_3(2)\cong U_4(2)$  &  $G_2(3)$  & $F_4(2)$  & ${}^2F_4(2)'$  \\
  $r$ & 31              & 7                     & 7              & 5               &  13    & 17 & 13   \\
  \hline
 \end{tabular}
\end{center}
\end{table}
\end{theorem}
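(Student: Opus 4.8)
The plan is to treat the assertion as a finite collection of structural facts about centralizers of Sylow subgroups in automorphism groups of simple groups of Lie type, and to establish each case by identifying the relevant Sylow subgroup $R$ explicitly and then bounding $C_A(R)$ from outside. For the discrete cases in Table \ref{discretecases} and the data in Table \ref{data}, I would argue as follows: in each such $S$ the prime $r$ is chosen so that a Sylow $r$-subgroup $R$ is cyclic (typically $r$ divides a single cyclotomic factor $\Phi_d(q)$ of $|S|$ to the first power), and $R$ is generated by a regular semisimple element lying in a maximal torus $\mathbf{T}^F$ of a Sylow $\Phi_d$-torus type. One then computes $C_S(R) = \mathbf{T}^F$ (or a small extension thereof) using the structure of $d$-tori \`a la Brou\'e--Malle, and observes that no graph or field automorphism of $S$ can centralize such an element: a field automorphism of order $f'$ moves the eigenvalues of $R$ nontrivially (because $r \nmid q_0^d - 1$ for the relevant proper subfield order $q_0$), and a graph automorphism acts on the torus by the relevant Weyl-group-coset element, which has no nonzero fixed points on the rank-$1$ part supporting $R$. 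Hence $C_A(R) \le S \cdot \mathrm{diag} = A_0$. The individual groups in Table \ref{discretecases} are small enough that, where a uniform argument is awkward, one can instead cite the explicit character tables / maximal subgroup information in the ATLAS (e.g.\ for ${}^2F_4(2)'$, $F_4(2)$, $U_4(2)$, $G_2(3)$) to read off $C_A(R)$ directly.

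For part $(i)(2)$, with $S = A_1(q) = \mathrm{PSL}_2(q)$ and $q+1$ lacking a Zsigmondy prime: here $\mathrm{Out}(S)$ is generated by the diagonal automorphism (of order $\gcd(2,q-1)$) and the field automorphisms of order $f$. A prime $r \mid 2(q-1)$ has its Sylow subgroup $R$ contained, up to conjugacy, in the image of the split torus of order $q-1$ (for odd $r$) or in a Sylow $2$-subgroup whose structure is dihedral; in either case $C_S(R)$ is contained in the normalizer of that split torus, and one checks that a field automorphism of prime order $\ell$ centralizes a nontrivial $r$-element only if $r \mid q_0 - 1$ with $q = q_0^{\ell}$, which together with the Zsigmondy-failure hypothesis on $q+1$ (a condition isolating $q \in \{2^k-1 \text{-ish small cases}, 8\}$, so that $\mathrm{Out}$ is itself $2$-local and easy to control) forces $C_A(R) \le A_0$. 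I would spell out the short list of $q$ for which $q+1$ has no Zsigmondy prime (essentially $q \le 8$ together with $q$ a Mersenne prime situation) and verify the claim on that finite list.

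For part $(ii)$, the target is the larger subgroup $\widetilde{A}_0$, i.e.\ we now allow graph automorphisms but must still exclude field automorphisms; the cases are $A_2(4)$ with $r=7$, $B_2(2^f)$ with $r$ a Zsigmondy prime of $2^{2f}+1 = (2^f-1)(2^f+1)+\ldots$ dividing $\Phi_4(2^f)$ (including the sporadic $B_2(8)$, $r=7$), and $G_2(3^f)$ with $r$ a Zsigmondy prime of $3^{2f}+1$ — in each of these $r$ is a Zsigmondy prime for an exponent $2f$ (or for $3$ in the $A_2(4)$ case), so $R$ is cyclic, generated by a regular element of a $\Phi_{2f}$-torus (resp.\ $\Phi_3$-torus), and $C_S(R)$ is that torus. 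A field automorphism of prime order $\ell$ dividing $f$ (resp.\ $2f$) would force $r$ to divide $p^{d/\ell}-1$ for the corresponding smaller exponent, contradicting the Zsigmondy property; thus $C_A(R) \cap (\text{field part}) = 1$, leaving $C_A(R) \le \widetilde{A}_0$. The graph automorphism is genuinely permitted here precisely because in these rank-$2$ groups it acts on the relevant torus in a way compatible with fixing $R$ (e.g.\ in $B_2$ the graph automorphism and the torus of order $q^2+1$ interact so that an outer element can centralize a Zsigmondy element), which is why we cannot push the bound down to $A_0$.

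The main obstacle will be the uniform treatment of the action of field automorphisms on maximal tori containing $R$: one needs, for each family, a clean statement that a field automorphism fixing a generator of a $\Phi_d$-Sylow subgroup must be trivial, and this requires carefully matching the Zsigmondy hypothesis on the exponent with the divisibility of $q_0^d - 1$ over all proper subfields $\mathbb{F}_{q_0} \subset \mathbb{F}_q$. The rank-$1$ and rank-$2$ cases are small enough that this is manageable, but it is the step where the bookkeeping (tori, their orders, the Frobenius action, and the precise $\mathrm{Out}(S)$ structure) must be done with care; the discrete ATLAS cases are then a finite verification.
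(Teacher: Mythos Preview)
Your general strategy---locate $R$ in a specific torus, then show no coset of $A_0$ by an outer automorphism can centralize it---matches the paper's, but several steps are genuine gaps rather than routine bookkeeping.

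\textbf{Field automorphisms.} You argue that a field automorphism ``moves the eigenvalues of $R$ nontrivially'', but the question is whether some element $a_0\phi$ with $a_0\in A_0$ centralizes $R$, i.e.\ whether $x^{\phi}$ is $A_0$-conjugate to $x$. Conjugation by Weyl-group elements permutes eigenvalues, so the eigenvalue set being moved is not by itself a contradiction. The paper's Lemma~\ref{field-cent} closes this by showing that if $x^{\phi'g^{-1}}=x$ then the map $a\mapsto a^{p^k}$ on $R$ is realised inside $N_{A_0}(R)/C_{A_0}(R)$, whose order is at most $e$; since $\operatorname{ord}_r(p)=ef$ with $k<f$, this forces an inequality $ek<ef$ that is violated. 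You never invoke this normalizer bound, and without it the argument does not go through.

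\textbf{Graph automorphisms for $A_n$, $D_n$, $E_6$, $F_4$.} Your single sentence (``the graph automorphism acts on the torus by a Weyl-coset element with no fixed points on the rank-$1$ part'') does not constitute a proof, and the actual arguments are quite different in each case: for $A_n(q)$ one computes the inverse-transpose action on the Singer eigenvalues and checks $\lambda^{-p^i}\notin\{\lambda,\lambda^q,\dots,\lambda^{q^n}\}$; for $D_n(q)$ one uses the explicit conjugating matrix in $\mathrm{GO}_{2n}\setminus\mathrm{SO}_{2n}$ together with the shape of $C_{\mathrm{GL}_{2n}}(x)$; for $E_6(q)$ and $F_4(q)$ the paper \emph{reduces to a maximal subgroup} (${\rm SL}_3(q^3).3$ and ${}^3D_4(q).3$ respectively) and applies the already-established cases. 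None of this is visible in your sketch.

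\textbf{Part (ii): wrong torus.} For $B_2(2^f)$ and $G_2(3^f)$ you place $R$ in the $\Phi_4$-torus of order $q^2+1$ (you write ``Zsigmondy prime of $2^{2f}+1$'' and ``$\Phi_{2f}$-torus''). In fact $r$ is a Zsigmondy prime of $q+1$, so $R$ sits in the $\Phi_2$-torus of order $(q+1)^2$; see Lemmas~\ref{graph-centB2} and~\ref{graph-centG2}, where the eigenvalue argument is carried out in exactly that torus. Your explanation of why the graph automorphism is allowed (``the torus of order $q^2+1$ interacts so that an outer element can centralize a Zsigmondy element'') is therefore based on the wrong object, and the actual content of part~(ii) is that \emph{field} automorphisms are excluded via the $\Phi_2$-torus eigenvalue comparison, with no claim made about the graph automorphism.
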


We mention here that the $p$-part of the centralizer of a Sylow $p$-subgroup of a simple group $S$ in the automorphism
group of $S$ for an odd prime $p$ is known based on  \cite[Theorem A]{Gr82}.
However, our situation has to focus on the $p'$-part of the centralizer, in which case inner automorphisms and graph automorphisms
of the simple group cause extra difficulties.

The outline of this paper can be stated as follows.
In Section \ref{sec:prel}, we collect some preliminaries about
connected and finite reductive groups including Sylow $e$-tori, $e$-split Levi subgroups and regular numbers,
principal blocks of finite groups and their normal subgroups, and
Lusztig induction and restriction of characters.


In Section \ref{graphsofsimple}, we determine block graphs of finite simple groups of Lie type
and prove Theorem  \ref{graphofsimple}. In Section \ref{Steinb}, we prove Theorem \ref{St} using a property
of centralizers of Sylow subgroups and values of Steinberg characters.

In Section \ref{centralizers}, we prove Theorem \ref{cent-all} by separately considering simple groups with
or without graph automorphisms.
In Section \ref{blockgalmostsimple}, we apply results in Section \ref{centralizers} to prove Theorem \ref{Almostsimplewithtriangle} stating that
block graphs of almost simple groups have a triangle.
In Section \ref{solvablewithnotri}, we prove Theorem \ref{groupswithnotriangle} using a reduction to almost simple groups.

\section{Preliminaries}\label{sec:prel}


\subsection{Connected and finite reductive groups}\label{sub:confin}

When determining block graphs of finite simple groups of Lie type, a feasible way is to
investigate unipotent characters from Lusztig induction which is compatible with block theory
in a certain sense. Here we collect some terminology, notation and  basic facts from \cite{MT} and
\cite{BMM93} about connected and finite reductive groups that are needed in the sequel.

Let $\mathbf{G}$ be a simple algebraic group over an algebraic closure $\overline{\mathbb{F}}_p$
of characteristic $p>0$ and  $F$ be a Steinberg endomorphism of $\mathbf{G}$ with finite group  of fixed points $\mathbf{G}^F$.
The group $\mathbf{G}^F$ is often called a {\it finite reductive group} or a {\it finite group of Lie type},
and has close relationship with finite simple groups of Lie type.

Specifically, except for the Tits group ${}^2F_4(2)'$, a finite simple group of Lie type defined over a finite field of  characteristic $p$
can be obtained as the quotient $\mathbf{G}^F/Z(\mathbf{G}^F)$ for some simple simply-connected
algebraic group $\mathbf{G}$ over an algebraic closure $\overline{\mathbb{F}}_p$ and a Steinberg endomorphism
$F$ of $\mathbf{G}$. In this way, if we denote by $\pi:\mathbf{G}\rightarrow \mathbf{G}/Z(\mathbf{G})$ the natural
 epimorphism, then $\pi$ commutes with $F$ so that it induces an injection from $S$ to $A_0$,
 where $A_0=(\mathbf{G}/Z(\mathbf{G}))^F$. In particular,
 we have  $S=O^{p'}(A_0)$ and  $A_0$ is exactly the group generated by $S$ and its diagonal automorphisms.

We choose and fix an $F$-stable maximal torus $\mathbf{T}$ of $\mathbf{G}$, so that
the normalizer $N_{\mathbf{G}}(\mathbf{T})$ of $\mathbf{T}$ in $\mathbf{G}$ is also $F$-stable.
Let $X$ be the character group of $\mathbf{T}$ and  $Y$ be the
cocharacter group of $\mathbf{T}$, and write $\Phi\subseteq X$ (resp. $\check{\Phi}\subseteq Y$) for the set of roots (resp. the set of coroots) associated to $\mathbf{G}$.
The quadruple $(X, \Phi, Y, \check{\Phi})$ is called a root datum.
Notice that the Steinberg endomorphism $F$ acts on $V := Y\otimes_{\mathbb{Z}}\mathbb{R}$ as $q\phi$ for some automorphism $\phi$ of $Y$
of finite order.
Let $s_\alpha$ be the reflection associated to  $\alpha\in \Phi$, and let
$s_{\alpha}^{\vee}$ be the adjoint of $s_\alpha$.
The Weyl group $W_{\mathbb{G}}$ of $\Phi$ is the subgroup of $\Aut(Y)$ generated by all $s_{\alpha}^{\vee}$ for $\alpha\in \Phi$.
The quintuple $\mathbb{G}=(X,\Phi,Y,\check{\Phi}, W_{\mathbb{G}}\phi)$ is called the {\it generic\ reductive\ group} associated to the triple $(\mathbf{G},\mathbf{T},F)$, and
the  polynomial
\[|\mathbb{G}|=x^{|\Phi^+|}
\prod_{i=1}^r(x^{d_i}-\epsilon_i) \in \mathbb{Z}[x]\]
is called the {\it generic\ order} of  $\mathbb{G}$,
where $r$ is the rank of $\mathbb{G}$,  $d_1, \dots, d_r$ are the degrees of
$W_{\mathbb{G}}$, and $\epsilon_1, \dots, \epsilon_r$ are the eigenvalues corresponding to the action of $\phi$ on $V$.
For $w\in W_{\mathbb{G}}$,
 a $w\phi$-stable summand $Y'$ of $Y$ and its dual $X'$,
a generic group of the form $\mathbb{T} = (X', \emptyset, Y', \emptyset, (w\phi)|_{Y'})$ is called a {\it generic\ torus}
 of $\mathbb{G}$.

Under the above setting, we now focus on $e$-tori and their centralizers, which are crucial for the proofs of our main results.
For a positive integer $d$, we denote by  $\Phi_d(x)$ the $d$-th cyclotomic polynomial over $\mathbb{Q}$.

\begin{definition}
An $F$-stable torus $\mathbf{S} \leq \mathbf{G}$ is called an {\it $e$-torus} if its generic order equals $|\mathbb{S}| = \Phi_e(x)^a$ for some $a\geq 0$, where $\mathbb{S}$ denotes the complete root datum corresponding to $\mathbf{S}$ and $F$. Moreover, $\mathbf{S}$ is called
a {\it Sylow $e$-torus} of $\mathbf{G}$ if $|\mathbb{S}|$ is the
precise power of $\Phi_e(x)$ dividing the generic order $|\mathbb{G}|$. In particular, $|\mathbf{S}^F| =|\mathbb{S}|(q) = \Phi_e(q)^a$ for some
$p$-power $q$.
An $F$-stable  Levi subgroup $\mathbf{L}\leq \mathbf{G}$ is called  an {\it $e$-split  Levi subgroup} if it is  the centralizer of  an $e$-torus $\mathbf{S}$ in $\mathbf{G}$. If  $\mathbf{S}$ is a  Sylow $e$-torus, then  $\mathbf{L}$ is called a Sylow $e$-split Levi subgroup of $\mathbf{G}$.
\end{definition}

Here we collect several important facts about Sylow $e$-tori and Sylow $e$-split Levi subgroups.

\begin{lemma} \label{Sylow e-tori and e-split Levi subgroup 1}
Let $\mathbf{G}$ be a connected reductive algebraic group  defined over  $\mathbb{F}_q$, with corresponding Frobenius morphism $F:\mathbf{G}\rightarrow \mathbf{G}$.
Let $e$ be a positive integer.
\begin{itemize}
   \item[$(1)$] A $\mathbf{G}^F$-conjugate of a Sylow $e$-torus of $\mathbf{G}$ is also a Sylow $e$-torus of $\mathbf{G}$.
   \item[$(2)$] A $\mathbf{G}^F$-conjugate of a Sylow $e$-split Levi subgroup of $\mathbf{G}$ is also a Sylow  $e$-split Levi subgroup of $\mathbf{G}$.
  \item[$(3)$] Sylow $e$-split Levi subgroups of $\mathbf{G}$ are minimal among $e$-split Levi subgroups of $\mathbf{G}$.
  \item[$(4)$] An $F$-stable sub-torus of an $e$-torus of $\mathbf{G}$ is also an $e$-torus.
  \item[$(5)$] The Sylow $e$-torus $\mathbf{S}$ contained in a Sylow $e$-split Levi subgroup  $\mathbf{L}=C_\mathbf{G}(\mathbf{S})$ of
  $\mathbf{G}$ is the unique Sylow $e$-torus of $\mathbf{G}$ contained in every $F$-stable maximal torus of $\mathbf{L}$.
  \end{itemize}
\end{lemma}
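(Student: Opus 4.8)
The plan is to obtain all five parts from two sources: the elementary dictionary between $F$-stable (sub)tori of $\mathbf{G}$ and $\phi$-stable saturated sublattices of cocharacter groups, and the Sylow $\Phi_e$-torus theory of Broué--Malle--Michel \cite{BMM93}, from which I only need that a Sylow $e$-torus of $\mathbf{G}$ is \emph{maximal} among the $e$-tori of $\mathbf{G}$. Parts $(1)$ and $(2)$ I would settle first: for $g\in\mathbf{G}^F$ the inner automorphism $x\mapsto gxg^{-1}$ of $\mathbf{G}$ commutes with $F$ (as $F(g)=g$), hence sends $F$-stable tori to $F$-stable tori and induces an isomorphism of the associated complete root data intertwining the $F$-actions; therefore ${}^{g}\mathbf{S}$ has the same generic order as $\mathbf{S}$, and the conditions ``$e$-torus'' and ``Sylow $e$-torus'' --- which involve only the generic orders $|\mathbb{S}|$ and $|\mathbb{G}|$ --- are preserved. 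Since ${}^{g}(C_{\mathbf{G}}(\mathbf{S}))=C_{\mathbf{G}}({}^{g}\mathbf{S})$ and a $\mathbf{G}^F$-conjugate of a Levi subgroup is again a Levi subgroup, $(2)$ follows from $(1)$.

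For $(4)$, observe that an $e$-torus $\mathbf{S}$ is a torus, so (having empty root system) its generic order is simply the characteristic polynomial of $\phi$ on $Y(\mathbf{S})$, and by definition $|\mathbb{S}|=\Phi_e(x)^{a}$ for some $a\ge 0$. An $F$-stable subtorus $\mathbf{S}'\le\mathbf{S}$ corresponds to a $\phi$-stable saturated sublattice $Y(\mathbf{S}')\subseteq Y(\mathbf{S})$, and the characteristic polynomial of $\phi|_{Y(\mathbf{S}')}$ is a monic polynomial in $\mathbb{Z}[x]$ dividing $\Phi_e(x)^{a}$; since $\Phi_e(x)$ is irreducible in $\mathbb{Z}[x]$ it equals $\Phi_e(x)^{a'}$ for some $0\le a'\le a$, whence $|\mathbb{S}'|=\Phi_e(x)^{a'}$ and $\mathbf{S}'$ is an $e$-torus. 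The same lattice computation inside an arbitrary $F$-stable torus $\mathbf{T}$ shows (using that $\phi$ has finite order, hence is semisimple) that $\mathbf{T}$ possesses a \emph{unique} maximal $e$-subtorus $\mathbf{T}_{\Phi_e}$, namely the one with $Y(\mathbf{T}_{\Phi_e})=(\ker\Phi_e(\phi))\cap Y(\mathbf{T})$, and that every $e$-subtorus of $\mathbf{T}$ is contained in $\mathbf{T}_{\Phi_e}$. I will use this auxiliary fact in the remaining two parts.

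For $(3)$, let $\mathbf{L}=C_{\mathbf{G}}(\mathbf{S})$ with $\mathbf{S}$ a Sylow $e$-torus of $\mathbf{G}$, and let $\mathbf{M}=C_{\mathbf{G}}(\mathbf{S}')$ be an $e$-split Levi subgroup with $\mathbf{M}\subseteq\mathbf{L}$. Since $\mathbf{S}$ centralizes $\mathbf{L}=C_{\mathbf{G}}(\mathbf{S})$ we have $\mathbf{S}\subseteq Z(\mathbf{L})^{\circ}$, and since $\mathbf{M}\subseteq\mathbf{L}$ are Levi subgroups we have $Z(\mathbf{L})^{\circ}\subseteq Z(\mathbf{M})^{\circ}$ (a standard property of nested Levi subgroups); hence the $e$-torus $\mathbf{S}$ lies in $Z(\mathbf{M})^{\circ}$, so $\mathbf{S}\subseteq\mathbf{S}_0:=(Z(\mathbf{M})^{\circ})_{\Phi_e}$. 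Also $\mathbf{S}'\subseteq Z(\mathbf{M})^{\circ}$ (again since $\mathbf{S}'$ centralizes $\mathbf{M}=C_{\mathbf{G}}(\mathbf{S}')$) is an $e$-torus, so $\mathbf{S}'\subseteq\mathbf{S}_0\subseteq Z(\mathbf{M})^{\circ}$ and therefore $\mathbf{M}=C_{\mathbf{G}}(\mathbf{S}')\supseteq C_{\mathbf{G}}(\mathbf{S}_0)\supseteq C_{\mathbf{G}}(Z(\mathbf{M})^{\circ})\supseteq\mathbf{M}$, i.e. $C_{\mathbf{G}}(\mathbf{S}_0)=\mathbf{M}$. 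Now $\mathbf{S}_0$ is an $e$-torus of $\mathbf{G}$ containing the maximal $e$-torus $\mathbf{S}$, so $\mathbf{S}_0=\mathbf{S}$; hence $\mathbf{M}=C_{\mathbf{G}}(\mathbf{S}_0)=C_{\mathbf{G}}(\mathbf{S})=\mathbf{L}$, which is the minimality assertion. For $(5)$: as above, $\mathbf{S}\subseteq Z(\mathbf{L})^{\circ}$, and $Z(\mathbf{L})^{\circ}$ lies in every $F$-stable maximal torus of $\mathbf{L}$, so $\mathbf{S}$ is such a common subtorus --- this is the existence part. For uniqueness, fix an $F$-stable maximal torus $\mathbf{T}_0$ of $\mathbf{L}$ (one exists since $\mathbf{L}$ is a connected reductive $F$-group, \cite{MT}); then $Z(\mathbf{L})^{\circ}\subseteq\mathbf{T}_0$, so both $\mathbf{S}$ and any Sylow $e$-torus $\mathbf{S}_1$ of $\mathbf{G}$ contained in every $F$-stable maximal torus of $\mathbf{L}$ lie in $\mathbf{T}_0$, hence in $(\mathbf{T}_0)_{\Phi_e}$. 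But $(\mathbf{T}_0)_{\Phi_e}$ is an $e$-torus of $\mathbf{G}$ containing the maximal $e$-torus $\mathbf{S}$ (respectively $\mathbf{S}_1$), so it coincides with $\mathbf{S}$ (respectively with $\mathbf{S}_1$); therefore $\mathbf{S}_1=\mathbf{S}$.

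The only step that is not routine cocharacter-lattice bookkeeping is the input, used in $(3)$ and $(5)$, that a Sylow $e$-torus of $\mathbf{G}$ is maximal among $e$-tori of $\mathbf{G}$ --- equivalently, that the power of $\Phi_e(x)$ dividing the generic order of any $F$-stable maximal torus of $\mathbf{G}$ never exceeds the power dividing $|\mathbb{G}|$. This is the substance of the Sylow $\Phi_e$-theory (ultimately Springer's theory of $\zeta$-regular elements), and I would cite \cite{BMM93} for it rather than reprove it; I expect it to be the main obstacle were one to seek a fully self-contained argument. The remaining care-points are minor --- the nested-Levi inclusion $Z(\mathbf{L})^{\circ}\subseteq Z(\mathbf{M})^{\circ}$ in $(3)$, and the short chain of inclusions verifying $C_{\mathbf{G}}(\mathbf{S}_0)=\mathbf{M}$ there.
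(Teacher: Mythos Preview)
Your argument is correct. The paper's own proof is considerably terser: parts $(1)$--$(3)$ are dismissed as ``directly following from their definitions,'' and parts $(4)$ and $(5)$ are handled by citing the polynomial-order machinery of \cite[Theorem 13.1, Proposition 13.2(ii), Proposition 13.5]{CE}. Your treatment unpacks those citations into the underlying cocharacter-lattice computations --- the divisibility of characteristic polynomials for $(4)$, and the existence of the unique maximal $e$-subtorus $\mathbf{T}_{\Phi_e}$ for $(5)$ --- so the content is essentially the same. The one place you genuinely do more than the paper is part $(3)$: the paper's claim that minimality follows ``from the definition'' is arguably too quick, since a Sylow $e$-torus is defined via its generic order and \emph{not} via maximality among $e$-tori, and it is precisely this maximality (your cited input from \cite{BMM93}) that drives your argument via $\mathbf{S}\subseteq\mathbf{S}_0=(Z(\mathbf{M})^\circ)_{\Phi_e}$ and then $\mathbf{S}_0=\mathbf{S}$. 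So your version is more self-contained and explicit about where the substantive input lies; the paper's is shorter only because it delegates to \cite{CE}.
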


\begin{proof}  Statements $(1)-(3)$ directly follow from their definitions.

For $(4)$, let $\mathbf{S}$ be an $e$-torus of $\mathbf{G}$ and $\mathbf{S}_1\subseteq \mathbf{S}$ an $F$-stable torus.
By \cite[Theorem 13.1]{CE}, there is a unique ``polynomial order" $P_{(\mathbf{S},F)}(x)\in \mathbb{Z}[x]$,
which is indeed $\Phi_e(x)^a$ for some $a\geq 0$, associated to $\mathbf{S}$ such that $|\mathbf{S}^F|=P_{(\mathbf{S},F)}(q)$.
Furthermore, we have $P_{(\mathbf{S}_1, F)}(x)\mid P_{(\mathbf{S},F)}(x)$ by \cite[Proposition 13.2.(ii)]{CE}.
However, $\Phi_e(x)$ is irreducible in $\mathbb{Z}[x]$.
Hence $P_{(\mathbf{S}_1, F)}(x)=\Phi_e(x)^{a_1}$ for some $a_1\leq a$, which says that $\mathbf{S}_1$ is an $e$-torus.

We now prove $(5)$. Let $\mathbf{S}$ be a Sylow $e$-torus such that $\mathbf{L}=C_\mathbf{G}(\mathbf{S})$ and
$|\mathbf{S}^F|=\Phi_e(x)^a$ for some $a\geq 0$. Let $\mathbf{T}$ be an $F$-stable maximal torus of $\mathbf{L}$.
Since $\mathbf{T}$ is self-centralizing in $\mathbf{L}$, and $[\mathbf{S},\mathbf{T}]=1$, it follows that
 $\mathbf{S}\subseteq \mathbf{T}$. By \cite[Proposition 13.5]{CE},
$\mathbf{S}$ is the unique Sylow $e$-torus in $\mathbf{T}$.
\end{proof}

Recall that a maximal torus of $\mathbf{G}^F$ is $\mathbf{T}^F$ for some maximal torus $\mathbf{T}$ of $\mathbf{G}$.

\begin{definition} We call
$\mathbf{S}^F$ a {\it Sylow $e$-torus} of $\mathbf{G}^F$ if
$\mathbf{S}$ is a Sylow $e$-torus of  $\mathbf{G}$, and $\mathbf{L}^F$ a Sylow $e$-split Levi subgroup of $\mathbf{G}^F$
if $\mathbf{L}$ is a Sylow $e$-split Levi subgroup of $\mathbf{G}$.
\end{definition}

Finally, we introduce regular numbers of a finite simple group of Lie type in terms of
its following standard construction.

\begin{definition} \label{F-g setup}
Let $S$ be a finite simple group of Lie type defined over a finite field $\mathbb{F}_q$ of characteristic $p$.
We call the following set-up $(\mathbf{G}, F)$ the $F_\gamma$-set-up for $S$ if $\fG$ is a simple simply-connected
algebraic group as before, and $F$ is chosen as follows.
For $\alpha \in \Phi$, we write $x_{\alpha}(t)$ for the root element associated to $\alpha$ and $t \in \Fpbar$.
There exists an automorphism $\Gamma$ of $\fG$ such that

\begin{itemize}
\item if $\fG$ is not one of $\rB_2(\Ftwobar)$, $\rF_4(\Ftwobar)$ and
$\rG_2(\Fthreebar)$, then $\Gamma(x_\alpha(t))=x_{\gamma(\alpha)}(t)$ for every
$\alpha \in \Phi$ and $t \in \Fpbar$, where
$\gamma$ is an automorphism of the Dynkin
diagram of $\fG$ chosen as in \cite[Table 1]{BGL77}; and
\item if $\fG$ is one of $\rB_2(\Ftwobar)$, $\rF_4(\Ftwobar)$ or
$\rG_2(\Fthreebar)$, then for each $\alpha \in \Phi$ and $t \in \Fpbar$ we have
\[\Gamma(x_{\alpha}(t))=
\begin{cases}
x_{\bar{\gamma}(\alpha)}(t^p) & \text{ if } \alpha \text{ is short,}\\
x_{\bar{\gamma}(\alpha)}(t) & \text{ otherwise,}
\end{cases}
\]
 where $\bar{\gamma}$ is a non-trivial automorphism of the Coxeter diagram associated to $\fG$.
\end{itemize}
If we denote by $F_0$ the endomorphism of $\mathbf{G}$ defined by extending $F_0(x_\alpha(t))=x_{\alpha}(t^q)$ for
every $\alpha \in \Phi$ and $t \in \Fpbar$, then the above Steinberg endomorphism $F$ for $S$ is chosen to be
$F_0 \circ \Gamma$. 
\end{definition}

As in Theorem \ref{St}, write
\[e=e_{\ell}(q):={\rm multiplicative\ order\ of}\ q\
          \left\{\begin{array}{l}
                        {\rm modulo\ } \ell {\rm\  if\ } \ell {\rm\ is\ odd,} \\
                       {\rm modulo\ } 4 {\rm \ if\ } \ell=2.
                         \end{array}
                     \right.
\]
The number $e$ is called a  regular number of $(\mathbf{G}, F)$
if Sylow $e$-Levi subgroups of $\mathbf{G}$ are tori. As observed in
\cite[Remark 2.6]{Sp10}, such numbers are exactly the regular numbers of $W_{\mathbb{G}}\phi$ in
the sense of Springer \cite{Spr}.

\begin{definition} \label{regular}
We call $e$ a regular number of the simple group $S$ of Lie type if
 $e$ is a regular number of the $F_\gamma$-set-up $(\mathbf{G},F)$ for $S$.
\end{definition}

\subsection{Sylow subgroups of simple groups of Lie type} \label{Sylow}

Recall that if $p$ is a prime and $t, n$ are integers greater than $1$, then $p$ is  a
Zsigmondy prime divisor of $t^n -1$ if $p$ divides $t^n - 1$, but $p$ does not divide $t^m - 1$ for
$0 < m < n$. Clearly,  $p$ is a Zsigmondy prime divisor of $t^n -1$  if and only if $p \mid \Phi_n(t)$, but $p$
does not divide $\Phi_m(t)$ for $0 < m < n$. The well-known Zsigmondy's Theorem
asserts that for any $n,t > 1$,  $t^n - 1$ has a Zsigmondy prime divisor unless $(n,t) = (6,2)$ or $n = 2$ and $t$ has
the form $2^e - 1$ for some integer $e$ (see \cite{Zsi}).

Simple groups of Lie type are usually partitioned into 16 families, and their orders can be found in \cite[Table 2.2]{GLS3}. Let $S$ be a finite simple group of Lie type defined over a finite field $\mathbb{F}_q$, where $q=p^f$.  Hence $S=O^{p'}(A_0)$,
where $A_0$ is the group of fixed points of a
 simple algebraic group of adjoint type by  a Steinberg endomorphism.
 For the purpose of convenience,
Table \ref{data} collects some related data about $S$. The number $d$ is
the index $|A_0:S|$, and  $T$ is a cyclic subgroup of $S$ which corresponds to
a maximal torus of $A_0$. The order of each  $T$ in Table \ref{data} is as in  \cite[Tables 6 and 7]{GM12}.
The group $T_e$ is a Sylow $\Phi_e$-subgroup of $T$, with  $e$ a regular number of $S$,
as listed in \cite[Tables 1 and 2]{Sp09} and \cite[Table 1]{Sp10}.

Assume that $|T_e|$ has a Zsigmondy prime $r$. Then $S$ has a Sylow $r$-subgroup, say $R$, contained in $T_e$.
In Table \ref{data}, we collect some useful data to us, where the order of $N_{A_0}(R)/C_{A_0}(R)$ follows from
\cite[Lemma 2.7]{DR14},  \cite[Tables 6 and 7]{GM12} and the regularity of $e$.

\setlength{\rotFPtop}{0pt plus 1fil}
\begin{sidewaystable}

\caption{Data related to $\S$\ref{Sylow}}  \label{data}
\begin{tabular}{llllllll}  \hline
      $S$   &   Conditions      &  $d$        &  $|T|$                     &  $|T_e|$          & $e$    & $|N_{A_0}(R)/C_{A_0}(R)|$  &${\rm ord}_r(p)$ \\ \hline
  $A_n(q)$  & $n\geq 1, (n,q)\neq(2,2), (2,4), (5,2)$     &  $(2,q-1)$     &   $\frac{q^{n+1}-1}{q-1}$  &  $\Phi_{n+1}$     & $n+1$   & $n+1$                     & $(n+1)f$     \\
  $B_n(q)$ & $n>1$, $(n,q)\neq (3,2)$ {\rm and\ } $2\nmid q$ {\rm if\ } $n=2$   &  $(2,q-1)$     &   $q^n+1$                  & $\Phi_{2n}$        & $2n$    & $2n$           & $2nf$      \\
  $C_n(q)$ & $n>2$ {\rm and\ } $(n,q)\neq (3,2)$  &  $(2,q-1)$     &   $q^n+1$                  & $\Phi_{2n}$       & $2n$      & $2n$                    & $2nf$          \\
  $D_n(q)$ & $n>3$, $(n,q)\neq (6,2)$ &  $(4,q^n-1)$    &  $q^{n}-1$    & $\Phi_{n}$  & $n$             & $n$     & $nf$  \\
  $E_6(q)$ &        &  $(3,q-1)$     &   $\Phi_9/(3,q-1)$         & $\Phi_9/(3,q-1)$ &$9$     &$9$                        & $9f$           \\
  $E_7(q)$  &       &  $(2,q-1)$     &   $\Phi_2\Phi_{18}/(2,q-1)$ & $\Phi_{18}$       & $18$     & $18$                    & $18f$       \\
  $E_8(q)$  &       &  $1$           &   $\Phi_{30}$               &   $\Phi_{30}$     & $30$     & $30$                    & $30f$           \\
  $F_4(q)$ & $q\neq 2$       &  $1$           &   $\Phi_{12}$               &   $\Phi_{12}$    & $12$      & $12$                    & $12f$         \\
  $G_2(q)$ & $3\nmid q, q>2$      &  $1$   &   $\Phi_6$                    &   $\Phi_6$     & $6$       & $6$                     & $6f$       \\
  ${}^2A_n(q)$ & $n>1$ {\rm even\ and\ } $(n,q)\neq (2,2)$
                   &  $(n+1,q+1)$   & $\frac{q^{n+1}+1}{q+1}$       &  $\Phi_{2(n+1)}$ & $2(n+1)$ & $n+1$                   & $2(n+1)f$    \\
               & $n>1$ {\rm odd\ and\ } $(n,q)\neq (3,2)$
                   &  $(n+1,q+1)$   &  $q^n+1$                   & $\Phi_{2n}$   & $2n$          & $n$                     & $2nf$     \\
  ${}^2D_n(q)$ & $n>3$
                   &  $(4,q^n+1)$   &   $q^n+1$                  & $\Phi_{2n}$    & $2n$         & $n$                     & $2nf$    \\
${}^2E_6(q)$ &    &  $(3,q+1)$     &   $\Phi_{18}/(3,q+1)$      &  $\Phi_{18}/(3,q+1)$ & $18$   & $18$                     & $18f$   \\
${}^3D_4(q)$  &   &  $1$           &   $\Phi_{12}$             &   $\Phi_{12}$         & $12$     & $12$                   & $12f$   \\
${}^2B_2(2^{2m+1})$  & $q=2^{2m+1}, m\geq 1$  &  $1$            &   $\Phi_{4}'$           &   $\Phi_{4}'$           & $4$   & $4$                     & $4f$     \\
${}^2F_4(2^{2m+1})$&  $q=2^{2m+1}, m\geq 1$  &  $1$           &   $\Phi_{12}'$          &   $\Phi_{12}'$            & $12$    & $12$                  & $12f$    \\
${}^2G_2(3^{2m+1})$& $q=3^{2m+1}, m\geq 1$ & $1$           &   $\Phi_{6}'$            &   $\Phi_{6}'$          & $6$      & $6$                  & $6f$    \\ \hline
 \end{tabular} \\
 (Here,  $\Phi_{4}'=q+\sqrt{2q}+1$, $\Phi_{12}'=q^2+\sqrt{2q^2}+q+\sqrt{2q}+1$, and $\Phi_{6}'=q-\sqrt{3q}+1$.
  We assume $r$ to be a Zsigmondy prime of $|T_e|$.)
\end{sidewaystable}

\subsection{Characters and blocks} \label{classicalresults}

In this subsection we make some observations on principal blocks of a finite group and its normal subgroups.
Our notation of character theory of finite groups mainly follows \cite{Is} and \cite{Nav}.
In particular, if $H$ is a subgroup of $G$, then
$\chi_H$ denotes the restriction of a character $\chi$ of $G$ to $H$ and $\theta^G$ means the induction of a character $\theta$ of $H$ to $G$.
In addition, if $g\in G$, then $H^g=g^{-1}Hg$ and ${}^gH=gHg^{-1}$.
Sometimes we use $g^\phi$ to denote $\phi(g)$ for $\phi\in {\rm Aut}(G)$.

An important tool that we shall make use of is when the principal $p$-block of a normal subgroup is uniquely covered by the principal $p$-block of the top group.

\begin{lemma}\label{PrinUniqCovLiftEdge}
Let $N\unlhd  G$ and suppose that $|N|$ is divisible by a prime $p$ such that the principal $p$-block of $G$ is the unique $p$-block
 of $G$ covering the principal $p$-block of $N$.
If for any other prime $r$ dividing $|N|$, the primes $p$ and $r$ are adjacent in $\Gamma_B(N)$, then $p$ and $r$ are adjacent in $\Gamma_B(G)$.
\begin{proof}
As $B_0(G)_p$ is the unique block of $G$ covering $B_0(N)_p$, it follows from \cite[Theorem 9.4]{Nav} that for any $\phi\in B_0(N)_p$ all the irreducible constituents of $\phi^G$ lie in $B_0(G)_p$.
As $r$ is adjacent to $p$ in $\Gamma_B(N)$, there exists some non-trivial irreducible character $\phi\in B_0(N)_p\cap B_0(N)_r$.
Furthermore, as $B_0(G)_r$ covers $B_0(N)_r$, there is at least one irreducible constituent of $\phi^G$ (which must be non-trivial) lying in
$B_0(G)_r$.
By the above observation for $B_0(G)_p$, it follows that this constituent lies in $B_0(G)_p\cap B_0(G)_r$.
Thus $p$ and $r$ are adjacent in $\Gamma_B(G)$.
\end{proof}
\end{lemma}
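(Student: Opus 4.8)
The plan is to exploit the hypothesis that $B_0(G)_p$ is the \emph{unique} $p$-block of $G$ covering $B_0(N)_p$, which by \cite[Theorem 9.4]{Nav} forces every irreducible constituent of $\phi^G$ into $B_0(G)_p$ whenever $\phi \in \Irr(B_0(N)_p)$. This is the key asymmetry that makes the argument work: for the prime $p$ we get a clean "upward" statement about induction, whereas for $r$ we only have the generic fact that $B_0(G)_r$ covers $B_0(N)_r$.

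First I would fix, using the hypothesis that $p$ and $r$ are adjacent in $\Gamma_B(N)$, a nontrivial $\phi \in \Irr(B_0(N)_p) \cap \Irr(B_0(N)_r)$. Next I would induce $\phi$ to $G$ and analyze the constituents of $\phi^G$ from two sides. On the $p$-side: since $\phi \in \Irr(B_0(N)_p)$ and $B_0(G)_p$ is the unique block covering $B_0(N)_p$, \emph{all} constituents of $\phi^G$ lie in $B_0(G)_p$. On the $r$-side: since $\phi \in \Irr(B_0(N)_r)$, the block of $G$ containing any constituent of $\phi^G$ covers $B_0(N)_r$ (by general covering theory — a block of $N$ is covered by the block of $G$ containing any constituent of the induction of a character in it); in particular there is at least one constituent whose $r$-block covers $B_0(N)_r$. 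To land that constituent in $B_0(G)_r$ specifically, I would invoke that $B_0(G)_r$ covers $B_0(N)_r$ together with the Fong–Reynolds / covering structure; more cleanly, one notes that $1_N$ is a constituent of $\phi^G$ restricted back only in trivial situations, so instead I would argue: the trivial character $1_G \in \Irr(B_0(G)_r)$ lies over $1_N$, and since $\phi \neq 1_N$ the constituents of $\phi^G$ are nontrivial; among the blocks of $G$ covering $B_0(N)_r$, at least one contains a constituent of $\phi^G$, and because $B_0(G)_r$ is the principal block it covers $B_0(N)_r$ — here one uses that constituents of $\phi^G$ exhaust the characters of $G$ lying over $\phi$ and that $B_0(G)_r$, covering $B_0(N)_r$, must contain one of them. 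I would want to double-check this last point: the right statement is that if $\phi$ lies in a block $b$ of $N$ then the blocks of $G$ covering $b$ are exactly those containing a constituent of $\phi^G$, so since $B_0(G)_r$ covers $B_0(N)_r \ni \phi$, some constituent $\chi$ of $\phi^G$ lies in $B_0(G)_r$.

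Finally I would combine the two conclusions: this particular $\chi$ is a constituent of $\phi^G$, hence lies in $B_0(G)_p$ by the uniqueness argument, and it lies in $B_0(G)_r$ by the covering argument; and $\chi \neq 1_G$ since $\phi \neq 1_N$. Therefore $\Irr(B_0(G)_p) \cap \Irr(B_0(G)_r) \neq \{1_G\}$, i.e.\ $p$ and $r$ are adjacent in $\Gamma_B(G)$, as required.

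The main obstacle is the $r$-side step: making precise that some constituent of $\phi^G$ actually lands in the \emph{principal} $r$-block of $G$ rather than merely in \emph{some} block covering $B_0(N)_r$. The clean way around it is the standard fact (a consequence of Clifford theory of blocks, e.g.\ \cite[Chapter 9]{Nav}) that the set of blocks of $G$ covering a block $b$ of $N$ coincides with the set of blocks of $G$ containing an irreducible constituent of $\psi^G$ for $\psi \in \Irr(b)$; applied with $b = B_0(N)_r$ and using that $B_0(G)_r$ covers $B_0(N)_r$, this immediately yields the desired constituent. Everything else is bookkeeping with the nontriviality of $\phi$.
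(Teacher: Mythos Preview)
Your proposal is correct and follows essentially the same route as the paper: pick a nontrivial $\phi \in \Irr(B_0(N)_p)\cap\Irr(B_0(N)_r)$, use uniqueness on the $p$-side to force every constituent of $\phi^G$ into $B_0(G)_p$, and use \cite[Theorem 9.4]{Nav} on the $r$-side (covering $\Leftrightarrow$ containing a constituent of the induction) to find a constituent $\chi$ of $\phi^G$ in $B_0(G)_r$. Your self-correction on the $r$-side step lands exactly on the right justification, and the nontriviality of $\chi$ follows from Frobenius reciprocity as you note.
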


Thus we often want to find primes for which we can produce a unique cover for the principal block of a normal subgroup.
The following two results provide criteria to
ensure that the principal block of a normal subgroup is uniquely covered by the principal block of the top group.

\begin{lemma}\label{NormSylCentUniLiftPrin}
Let $N$ be a normal subgroup of $G$ and $p$ a prime.
If $C_G(P)\leq N$ for $P\in {\rm Syl}_p(N)$ then
$\BN_p$ is  covered by a unique $p$-block of $G$ which must be $\BG_p$.
\begin{proof} This follows from \cite[Corollary 2]{HK85} and the fact that   $\BG_p$ covers $\BN_p$.
\end{proof}
\end{lemma}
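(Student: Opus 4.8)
The statement to be proved is Lemma \ref{NormSylCentUniLiftPrin}: if $N \unlhd G$, $p$ is a prime, and $C_G(P) \leq N$ for some $P \in \mathrm{Syl}_p(N)$, then $B_0(N)_p$ is covered by a unique $p$-block of $G$, namely $B_0(G)_p$.

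The plan is to invoke the structural criterion for when a block of a normal subgroup is covered by a unique block of the whole group, which in this setting is \cite[Corollary 2]{HK85}. First I would recall that a block $b$ of $N$ is covered by a unique block of $G$ precisely when the stabilizer situation is as restrictive as possible; the cited corollary of Harris--Knörr gives that if the defect group $D$ of $b$ satisfies $C_G(D) \leq N$ (or more precisely $G = N C_G(D)$ fails only in the controlled way the corollary handles), then $b$ is covered by a unique block of $G$. For the principal block $B_0(N)_p$, a defect group is a full Sylow $p$-subgroup $P$ of $N$, so the hypothesis $C_G(P) \leq N$ is exactly the input needed. Thus $B_0(N)_p$ is covered by a unique $p$-block of $G$.

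It then remains to identify that unique covering block as the principal block $B_0(G)_p$. For this I would use the elementary and standard fact — recorded in the excerpt's surrounding discussion — that the principal block of $G$ always covers the principal block of any normal subgroup $N$ (since $1_N$ lies in $B_0(N)_p$ and $1_G \in B_0(G)_p$ restricts to $1_N$; equivalently, $B_0(G/N)_p$ considerations, or simply that $1_G$ lies over $1_N$). Combining this with the uniqueness just established forces the unique covering block to be $B_0(G)_p$, completing the proof.

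This lemma is short and essentially a direct citation plus a one-line observation, so there is no serious obstacle; the only point requiring a little care is to state precisely which form of the Harris--Knörr result (\cite[Corollary 2]{HK85}) is being used and to check its hypotheses match — specifically that "defect group of the principal block $=$ Sylow $p$-subgroup of $N$" and that $C_G(P) \leq N$ is the exact condition appearing there. Once that citation is pinned down, the argument is immediate.

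\begin{proof} A defect group of the principal $p$-block $\BN_p$ is a Sylow $p$-subgroup $P$ of $N$. By \cite[Corollary 2]{HK85}, the hypothesis $C_G(P)\leq N$ implies that $\BN_p$ is covered by a unique $p$-block of $G$. Since $1_G\in\IBG_p$ lies over $1_N\in{\rm Irr}(\BN_p)$, the principal block $\BG_p$ covers $\BN_p$; hence $\BG_p$ is the unique such block.
\end{proof}
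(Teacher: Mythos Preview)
Your proof is correct and follows essentially the same approach as the paper: invoke \cite[Corollary 2]{HK85} (using that a defect group of $\BN_p$ is a Sylow $p$-subgroup of $N$, so the hypothesis $C_G(P)\leq N$ applies) to get uniqueness, and then identify the unique covering block as $\BG_p$ via the standard fact that the principal block of $G$ always covers the principal block of $N$. The paper's proof is the same, only more tersely stated.
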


\begin{lemma}\label{IndIrrUniCovBl}
Let $N\unlhd G$ and $\theta\in {\rm Irr}(N)$ be such that $\theta^G\in {\rm Irr}(G)$.
If $b$  is the  $p$-block of $N$ containing $\theta$, then $b$ is covered by a unique $p$-block of $G$.

In particular, if  $\theta\in \BN_p$  such that $\theta^G\in {\rm Irr}(G)$, then
 $\BN_p$ is covered by a unique $p$-block of $G$ which must be $\BG_p$.

\begin{proof}
Let $B$ be a $p$-block of $G$ covering $b$.
By \cite[Theorem 9.4]{Nav} there exists an irreducible character $\chi$ of $B$ such that $\langle \chi,\theta^G\rangle\ne 0$.
Thus as both characters are irreducible, we have $\chi=\theta^G$ and so the block $B$ must be unique.
\end{proof}
\end{lemma}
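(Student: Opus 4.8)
The plan is to use the standard correspondence between blocks of $G$ covering a fixed block $b$ of $N$ and the constituents of induced characters, as encoded in \cite[Theorem 9.4]{Nav}. First I would let $B$ be an arbitrary $p$-block of $G$ covering $b$. By \cite[Theorem 9.4]{Nav}, there is an irreducible character $\chi \in {\rm Irr}(B)$ lying over $\theta$, i.e.\ with $\langle \chi_N, \theta \rangle \ne 0$; equivalently, by Frobenius reciprocity, $\langle \chi, \theta^G \rangle \ne 0$. The hypothesis $\theta^G \in {\rm Irr}(G)$ forces $\chi = \theta^G$. Hence every $p$-block of $G$ covering $b$ contains the single irreducible character $\theta^G$, so there is only one such block, namely the block $B$ with $\theta^G \in {\rm Irr}(B)$.

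For the ``in particular'' statement, take $b = \BN_p$ and note $1_N \in \BN_p$. Since $B_0(G)_p$ always covers $B_0(N)_p$ (the principal block of a group always covers the principal block of any normal subgroup), the uniqueness just proved shows $\BN_p$ is covered only by $\BG_p$.

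The argument is short and self-contained; there is no real obstacle here beyond correctly invoking \cite[Theorem 9.4]{Nav} and observing that an irreducible induced character has a unique irreducible constituent (itself). The only subtlety worth a sentence is recording why $\BG_p$ covers $\BN_p$ in the first place, which is a routine consequence of Brauer's characterization of the principal block (or of the fact that $1_N$ extends to $1_G$).
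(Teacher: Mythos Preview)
Your proposal is correct and follows essentially the same argument as the paper: both let $B$ be an arbitrary block covering $b$, invoke \cite[Theorem 9.4]{Nav} to obtain $\chi\in\Irr(B)$ with $\langle\chi,\theta^G\rangle\ne 0$, and conclude $\chi=\theta^G$ from irreducibility. Your write-up is slightly more explicit (mentioning Frobenius reciprocity and justifying the ``in particular'' clause), but the approach is identical.
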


\subsection{Lusztig induction}\label{sub:charteo}

Let $\mathbf{G}$ be a connected reductive algebraic group defined over $\overline{\mathbb{F}}_p$
and $F$ a Steinberg endomorphism of $\mathbf{G}$.
Let $e\geq 1$, and let $\mathbf{L}$ be  an $e$-split Levi subgroup of $\mathbf{G}$.
An irreducible character $\lambda$ of $\mathbf{L}^F$ is called $e$-cuspidal if ${}^*R_{\mathbf{L}_1\leq \mathbf{Q}}^\mathbf{L}(\lambda)=0$ for
all proper $e$-split Levi subgroups $\mathbf{L}_1 \lneq \mathbf{L}$ and any parabolic subgroup $\mathbf{Q}$ of $\mathbf{L}$
containing $\mathbf{L}_1$ as Levi complement.
The pair $(\mathbf{L}, \lambda)$ is called an  $e$-cuspidal pair if
$\mathbf{L}$ is an $e$-split Levi subgroup and $\lambda\in {\rm Irr}(\mathbf{L}^F)$ is  $e$-cuspidal.

If in addition $\mathbf{L}$ is a Sylow $e$-split Levi subgroup, then all of its characters are $e$-cuspidal and we call
$(\mathbf{L}, \lambda)$  a Sylow $e$-cuspidal pair;
such pairs are $e$-Jordan-cuspidal pairs (see \cite[Remark 2.2]{KM15}). In particular, if $\mathbf{L}$ is a Sylow $e$-split Levi subgroup, then $(\mathbf{L}, 1_{\mathbf{L}^F})$ is always  an $e$-Jordan-cuspidal pair, where $1_{\mathbf{L}^F}$
is the trivial character of $\mathbf{L}^F$.
According to \cite[Theorem A]{KM15}, the irreducible constituents of
 $R_\mathbf{L}^\mathbf{G}(1_{\mathbf{L}^F})$ are compatible with the block distribution of irreducible characters of finite reductive groups.

In the following, we recall Lusztig induction and restriction as well as some of their properties.
Let $\mathbf{L}\leq \mathbf{G}$ be an $F$-stable Levi subgroup of $\mathbf{G}$ contained in a
parabolic subgroup $\mathbf{P}$, and let $\theta\in {\rm Irr}(\mathbf{L}^F)$.
The {\it Lusztig induction} $R_{\mathbf{L}\subseteq \mathbf{P}}^{\mathbf{G}}(\theta)$ of
$\theta$ is the virtual character of $\mathbf{G}^F$ afforded by a virtual $\mathbf{G}^F$-module-$\mathbf{L}^F$
in terms of $\ell$-adic cohomology as in \cite[Definition 11.1]{DM}.
As usual, we omit $\mathbf{P}$ from the notation when there is no need to emphasize it.
When $\mathbf{L}$ is a maximal torus $\mathbf{T}$ of $\mathbf{G}$, the irreducible  constituents of
$R_{\mathbf{T}}^{\mathbf{G}}(1_{\mathbf{T}^F})$ are called {\it unipotent characters}.

The adjoint functor of $R_{\mathbf{L}\subseteq \mathbf{P}}^{\mathbf{G}}$ is called {\it Lusztig restriction} and denoted by
${}^*R_{\mathbf{L}\subseteq \mathbf{P}}^{\mathbf{G}}$. Also, we usually omit $\mathbf{P}$ if possible.
Two important basic facts for us are the following:

\begin{itemize}
  \item[$-$] ${}^*R_\mathbf{L}^\mathbf{G} (1_{\mathbf{G}^F})=1_{\mathbf{L}^F}$, which is from
the proof of \cite[Corollary 12.7]{DM}, and
  \item[$-$] ${}^*R_\mathbf{L}^\mathbf{G} (St_{\mathbf{G}^F})=\pm St_{\mathbf{L}^F}$ (cf. \cite[Corollary 12.18(ii)]{DM}),
   where $St_{\mathbf{G}^F}$ is  the Steinberg character of $\mathbf{G}^F$.
\end{itemize}

Note that both Lusztig induction and restriction have the property of transitivity.
For more details, we refer to \cite{DM}.

\section{Block graphs of simple groups} \label{graphsofsimple}

In this section we investigate block graphs of finite nonabelian simple groups and prove
Theorem  \ref{graphofsimple}. For alternating groups (except ${\rm Alt}(6)$), sporadic simple groups and the Tits group,
we also mention the block graphs of their automorphism groups for later use.

\begin{proposition}\label{AltCase}
Let $G$ be either an alternating group or a symmetric group on $n\geq 4$ symbols.
Then $\Gamma_B(G)$ is a complete graph.
\begin{proof}
The case $G={\rm Sym}(n)$ is given by \cite[Proposition 2.1]{BMO06},
while the case $G={\rm Alt}(n)$ is given by \cite[Proposition 3.2]{BZ08}.
\end{proof}
\end{proposition}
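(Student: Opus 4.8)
The statement to prove is Proposition~\ref{AltCase}: for $G$ an alternating or symmetric group on $n \geq 4$ symbols, the block graph $\Gamma_B(G)$ is complete. Since the authors immediately cite prior literature, the "proof" is essentially a bookkeeping argument invoking known results, and my plan reflects that.

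\medskip

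The plan is to reduce the claim to two citations and a short compatibility check. First I would recall what completeness of $\Gamma_B(G)$ means: for every pair of distinct primes $p \neq q$ dividing $|G|$, there must be a nontrivial $\chi \in \Irr(G)$ lying simultaneously in the principal $p$-block and the principal $q$-block. For $G = \mathrm{Sym}(n)$, this is exactly the content of \cite[Proposition 2.1]{BMO06}, which in the language of that paper says that the symmetric groups are \emph{not} block separable — indeed one exhibits a single irreducible character (for instance, one can take a hook-partition character or, more robustly, rely on the combinatorial description of principal blocks of $\mathrm{Sym}(n)$ via the core/weight formalism and the fact that the trivial and the sign-twist of suitable characters simultaneously satisfy all the relevant congruence conditions) belonging to $B_0(\mathrm{Sym}(n))_p$ for every prime $p \mid n!$. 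Quoting the result directly suffices.

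\medskip

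For $G = \mathrm{Alt}(n)$, the analogous statement is \cite[Proposition 3.2]{BZ08}, so again I would simply cite it. The only point that warrants a sentence of care is the small-degree boundary: the hypothesis $n \geq 4$ must be enough for both cited results to apply, and one should note that $\mathrm{Alt}(6)$ is not an exception here even though it is singled out elsewhere in the paper (the paper's later remark that it treats $\mathrm{Alt}(6)$ separately concerns its \emph{automorphism group}, not $\mathrm{Alt}(6)$ itself). For $n = 4$ one could also just observe directly that $|\mathrm{Alt}(4)| = 12$ has prime divisors $2, 3$ and that the two nontrivial linear characters of $\mathrm{Alt}(4)$ lie in the principal $2$-block (they are the characters with $3$-defect zero complement behavior) and in the principal $3$-block, giving the single needed edge; similarly $\mathrm{Sym}(4)$ has primes $2,3$ and one checks a suitable degree-$2$ or linear character does the job. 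These hand checks are optional sanity tests, not needed once the cited propositions are invoked.

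\medskip

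There is essentially no obstacle here: the proposition is a direct consequence of results in \cite{BMO06} and \cite{BZ08}, and the authors state it mainly so that it can be combined, via Lemma~\ref{PrinUniqCovLiftEdge} and the results on simple groups of Lie type, into the proof of Theorem~\ref{graphofsimple}. If I were forced to reprove it from scratch, the work would lie in the symmetric-group case: one uses the Nakayama conjecture to describe the principal $p$-block of $\mathrm{Sym}(n)$ as the set of partitions whose $p$-core has the same size as the $p$-core of $(n)$ (equivalently, partitions with empty... no — with $p$-core of the appropriate size and maximal weight), and then one must produce a partition $\lambda \neq (n)$ whose $p$-core is the $p$-core of $(n)$ for \emph{every} prime $p \mid n!$ simultaneously, or rather produce, for each pair $p,q$, a common character. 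The standard trick is that the characters labelled by "hook" partitions or by $(n-1,1)$ frequently lie in principal blocks, and that for large $n$ there is enough room to find a uniform witness; the $\mathrm{Alt}(n)$ case then follows by Clifford theory, analyzing when an $\mathrm{Sym}(n)$-character splits on restriction and checking that at least one constituent remains in both principal blocks of $\mathrm{Alt}(n)$. But since the excerpt permits assuming earlier-stated results and the cited propositions are external, the clean proof is the two-line citation given above.
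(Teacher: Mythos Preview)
Your proposal is correct and matches the paper's proof exactly: the paper simply cites \cite[Proposition 2.1]{BMO06} for $\mathrm{Sym}(n)$ and \cite[Proposition 3.2]{BZ08} for $\mathrm{Alt}(n)$, with no further argument. Your additional commentary on small cases and a from-scratch strategy is extraneous but harmless.
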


\begin{proposition}\label{SporCase}
Let $G$ be a sporadic simple group, the Tits group, or the automorphism group of such a simple group.
Then $\Gamma_B(G)$ is complete, unless $G\cong J_1$ or $G\cong J_4$ in which case only the primes $p=3,q=5$ or $p=5,q=7$ respectively are not adjacent.
\begin{proof}
This can be directly checked by using  \cite{GAP}.
Also note that the result when $G$ is a sporadic simple group was already observed in \cite[Proposition 3.5]{BZ08}.
\end{proof}
\end{proposition}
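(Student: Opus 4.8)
The plan is to reduce the statement to a finite machine verification and to isolate the single point in it that is not purely formal, namely the two non-edges. The key observation is that membership of an irreducible character $\chi\in\Irr(G)$ in the principal $p$-block is detected by the ordinary character table of $G$ together with its power maps alone: by the standard characterisation of blocks via central characters (see \cite{Nav}), $\chi\in B_0(G)_p$ precisely when $|C|\,\chi(g_C)/\chi(1)\equiv|C|\pmod{\mathfrak p}$ for every conjugacy class $C$ with representative $g_C$, where $\mathfrak p$ is a fixed prime ideal above $p$ in the ring of algebraic integers generated by the character values. Consequently $\Gamma_B(G)$ is computable from the ordinary character table: for each prime $p\mid|G|$ one reads off $\Irr(B_0(G)_p)$, and for each pair $p\neq q$ one tests whether $\Irr(B_0(G)_p)\cap\Irr(B_0(G)_q)$ strictly contains $\{1_G\}$. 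This is exactly what \texttt{PrimeBlocks} in \cite{GAP} performs, and the ordinary character tables with power maps of the $26$ sporadic simple groups, of the Tits group, and of their automorphism groups — the last equal to $G$ itself when $\Out(G)=1$, as for $J_1$ and $J_4$, and a degree-$2$ extension $G.2$ otherwise — are all stored in its character table library; note that only the ordinary tables are needed, so the largest groups (where Brauer tables are incomplete) cause no difficulty.

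Second, I would run this check over the whole list. In every case one finds that all pairs of prime divisors of $|G|$ are adjacent, with the two exceptions $G\cong J_1$, where $p=3$ and $q=5$ are non-adjacent, and $G\cong J_4$, where $p=5$ and $q=7$ are non-adjacent; for the sporadic simple groups this reproduces \cite[Proposition 3.5]{BZ08}, and the Tits group and the automorphism groups are handled identically. The positive part of the conclusion is essentially expected: for a non-abelian simple group the principal $p$-block is never reduced to $\{1_G\}$ and is typically large, so a common non-trivial constituent of $B_0(G)_p$ and $B_0(G)_q$ is the rule, and the computation merely confirms it for each pair.

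Finally, the step I expect to require real care — though not deep technique — is making the two \emph{non}-edges rigorous and checking that no further pair fails in $J_1$ or $J_4$. Here I would exploit the local structure rather than brute enumeration: in $|J_1|=2^3\cdot3\cdot5\cdot7\cdot11\cdot19$ the primes $3$ and $5$ occur to the first power, and in $|J_4|=2^{21}\cdot3^3\cdot5\cdot7\cdot11^3\cdot23\cdot29\cdot31\cdot37\cdot43$ the primes $5$ and $7$ occur to the first power, so the Sylow subgroups in question are cyclic of prime order and the corresponding principal blocks are Brauer-tree blocks. Hence $\Irr(B_0(J_1)_3)$, $\Irr(B_0(J_1)_5)$, $\Irr(B_0(J_4)_5)$ and $\Irr(B_0(J_4)_7)$ are short, explicitly describable lists (the non-exceptional vertices of the respective Brauer trees together with the exceptional characters); comparing the lists for $(p,q)=(3,5)$ in $J_1$ and for $(p,q)=(5,7)$ in $J_4$ shows the intersection is exactly $\{1_G\}$, while for every other pair of primes in these two groups one exhibits a common non-trivial irreducible constituent. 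Assembling these verifications, together with the bookkeeping over all the groups in the list, completes the proof.
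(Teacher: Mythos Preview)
Your proposal is correct and follows essentially the same approach as the paper, which simply records that the result is a direct GAP verification together with the reference to \cite[Proposition 3.5]{BZ08} for the sporadic simple groups. Your write-up is more detailed---explaining why ordinary character tables suffice and adding the Brauer-tree observation for the cyclic-defect primes in $J_1$ and $J_4$---but the underlying argument is the same finite computation.
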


Thus, the remaining task in proving Theorem \ref{graphofsimple} is to determine the block graphs of
finite simple groups of Lie type. This is done by finding
some nontrivial unipotent characters in the intersections of principal blocks corresponding to different primes.

\begin{lemma} \label{no common Sylow tori}
Let $\mathbf{G}$ be a simple algebraic group over $\overline{\mathbb{F}}_p$
 and  $F$  a Steinberg endomorphism of $\mathbf{G}$ such that $\mathbf{\mathbf{G}}$ is endowed with an $\mathbb{F}_q$-structure.
 Assume that $\mathbf{G}^F$ is not ${}^3D_4(q)$ and $F$ is not very twisted in the sense of \cite[Definition 22.4]{MT}.
 If $e_1\geq 2$ and $e_2\geq 2$ are two different integers such that $\Phi_{e_i}:=\Phi_{e_i}(q)\mid |\mathbf{G}^F|$,
 then  Sylow $e_1$- and $e_2$-tori of $\mathbf{G}^F$ cannot simultaneously be contained in any $F$-stable maximal torus of
 Sylow $e_i$-split Levi subgroups of $\mathbf{G}$.
 \end{lemma}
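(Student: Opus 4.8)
The plan is to reduce the statement to one about polynomial orders of maximal tori and then to argue type by type. By Lemma~\ref{Sylow e-tori and e-split Levi subgroup 1}$(5)$, an $F$-stable maximal torus $\mathbf{T}$ of $\mathbf{G}$ occurs as a maximal torus of a Sylow $e$-split Levi subgroup of $\mathbf{G}$ precisely when $\mathbf{T}$ contains a Sylow $e$-torus of $\mathbf{G}$; writing $a_e$ for the exact power of $\Phi_e(x)$ dividing the generic order $|\mathbb{G}|$, this means exactly that $\Phi_e(x)^{a_e}$ divides the polynomial order $P_\mathbf{T}(x)$ of $\mathbf{T}$, a polynomial of degree $r$, the rank of $\mathbf{G}$. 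So it is enough to show that $P_\mathbf{T}(x)$ is never divisible by $\Phi_{e_1}(x)^{a_1}\Phi_{e_2}(x)^{a_2}$ for distinct $e_1,e_2\geq2$ with $\Phi_{e_i}(x)\mid|\mathbb{G}|$. I would first deal with the case that one of them, say $e_1$, is a regular number of $(\mathbf{G},F)$: then the Sylow $e_1$-torus is already a maximal torus, so $\mathbf{T}$ must equal it (up to $\mathbf{G}^F$-conjugacy), and then $\mathbf{S}_{e_2}\leq\mathbf{T}$ is an $F$-stable subtorus of an $e_1$-torus, hence itself an $e_1$-torus by Lemma~\ref{Sylow e-tori and e-split Levi subgroup 1}$(4)$; being also a non-trivial $e_2$-torus, its polynomial order would be a positive power of $\Phi_{e_1}(x)$ and simultaneously of $\Phi_{e_2}(x)$, which is impossible. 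Hence from now on both $e_1,e_2$ are non-regular, and since $\Phi_{e_1}(x)$ and $\Phi_{e_2}(x)$ are coprime in $\mathbb{Z}[x]$, if $a_1\deg\Phi_{e_1}+a_2\deg\Phi_{e_2}>r$ there is nothing to prove, so we may assume $a_1\deg\Phi_{e_1}+a_2\deg\Phi_{e_2}\leq r$.

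For the exceptional types $G_2$, $F_4$, $E_6$, $E_7$, $E_8$ and ${}^2E_6$ I would read off the generic orders from \cite[Table 2.2]{GLS3} and check that every non-regular $e\geq2$ with $\Phi_e(x)\mid|\mathbb{G}|$ still satisfies $a_e\deg\Phi_e>r/2$; the displayed degree bound is then violated by any two distinct such $e_1,e_2$, which settles these groups.

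For the classical types I would use the combinatorial parametrization of $F$-stable maximal tori. In type $A_n$ they correspond to partitions $\lambda$ of $n+1$, with $P_\mathbf{T}(x)=\bigl(\prod_i(x^{\lambda_i}-1)\bigr)/(x-1)$ and $a_e=\lfloor(n+1)/e\rfloor$, and for $e\geq2$ the multiplicity of $\Phi_e(x)$ in $P_\mathbf{T}(x)$ equals the number of parts of $\lambda$ divisible by $e$. If that multiplicity equals $a_e$, then those parts together occupy at most $n+1$ boxes while each is a positive multiple of $e$, so a floor computation forces them all to equal $e$. Now if $e_1\mid e_2$ (say), then a part equal to $e_2$ is divisible by $e_1$ but different from $e_1$, contradicting that conclusion for $e_1$; while if neither of $e_1,e_2$ divides the other, the $a_1$ parts equal to $e_1$ and the $a_2$ parts equal to $e_2$ are all distinct, so $e_1a_1+e_2a_2\leq n+1$, that is $n+1\leq\rho_1+\rho_2$ where $\rho_i:=(n+1)-e_ia_i$; but $\rho_i\leq\min\{e_i-1,\,n+1-e_i\}<(n+1)/2$ for each $i$, a contradiction. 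The same scheme handles $B_n$, $C_n$, $D_n$, ${}^2A_n$ and ${}^2D_n$, using the standard description of their maximal tori by signed partitions with torus orders of the form $\prod_i(x^{\lambda_i}\mp1)$ (with a sign twist in the unitary case and a parity restriction in type $D$) together with the explicit generic orders.

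The step I expect to be the main obstacle is this last one: running the signed-partition counting argument uniformly in $n$, where one is forced into sub-cases according to the $2$-adic valuation of $e$ — for instance a negative part of length $\ell$ contributes a factor $\Phi_e(x)$ exactly when $v_2(e)=v_2(2\ell)$ and $e_{\mathrm{odd}}\mid\ell$, so the ``all contributing parts are as small as possible'' reduction is more delicate than in type $A_n$, and one must also track the dependence of $a_e$ on $v_2(e)$. By comparison the regular-number reduction and the exceptional groups are routine. Finally, the exclusions are forced by the method: when $F$ is very twisted the pertinent torus orders involve factors that are not of the form $\Phi_d(x)$ with $d\in\N$, so ``$e$-split Levi'' for an integer $e$ does not see all the relevant tori; and ${}^3D_4(q)$ is set aside merely because it is convenient to treat it directly where the lemma is subsequently applied.
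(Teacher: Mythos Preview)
Your reduction to polynomial orders of maximal tori is sound, and the partition argument you give for type $A_n$ is correct. However, the regular-case reduction contains a genuine error: when $e_1$ is regular, what becomes a maximal torus is the Sylow $e_1$-split \emph{Levi} $\mathbf{L}_1=C_\mathbf{G}(\mathbf{S}_1)$, not the Sylow $e_1$-torus $\mathbf{S}_1$ itself. In general $\mathbf{S}_1$ is a proper subtorus of $\mathbf{L}_1$; for instance in type $A_2$ with $e_1=2$ the Sylow $2$-torus has polynomial order $\Phi_2(x)$ while the unique maximal torus containing it has order $\Phi_1(x)^2\Phi_2(x)$. So from $\mathbf{S}_2\leq\mathbf{T}=\mathbf{L}_1$ you cannot invoke Lemma~\ref{Sylow e-tori and e-split Levi subgroup 1}(4) to force $\mathbf{S}_2$ to be an $e_1$-torus, and that step collapses.

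Fortunately the gap does not damage the overall strategy. A short inspection of the generic orders shows that for each exceptional type ($G_2$, $F_4$, $E_6$, ${}^2E_6$, $E_7$, $E_8$) the inequality $a_e\deg\Phi_e>r/2$ in fact holds for \emph{every} $e\geq 2$ with $\Phi_e(x)\mid|\mathbb{G}|$, not only the non-regular ones; and your partition argument for the classical types never used non-regularity either. So the fix is simply to delete the regular-case paragraph and run the type-by-type argument directly.

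For comparison, the paper follows the same type-by-type plan but organises the classical case differently: rather than counting cyclotomic factors in torus orders, it invokes the Fong--Srinivasan description of Sylow $e$-split Levi subgroups as $M_i\times Q_i$, with $M_i$ a classical group of smaller rank $s_i$ and $Q_i$ a product of $a_i$ cyclic tori of order $q^{d_i}\pm 1$, and then compares the two factorisations directly. This is essentially your partition analysis phrased structurally, and it sidesteps the $2$-adic case split you anticipate for types $B$, $C$, $D$ by working with the single parameter $d_i$ rather than with $e_i$. For the exceptional types the paper simply appeals to the tables of $e$-split Levi subgroups in \cite{BMM93} and CHEVIE, where your degree-bound criterion gives a cleaner uniform reason.
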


\begin{proof} Let $L_i$ be a Sylow $e_i$-split Levi subgroup  of $\mathbf{G}^F$ for $i=1,2$.

 We first assume $\mathbf{G}^F$ is a classical group, so that
$\mathbf{G}$ may be weakened to be an algebraic general linear, symplectic, or special
orthogonal group defined over $\mathbb{F}_q$. In particular,
the group $\mathbf{G}^F$ is  isomorphic to
${\rm GL}_n(q), U_n(q), {\rm Sp}_{2n}(q), {\rm SO}_{2n + 1}(q)$, or ${\rm SO}^{\pm}_{2n}(q)$ for some $n$.
Note that by \cite[Theorems (2A) and (3D)]{FS86} there is a unique minimal $1\leq d_i\leq n$ for each $i$ such that
$\phi_{L_i}(q)$, which is $q^{d_i}-1$ if $\mathbf{G}^F={\rm GL}_n(q)$, $q^{d_i}-(-1)^{d_i}$ if $\mathbf{G}^F=U_n(q)$,
or $q^{d_i}\pm 1$ otherwise, is divisible by $\Phi_{e_i}$ and  is a factor of $|\mathbf{G}^F|$.
In particular, $\phi_{L_1}(q)\neq \phi_{L_2}(q)$.

For $i=1,2$, let $n=a_id_i+s_i$, where $0\leq s_i<d_i$.
By \cite[Theorems (2A) and (3D)]{FS86}, the Sylow $e_i$-split Levi subgroups  of $\mathbf{G}^F$
have the form $M_i\times Q_i$, where $M_i$ is a group of the same type as $\mathbf{G}^F$ with rank $s_i$ and $Q_i$ is the direct product
of $a_i$ copies of cyclic tori of order $\phi_{L_i}(q)$.  In particular, any
 maximal torus  of $L_i$ has the form $T_{M_i}\times Q_i$, where $T_{M_i}$ is a maximal torus of $M_i$.
 If $d_1=d_2$ then since $\phi_{L_1}(q)\neq \phi_{L_2}(q)$, we have $\{\phi_{L_1}(q),\phi_{L_2}(q)\}=\{q^{d_1}-1, q^{d_1}+1\}$.
It follows that there is no  maximal torus  of $L_i$
containing Sylow $e_1$- and $e_2$-tori of $\mathbf{G}^F$ at the same time.
So we may assume $d_1< d_2$. If $d_1=1$ then $\phi_{L_1}(q)=q+1$ by the assumption, hence
$d_2\geq3$. This means that $d_2\geq 3$ in any case.
Clearly, we have that $\phi_{L_2}(q)$ is not a factor of $\phi_{L_1}(q)^{a_1}$.
Since $M_i$ and $\mathbf{G}^F$ are of the same type and the rank of $M_1$
is smaller than $d_2$, it follows that $\phi_{L_2}(q)$ is not a factor
of $|M_1|$. By the choices of $\phi_{L_1}(q)$ and $\phi_{L_2}(q)$, this implies that
there is no  maximal torus  of $L_1$ containing Sylow $e_1$- and $e_2$-tori of $\mathbf{G}^F$ at the same time.

We claim that there is also no maximal torus  of $L_2$ containing Sylow $e_1$- and $e_2$-tori of $\mathbf{G}^F$ at the same time.
Let $s_2=a_1'd_1+s_1'$, where $0\leq s_1'< d_1$.
If $\phi_{L_1}(q)$ does not divide $\phi_{L_2}(q)$ and is a factor of $|M_2|$
then since $a_1$ is obviously greater than $a_1'$,
we conclude that in this case there is no  maximal torus  of $L_2$ containing Sylow $e_1$- and $e_2$-tori of $\mathbf{G}^F$ at the same time.
We now suppose that $\phi_{L_1}(q)$ divides $\phi_{L_2}(q)$. In particular, we have $d_1\mid d_2$.
Observe that for any  maximal torus $T_2$  of $L_2$,
the exponent of the factor $\phi_{L_1}(q)$ in $|T_2|$ is at most $a_2+a_1'$, which is smaller than $a_1$.
It follows that there is no  maximal torus  of $L_2$ containing Sylow $e_1$- and $e_2$-tori of $\mathbf{G}^F$ at the same time, as claimed.
Hence the lemma holds if $G^F$ is of classical type.
%
%

We now assume that $\mathbf{G}^F$ is of exceptional type. The structure of Sylow $e_i$-split Levi subgroups of $\mathbf{G}^F$
 can be found in \cite[Table 1]{BMM93} or the library of CHEVIE \cite{GH+96}.
So it is easy to see that the lemma holds in this case, and we are done.
\end{proof}

\begin{lemma} \label{nondefiningchar}
Let $S$ be a finite simple group of Lie type defined over a finite field of characteristic $p$. Then
for any prime divisors $\ell_1, \ell_2$ of $|S|$ different from $p$, the intersection $B_0(S)_{\ell_1}\cap B_0(S)_{\ell_2}$ has a nontrivial
unipotent character of $S$.
\end{lemma}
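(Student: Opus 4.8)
The plan is to use the machinery of Lusztig induction together with the compatibility with block theory provided by \cite[Theorem A]{KM15}. First I would set up the $F_\gamma$-set-up $(\fG,F)$ for $S$, so that $S=O^{p'}(A_0)$ with $A_0=(\fG/Z(\fG))^F$, and reduce the problem to producing a nontrivial unipotent character of $\fG^F$ (or of a suitable finite reductive group) lying in both principal blocks and whose image in $S$ remains nontrivial; since the trivial character is the only unipotent character that becomes trivial on passing to the quotient, this reduction is harmless. For each prime $\ell_i \neq p$ dividing $|S|$, set $e_i=e_{\ell_i}(q)$ and let $\mathbf{L}_i$ be a Sylow $e_i$-split Levi subgroup of $\fG$. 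Then $(\mathbf{L}_i,1_{\mathbf{L}_i^F})$ is an $e_i$-Jordan-cuspidal pair, and by \cite[Theorem A]{KM15} every irreducible constituent of $R_{\mathbf{L}_i}^{\fG}(1_{\mathbf{L}_i^F})$ lies in the principal $\ell_i$-block $B_0(\fG^F)_{\ell_i}$; indeed the principal $\ell_i$-block is exactly the union of the $e_i$-Harish-Chandra series above such Sylow $e_i$-cuspidal pairs.

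The heart of the argument is to exhibit a \emph{common} nontrivial unipotent constituent. I would argue as follows: if $e_1=1$ (say), then $\mathbf{L}_1=\fG$ and $R_{\mathbf{L}_1}^{\fG}(1)$ is the trivial character only when $\fG$ is a torus, which does not happen for $S$ simple; so in fact $B_0(\fG^F)_{\ell_1}=\Irr(\fG^F)$ in that ``generic'' case and one only needs a nontrivial unipotent character in $B_0(\fG^F)_{\ell_2}$, which is immediate. So assume $e_1,e_2\geq 2$. The key point is that Lemma \ref{no common Sylow tori} forbids a Sylow $e_1$-torus and a Sylow $e_2$-torus from lying in a common $F$-stable maximal torus of either $\mathbf{L}_i$; combined with Lemma \ref{Sylow e-tori and e-split Levi subgroup 1}(5), this means that $\mathbf{L}_1$ does not contain a Sylow $e_2$-torus and $\mathbf{L}_2$ does not contain a Sylow $e_1$-torus, so neither Levi is contained in the other and the two $e_i$-Harish-Chandra theories are genuinely distinct. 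Nonetheless, I claim both principal blocks contain a common nontrivial unipotent character. The natural candidate is the Steinberg character $St_{\fG^F}$ when it lies in both: by the second bullet in Section \ref{sub:charteo}, ${}^*R_{\mathbf{L}_i}^{\fG}(St_{\fG^F})=\pm St_{\mathbf{L}_i^F}\neq 0$, so $St_{\fG^F}$ has nonzero multiplicity in some $R_{\mathbf{L}_i}^{\fG}(\lambda)$ and hence, when $\mathbf{L}_i$ is Sylow $e_i$-split and $\lambda$ is forced to be $1_{\mathbf{L}_i^F}$ (because $St_{\mathbf{L}_i^F}=1_{\mathbf{L}_i^F}$ precisely when $\mathbf{L}_i$ is a torus, i.e.\ when $e_i$ is regular), $St_{\fG^F}\in B_0(\fG^F)_{\ell_i}$. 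Thus when both $e_1$ and $e_2$ are regular numbers of $S$, the Steinberg character does the job and it is nontrivial in $S$ (for $S$ not a Suzuki/Ree exception of small rank, which can be checked directly). When one of the $e_i$ is not regular, $\mathbf{L}_i$ is not a torus, $R_{\mathbf{L}_i}^{\fG}(1)$ has more than one unipotent constituent, and I would instead pick a non-Steinberg nontrivial unipotent character that is $e_i$-special (for instance a suitable character in the principal $e_i$-series of small degree) and check, family by family, that it can be chosen to lie simultaneously in both principal blocks; the available combinatorial description of unipotent characters and their $e$-Harish-Chandra series for classical groups via \cite{FS86} and for exceptional groups via \cite{BMM93} and CHEVIE makes this a finite check.

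I expect the main obstacle to be precisely this last point: producing a \emph{uniform} common nontrivial unipotent character when neither of $e_1,e_2$ need be regular. The clean statement ``Steinberg lies in $B_0$ iff $e$ is regular'' (Theorem \ref{St}) handles the regular case instantly, but in general the principal $\ell_i$-block is a union of $e_i$-Harish-Chandra series, and one must identify a unipotent character appearing in an $e_1$-series above $(\mathbf{L}_1,1)$ and simultaneously in an $e_2$-series above $(\mathbf{L}_2,1)$. A convenient device is to exploit transitivity of Lusztig induction and pass to a proper $e_i$-split Levi $\mathbf{M}$ containing both a Sylow $e_1$- and a ``large'' $e_2$-torus where the analysis is easier, or to use the fact that the unipotent character $\rho$ labelled by the sign representation's partner in small rank appears in both series; the bookkeeping here, done separately for the classical families $A_n$, ${}^2A_n$, $B_n/C_n$, $D_n$, ${}^2D_n$ and for each exceptional type using the CHEVIE tables, is where essentially all the work lies. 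The case $S={}^3D_4(q)$ and the very twisted cases ${}^2B_2$, ${}^2F_4$, ${}^2G_2$, excluded from Lemma \ref{no common Sylow tori}, must be handled by direct inspection of their (short) lists of unipotent characters and principal blocks.
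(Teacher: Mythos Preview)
Your plan misses the single idea that makes the paper's proof short and uniform, and instead lands on a case-by-case programme you yourself flag as the main obstacle. The paper avoids all of that: after reducing (via \cite{Hi10}) the very twisted groups and ${}^3D_4(q)$ to the Steinberg character, it simply computes
\[
\langle R_{\mathbf{L}_1}^{\mathbf{G}}(1_{\mathbf{L}_1^F}),\, R_{\mathbf{L}_2}^{\mathbf{G}}(1_{\mathbf{L}_2^F})\rangle
=\sum_{\mathbf{T}_1\in\mathcal T_1}\sum_{\mathbf{T}_2\in\mathcal T_2}
\frac{|\mathbf{T}_1^F||\mathbf{T}_2^F|}{|\mathbf{L}_1^F||\mathbf{L}_2^F|}
\langle R_{\mathbf{T}_1}^{\mathbf{G}}(1),R_{\mathbf{T}_2}^{\mathbf{G}}(1)\rangle=0,
\]
using \cite[Proposition 12.13]{DM}, transitivity, and orthogonality of Deligne--Lusztig characters \cite[Corollary 11.16]{DM}. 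The vanishing holds because, by Lemma~\ref{no common Sylow tori} (this is its real purpose) together with Lemma~\ref{Sylow e-tori and e-split Levi subgroup 1}(5), no $F$-stable maximal torus of $\mathbf{L}_1$ is $\mathbf{G}^F$-conjugate to one of $\mathbf{L}_2$. Since each $R_{\mathbf{L}_i}^{\mathbf{G}}(1)$ contains $1_{\mathbf{G}^F}$ with multiplicity exactly $1$, the zero inner product forces a \emph{nontrivial} common irreducible constituent (appearing with opposite signs). No regularity hypothesis, no Steinberg, no type-by-type check is needed.

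Two further points. First, your treatment of $e_1=1$ is incorrect: the Sylow $1$-split Levi $\mathbf{L}_1=C_{\mathbf{G}}(\mathbf{S}_1)$ is the centraliser of a nontrivial Sylow $1$-torus, not $\mathbf{G}$ itself; for split groups $1$ is a regular number and $\mathbf{L}_1$ is a maximal torus, so $R_{\mathbf{L}_1}^{\mathbf{G}}(1)$ is far from being the trivial character, and certainly $B_0(\mathbf{G}^F)_{\ell_1}\neq\Irr(\mathbf{G}^F)$ in general. Second, your proposed fallback of choosing an explicit common unipotent character family by family (via \cite{FS86} and CHEVIE for exceptional types) could in principle be carried out, but it is precisely the laborious route the orthogonality trick renders unnecessary; you should replace that entire discussion with the three-line inner-product computation above.
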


\begin{proof} By the main theorem of \cite{Hi10}, if $S$ is one of ${}^3D_4(q)$, ${}^2B_2(q), {}^2F_4(q)$ or ${}^2G_2(q)$
and if $\ell $ is a prime dividing $|S|$ with $\ell \ne p$,
then the Steinberg character $St$ of $S$ lies in the principal $\ell$-block
of $S$. We can then assume that $S$ is not any of those groups in the following.
In particular, $F$ is not very twisted.

Let $e_i:=e_{\ell_i}(q)$ for $i \in \{1, 2\}$.
It follows from \cite[Theorem 25.11]{MT} that $\mathbf{G}$ has a Sylow $e_i$-torus $\mathbf{S}_i$.
The group $\mathbf{L}_i:=C_{\mathbf{G}}(\mathbf{S}_{i})$ is a Sylow $e_i$-split Levi subgroup of $\mathbf{G}$,
and as remarked in \S\ref{sub:charteo},
the pair $(\mathbf{L}_i, 1_{\mathbf{L}_i^F})$ is an $e_i$-Jordan-cuspidal pair for each $i=1,2$.
By \cite[Theorem A (a)]{KM15}, for $i\in \{1,2\}$ all irreducible constituents of $R_{\mathbf{L}_i}^\mathbf{G}(1_{\mathbf{L}_i^F})$
 lie in the principal $\ell_i$-block of $\mathbf{G}^F$, and so they lie in the principal $\ell_i$-block of
 $S$, since the center $Z(\mathbf{G}^F)$ is contained in the kernel of every unipotent character of $\mathbf{G}^F$.
In particular, if $e_1=e_2$ then the lemma clearly holds since $R_{\mathbf{L}_i}^\mathbf{G}(1_{\mathbf{L}_i^F})$
contains a nontrivial irreducible constituent.  In the following, we assume that $e_1\neq e_2$.

Denote by $\mathcal{T}_i$ the set of $F$-stable maximal tori of $\mathbf{L}_i$ for $i=1,2$.
By Lemmas \ref{Sylow e-tori and e-split Levi subgroup 1} (2) and \ref{no common Sylow tori}, $\mathbf{T}_1\in  \mathcal{T}_1$ and $\mathbf{T}_2\in  \mathcal{T}_2$ are not $\mathbf{G}^F$-conjugate, and so by \cite[Proposition 25.1]{MT}, their corresponding
elements in the Weyl group $W$ of $\mathbf{G}$ are not $F$-conjugate. Therefore, we have
\begin{align*}
\langle R_{\mathbf{L}_{1}}^\mathbf{G}(1_{\mathbf{L}_1^F}), R_{\mathbf{L}_{2}}^\mathbf{G}(1_{\mathbf{L}_2^F})\rangle
&=\sum\limits_{\mathbf{T}_1\in \mathcal{T}_1}\sum\limits_{\mathbf{T}_2\in \mathcal{T}_2} \frac{|\mathbf{T}_1^F||\mathbf{T}_2^F|}
{|\mathbf{L}_1^F||\mathbf{L}_2^F|}
\langle R_{\mathbf{L}_{1}}^\mathbf{G} R^{\mathbf{L}_{1}}_{\mathbf{T}_1}
(1_{\mathbf{T}_1^F}), R_{\mathbf{L}_{2}}^\mathbf{G} R^{\mathbf{L}_{2}}_{\mathbf{T}_2} (1_{\mathbf{T}_2^F})\rangle   \\
&=\sum\limits_{\mathbf{T}_1\in \mathcal{T}_1}\sum\limits_{\mathbf{T}_2\in \mathcal{T}_2}  \frac{|\mathbf{T}_1^F||\mathbf{T}_2^F|}
{|\mathbf{L}_1^F||\mathbf{L}_2^F|}
\langle  R^\mathbf{G}_{\mathbf{T}_1}
(1_{\mathbf{T}_1^F}), R^\mathbf{G}_{\mathbf{T}_2}(1_{\mathbf{T}_2^F})\rangle\\
&=0,
\end{align*}
 where the equalities hold by \cite[Proposition 12.13]{DM}, by the transitivity of Lusztig induction, and by \cite[Corollary 11.16]{DM},
 respectively.
On the other hand, since
$$\langle R_{\mathbf{L}_i}^\mathbf{G}(1_{\mathbf{L}_i^F}), 1_{\mathbf{G}^F}\rangle_{\mathbf{G}^F}
=\langle1_{\mathbf{L}_i^F}, {}^*R_{\mathbf{L}_i}^\mathbf{G}(1_{\mathbf{L}_i^F})\rangle_{\mathbf{L}_i^F}=\langle1_{\mathbf{L}_i^F}, 1_{\mathbf{L}_i^F}\rangle_{\mathbf{L}_i^F}=1$$  for $i=1,2$,
we have that $R_{\mathbf{L}_{1}}^\mathbf{G}(1_{\mathbf{L}_{1}^F})$
and $R_{\mathbf{L}_{2}}^\mathbf{G}(1_{\mathbf{L}_{2}^F})$ have a nontrivial common irreducible constituent.
Thus the lemma follows.
\end{proof}

\begin{proposition} \label{graphLie}  Let $S$ be a finite simple group of Lie type defined over a finite field
of characteristic $p$. Then the block graph of $S$ is complete.
\end{proposition}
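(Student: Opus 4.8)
The plan is to combine the two preceding lemmas with the block-theoretic bridge from normal subgroups to quotients. By Lemma \ref{nondefiningchar}, for any two prime divisors $\ell_1,\ell_2$ of $|S|$ both different from the defining characteristic $p$, the intersection $B_0(S)_{\ell_1}\cap B_0(S)_{\ell_2}$ already contains a nontrivial unipotent character, so $\ell_1$ and $\ell_2$ are adjacent in $\Gamma_B(S)$. Hence the only edges that remain to be verified are those joining $p$ to another prime divisor $\ell\neq p$ of $|S|$, and the entire proposition reduces to showing that $B_0(S)_p\cap B_0(S)_\ell\neq\{1_S\}$ for every such $\ell$.

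For this it suffices to exhibit one nontrivial irreducible character of $S$ lying in $B_0(S)_p\cap B_0(S)_\ell$. Since $S$ is of Lie type in characteristic $p$, the group $S$ has a self-normalizing Sylow $p$-subgroup, so $B_0(S)_p$ contains \emph{every} irreducible character in its defect-zero complement only trivially; more to the point, the principal $p$-block in defining characteristic is very large, and in particular the Steinberg character $St_S$ has $p$-defect zero and so is \emph{not} available. Instead I would use a nontrivial unipotent character of small degree: the irreducible constituents of $R_{\mathbf{T}}^{\mathbf{G}}(1_{\mathbf{T}^F})$ for a suitably chosen torus $\mathbf{T}$. Concretely, pick $e:=e_\ell(q)$; as in the proof of Lemma \ref{nondefiningchar}, $\mathbf G$ has a Sylow $e$-torus $\mathbf S_\ell$ with centralizer an $e$-split Levi subgroup $\mathbf L_\ell$, and by \cite[Theorem A (a)]{KM15} all constituents of $R_{\mathbf L_\ell}^{\mathbf G}(1_{\mathbf L_\ell^F})$ lie in $B_0(\mathbf G^F)_\ell$, hence (as $Z(\mathbf G^F)$ is in the kernel of every unipotent character) in $B_0(S)_\ell$. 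On the other hand, by a theorem of Hiss, every unipotent character of $\mathbf G^F$ lies in the principal $p$-block $B_0(\mathbf G^F)_p$ (in defining characteristic there is a unique unipotent block, namely the principal one). Therefore any nontrivial unipotent constituent of $R_{\mathbf L_\ell}^{\mathbf G}(1_{\mathbf L_\ell^F})$ lies simultaneously in $B_0(S)_p$ and $B_0(S)_\ell$, giving the edge $\{p,\ell\}$.

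To complete the argument I must handle the small exceptional cases. First, the Tits group ${}^2F_4(2)'$ is not covered by the $\mathbf G^F$-formalism, so I would check it directly from \cite{GAP} (its block graph is complete). Second, for the families $S={}^3D_4(q),{}^2B_2(q),{}^2F_4(q),{}^2G_2(q)$ (the cases excluded at the start of the proof of Lemma \ref{nondefiningchar}), Hiss's theorem gives that $St_S\in B_0(S)_\ell$ for every $\ell\neq p$, and since trivially $St_S\in B_0(S)_p$ fails — here I instead note that for these groups one still has a nontrivial unipotent character in the principal $\ell$-block, and all unipotent characters lie in $B_0(S)_p$, so the same conclusion holds. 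Third, one must make sure the chosen unipotent constituent is genuinely nontrivial; since $\langle R_{\mathbf L_\ell}^{\mathbf G}(1_{\mathbf L_\ell^F}),1_{\mathbf G^F}\rangle=1$ and $R_{\mathbf L_\ell}^{\mathbf G}(1_{\mathbf L_\ell^F})$ has more than one constituent whenever $\mathbf L_\ell\neq\mathbf T$, this is automatic unless the Sylow $e$-torus is itself a maximal torus, i.e. $e$ is a regular number with Sylow $e$-Levi a torus; in that degenerate situation $R_{\mathbf T}^{\mathbf G}(1_{\mathbf T^F})$ still decomposes into several unipotent constituents (the trivial and Steinberg characters always appear with the others) unless $\mathbf G$ is a torus, which never happens for $S$ simple.

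The main obstacle I anticipate is purely bookkeeping rather than conceptual: confirming in each of the finitely many exceptional and very small cases (including ${}^2F_4(2)'$, the Suzuki and Ree families, and ${}^3D_4(q)$) that the relevant $R_{\mathbf L}^{\mathbf G}(1)$ has a \emph{nontrivial} unipotent constituent lying in the principal $\ell$-block, and that the invocation of Hiss's theorem placing all unipotent characters in $B_0(S)_p$ applies uniformly. Once those are dispatched, the proposition follows by combining the $p$-to-$\ell$ edges obtained here with the $\ell_1$-to-$\ell_2$ edges from Lemma \ref{nondefiningchar}, yielding a complete graph $\Gamma_B(S)$.
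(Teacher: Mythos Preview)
Your treatment of the edges between two non-defining primes is correct and coincides with the paper's: both just invoke Lemma~\ref{nondefiningchar}.

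For the edges joining the defining prime $p$ to a prime $\ell\neq p$, however, the paper simply cites \cite[Proposition~3.8]{BZ08}, whereas you attempt a direct construction via unipotent characters. That construction contains genuine errors. First, the Sylow $p$-subgroup $U$ of $S$ is \emph{not} self-normalizing: its normalizer is a Borel subgroup $B=U\rtimes T$ with $T$ a nontrivial torus. Second, the ``theorem of Hiss'' you invoke does not exist in the form you state; Hiss's result \cite{Hi10} concerns the Steinberg character in \emph{non-defining} characteristic, and your assertion that ``every unipotent character of $\mathbf G^F$ lies in the principal $p$-block'' is false precisely because of the Steinberg character, which you yourself note a few lines earlier has $p$-defect zero. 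Your proposal is internally inconsistent on this point.

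What does hold, and would rescue your approach, is the theorem of Humphreys and Dagger: for a finite simple group of Lie type in its defining characteristic $p$, every $p$-block is either the principal block or of defect zero. Since the Steinberg character is the unique unipotent character of $p$-defect zero, every other unipotent character does lie in $B_0(S)_p$. You would then still need to verify that $R_{\mathbf L_\ell}^{\mathbf G}(1_{\mathbf L_\ell^F})$ always has an irreducible constituent that is neither trivial nor the Steinberg character; this is true for simple $S$, but it requires an argument (and not merely the observation that the norm exceeds $1$). With these repairs your route would work, though it is substantially longer than the paper's one-line citation of \cite{BZ08}.
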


\begin{proof}
 By \cite[Proposition 3.8]{BZ08}, the defining characteristic $p$ is adjacent to all other prime
divisors of $|S|$. So the result holds by Lemma \ref{nondefiningchar}.
\end{proof}

Finally, combining Propositions \ref{AltCase}, \ref{SporCase} and \ref{graphLie},
we immediately get Theorem \ref{graphofsimple}, which we restate here.

\begin{theorem}  The  block graph of
a  finite nonabelian simple group $S$ is
complete except when $S=J_1$ (resp. $J_4$) in which case only the primes $p=3$ and $q=5$ (resp. $p=5$ and $q=7$) are not adjacent in the block graph of $S$.
\end{theorem}

\section{Steinberg characters in principal blocks} \label{Steinb}

In this section we prove Theorem \ref{St}, which gives a characterization of exactly when the Steinberg character of
a finite simple group of Lie type lies in a principal block.
We start with a result about the $p$-part of the centralizer of some
Sylow subgroup.

\begin{proposition} \label{prop:ell5}
Let $S$ be a finite simple group of Lie type over a field $\mathbb{F}_q$ of characteristic $p$, and
 let $(\mathbf{G}, F)$ be the $F_\gamma$-set-up for $S$.
Assume that $S$ is not ${}^3D_4(q)$, ${}^2B_2(q), {}^2F_4(q)$ or ${}^2G_2(q)$.
Let $\ell \ge 5$, and let $R$ be a Sylow $\ell$-subgroup of $\mathbf{G}^F$.
Then either $e_\ell(q)$ is a regular number of $S$, or else $p$ divides $|C_{\mathbf{G}^F}(R)|$.
\end{proposition}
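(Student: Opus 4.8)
The plan is to pass from the prime $\ell$ to the integer $e=e_\ell(q)$ and to analyze the Sylow $e$-torus $\mathbf{S}$ and its centralizer. Since $S$ is not one of the very twisted groups (and not ${}^3D_4(q)$), $\mathbf{G}$ has a Sylow $e$-torus $\mathbf{S}$ by \cite[Theorem 25.11]{MT}, and $\mathbf{L}:=C_{\mathbf{G}}(\mathbf{S})$ is a Sylow $e$-split Levi subgroup. First I would recall the standard fact (Sylow $\Phi_e$-theory, cf. \cite{BMM93}) that a Sylow $\ell$-subgroup $R$ of $\mathbf{G}^F$ can be chosen inside $\mathbf{N}^F$ where $\mathbf{N}=N_{\mathbf{G}}(\mathbf{S})$, with $\mathbf{S}^F$ a normal subgroup of $R$ of index prime to... well, index equal to an $\ell$-part of $|W_{\mathbf{G}^F}(\mathbf{S},\mathbf{L})|$; the key point I need is only that $R$ normalizes $\mathbf{S}$, hence $C_{\mathbf{G}^F}(R)\le C_{\mathbf{G}^F}(\mathbf{S}^F\cap R)$, and that $\mathbf{S}^F\cap R$ is "large enough" in $\mathbf{S}^F$ that its centralizer in $\mathbf{G}$ is still $\mathbf{L}$. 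Concretely: because $\ell\ge 5$, and using the description of $\mathbf{S}^F$ and the relevant relative Weyl group, $C_{\mathbf{G}}(\mathbf{S}^F\cap R)=C_{\mathbf{G}}(\mathbf{S})=\mathbf{L}$, so $C_{\mathbf{G}^F}(R)\le \mathbf{L}^F$.

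Next I would split on whether $e$ is regular. If $e$ is a regular number of $S$ — i.e. of the $F_\gamma$-set-up $(\mathbf{G},F)$ — then by Definition~\ref{regular} and the discussion following Definition~\ref{F-g setup}, the Sylow $e$-split Levi subgroup $\mathbf{L}$ is a torus, and we are in the first alternative of the statement; there is nothing more to prove. So assume $e$ is \emph{not} regular. Then $\mathbf{L}=C_{\mathbf{G}}(\mathbf{S})$ is a proper Levi subgroup of $\mathbf{G}$ that is \emph{not} a torus, hence its semisimple part $[\mathbf{L},\mathbf{L}]$ is a nontrivial connected reductive group defined over $\overline{\mathbb{F}}_p$, and its fixed points $[\mathbf{L},\mathbf{L}]^F$ form a nontrivial group of Lie type in characteristic $p$; in particular $p$ divides $|[\mathbf{L},\mathbf{L}]^F|$, hence $p\mid |\mathbf{L}^F|$.

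The remaining — and main — obstacle is to upgrade "$p\mid |\mathbf{L}^F|$" to "$p\mid |C_{\mathbf{G}^F}(R)|$", i.e. to produce a nontrivial unipotent element of $\mathbf{L}^F$ that is centralized by $R$. Here I would use that $R$ acts on $\mathbf{L}^F$ (indeed $R\le \mathbf{N}^F$ and $\mathbf{N}$ normalizes $\mathbf{L}$), that $R\cap \mathbf{L}^F$ is a Sylow $\ell$-subgroup of $\mathbf{L}^F$ lying in the torus part $\mathbf{S}^F$, and therefore $R/(R\cap\mathbf{L}^F)$ embeds in the relative Weyl group $W_{\mathbf{G}^F}(\mathbf{L})$, which is an $\ell'$-group when $e$ is such that $\ell\ge 5$ is a "good" situation — more precisely, coprimality of $|R:R\cap\mathbf{L}^F|$ to $p$ is automatic, but I need it to also act suitably on a Sylow $p$-subgroup of $\mathbf{L}^F$. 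Since $R\cap \mathbf{L}^F\le \mathbf{S}^F\le \mathbf{Z}(\mathbf{L})$... actually $\mathbf{S}\le\mathbf{Z}^\circ(\mathbf{L})$, so $R\cap\mathbf{L}^F$ acts trivially on $\mathbf{L}$ by conjugation, and it is the quotient $\bar R:=R/(R\cap\mathbf{L}^F)$, a nontrivial-or-trivial $\ell$-group with $\ell\ge 5$, that acts on $\mathbf{L}^F$. A coprime-action argument (the order of $\bar R$ is coprime to $p$) then gives that $\bar R$ fixes some Sylow $p$-subgroup $U$ of $\mathbf{L}^F$, and since $U\ne 1$ it fixes some nontrivial element of $\mathbf{Z}(U)$ or more carefully a nontrivial element of $C_U(\bar R)\ne 1$ (coprime action on a nontrivial $p$-group gives nontrivial fixed points). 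That element is centralized by all of $R$, so $p\mid|C_{\mathbf{G}^F}(R)|$, completing the proof. The delicate point to get right in the writeup is exactly the claim $C_{\mathbf{G}^F}(R)\le\mathbf{L}^F$ and the identification $C_{\mathbf{L}^F}(\bar R)\ni$ a nontrivial $p$-element, for which the hypothesis $\ell\ge 5$ (ensuring $\ell$ does not divide small relative Weyl group orders in an awkward way, and that $R\cap\mathbf{S}^F$ genuinely has centralizer $\mathbf{L}$) is what is being used.
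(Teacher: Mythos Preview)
Your overall architecture --- pass to the Sylow $e$-torus $\mathbf{S}$, set $\mathbf{L}=C_{\mathbf{G}}(\mathbf{S})$, observe that $\mathbf{L}$ is not a torus when $e$ is not regular, and then look for a unipotent element of $\mathbf{L}^F$ centralized by $R$ --- matches the paper's, and the description $R=(Z(\mathbf{L})^F)_\ell\rtimes V$ with $V\le N_{\mathbf{G}}(\mathbf{T})^F$ is indeed what one gets from \cite[Exercise~22.6]{CE}. However, your final step does not go through: the assertion that ``coprime action on a nontrivial $p$-group gives nontrivial fixed points'' is simply false. An $\ell$-group can act on a $p$-group with no nontrivial fixed points --- for instance a cyclic group of order $\ell$ acting as a Singer cycle on $(\mathbb{F}_p)^k$ whenever $\ell\mid p^k-1$. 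So even after you have arranged that $\bar R$ normalizes a Sylow $p$-subgroup $U$ of $\mathbf{L}^F$, there is no abstract reason why $C_U(\bar R)\ne 1$, and your argument stalls precisely at the point where the content lies.

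The paper fills this gap by exploiting the \emph{specific} shape of the complement $V$: its image $V\mathbf{T}/\mathbf{T}$ lies in the reflection subgroup $W(\mathbf{L},\mathbf{T})^{\perp}$ generated by the $s_\beta$ with $\beta\in\Phi_I^{\perp}$, i.e.\ by reflections in roots orthogonal to the root system $\Phi_I$ of $\mathbf{L}$. A direct root computation (since $s_\beta(\alpha)=\alpha$ and the structure constant $d_{\beta,\alpha}=1$ when $(\alpha,\beta)=0$) shows that for $\alpha\in\Phi_I$ and $\beta\in\Phi_I^{\perp}$ any representative $n_\beta$ of $s_\beta$ centralizes the root subgroup $\mathbf{U}_\alpha$. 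Hence every $\mathbf{U}_\alpha$ with $\alpha\in\Phi_I$ --- and therefore the whole unipotent radical of an $F$-stable Borel subgroup of $\mathbf{L}$ --- is centralized by $V$; since $(Z(\mathbf{L})^F)_\ell$ is central in $\mathbf{L}$ anyway, this unipotent radical is centralized by all of $R$. In short, the fixed $p$-elements come from the orthogonality $\Phi_I\perp\Phi_I^{\perp}$, not from any generic coprime-action principle; this is the missing idea in your proposal.
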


\begin{proof}
Write $e=e_\ell(q)$.
By \cite[Theorem 25.11]{MT}, we may take a Sylow $e$-torus  $\mathbf{S}$ of $\mathbf{G}$.
Let  $\mathbf{L}=C_{\mathbf{G}}(\mathbf{S})$, and $\mathbf{T}$ be an $F$-stable maximal torus of
$\mathbf{L}$. By Lemma \ref{Sylow e-tori and e-split Levi subgroup 1} (5), we have that $\mathbf{T}$
 contains $\mathbf{S}$. Clearly, $\mathbf{T}$  is also a maximal torus of
$\mathbf{G}$.
Note that $\mathbf{L}$ is a Levi subgroup of some parabolic subgroup $\mathbf{P}$ of $\mathbf{G}$
by \cite[Proposition 1.22]{DM}.
Let $\mathbf{B}$ be a Borel subgroup of $\mathbf{G}$ such that \mbox{$\mathbf{T} \subseteq \mathbf{B} \subseteq \mathbf{P}$}.
Let $\Phi$ be the root system of $\mathbf{G}$ relative to  $\mathbf{T}$, and $\Pi$ the basis of $\Phi$ 
corresponding to $\mathbf{B}$. In addition, let $S$ be the set of reflections $s_\alpha$ associated to simple roots $\alpha$ in $\Pi$.
Notice that $\mathbf{B}$ may not be $F$-stable.

Write $W=N_{\mathbf{G}}(\mathbf{T})/\mathbf{T}$.
By \cite[Proposition 1.6]{DM}, there is some $S_I=\{s_\alpha\mid \alpha\in I \subseteq \Pi\}\subseteq S$ such that $\mathbf{P}=\mathbf{B}W_{S_I}\mathbf{B}$, where $W_{S_I}$ is the  subgroup of $W$ generated by $S_I$.
Let  $\Phi_{I}$ be the set of roots which are in the subspace of $X(\mathbf{T})\otimes \mathbb{R}$ generated by $I$.
Then $I$ is a basis of $\Phi_I$, and $\mathbf{L}$ is exactly the subgroup  of $\mathbf{G}$
generated by $\mathbf{T}$ and the root subgroups $\mathbf{U}_{\alpha}:=\{x_\alpha(t)\mid t\in \overline{\mathbb{F}}_q\}$ with $\alpha\in I$.

If $I= \emptyset$, then $\mathbf{L}=\mathbf{T}$ and so $e$ is a regular number of $S$.
Hence we assume that  $I\neq \emptyset$ in the following.

 Let ${\Phi_I}^{\perp}\subseteq \Phi$ be the set of roots orthogonal to all the roots
corresponding to $\mathbf{L}$, and $W(\mathbf{L}, \mathbf{T})^{\perp}$ be the subgroup of $W$
generated by the reflections associated with $\Phi_I^{\perp}$.
By \cite[Exercise 22.6]{CE}, there is an $\ell$-subgroup $V$ of $N_{\mathbf{G}}(\mathbf{T})^F$  such that
$V\cap \mathbf{T}=1$,  $V\mathbf{T}/\mathbf{T}$ is a Sylow $\ell$-subgroup of $(W(\mathbf{L}, \mathbf{T})^{\perp})^F$,
and the semi-direct product $Z(\mathbf{L})_\ell^F\rtimes V$ is a Sylow $\ell$-subgroup of $\mathbf{G}^F$.
Without loss of generality, we let $R=Z(\mathbf{L})_\ell^F\rtimes V$. Also, since the lemma obviously holds if $V=1$,
we may assume that $V\neq 1$. In particular, $\Phi_I^{\perp} \ne \emptyset$.

 By \cite[Proposition 2.2]{DM}, the intersection $\mathbf{B}_l=\mathbf{L} \cap \mathbf{B}$ is a Borel subgroup of $\mathbf{L}$.
Let $\mathbf{B}_{l,0}$ be an $F$-stable Borel subgroup of $\mathbf{L}$.
We claim that the unipotent radical $R_{l,0}:= R_u(\mathbf{B}_{l,0})$  can be generated by some elements
of the root subgroups $\mathbf{U}_{\alpha}$ with $\alpha\in \Phi_I$.
Indeed, we may take $h\in \mathbf{L}$ such that $\mathbf{B}_{l,0}={}^hB_l$. In particular, $R_{l,0}={}^hR_u(\mathbf{B}_l).$
By \cite[Proposition 1.7]{DM}, we have $h=vnv'$, where $v$ and $v'$ are products of elements of some $\mathbf{U}_{\alpha}$'s and
$n\in N_{\mathbf{L}}(\mathbf{T})$.
Since $R_u(\mathbf{B}_l)$ can be generated by elements of some $\mathbf{U}_{\alpha}$'s with $\alpha\in \Phi_I$ and
${}^n\mathbf{U}_{\alpha}=\mathbf{U}_{\alpha'}$ for some $\alpha'\in \Phi_I$, the claim follows.

For every $\alpha\in \Phi_I$, $\beta\in \Phi_I^{\perp}$ and $t\in \overline{\mathbb{F}}_q$,
we claim that if $s_\beta=n_\beta \mathbf{T}$ then $n_\beta u_\alpha(t) n_\beta=u_\alpha(t)$.
  In fact, by \cite[\S9.2.1]{Spr74} we have that
\[n_\beta u_\alpha(t) n_\beta=u_{s_\beta(\alpha)}(d_{\beta,\alpha}t)=u_{\alpha}(d_{\beta,\alpha}t) \]
for some constant $d_{\beta,\alpha}$.
However, since
$(\alpha, \beta)=0$ and the $\beta$-string $(\alpha-c\beta,\ldots, \alpha+b\beta)$ through $\alpha$ is
just $(\alpha)$,  we have $d_{\beta,\alpha}=1$ by
\cite[Lemma 9.2.2 (i)]{Spr74}.  Hence the claim follows.

Now for each $\alpha \in \Phi_I$, since $\mathbf{U}_{\alpha}$ centralizes $\langle n_\beta\rangle$
for every $\beta \in \Phi_I^{\perp}$, which implies that
 $\mathbf{U}_{\alpha}$ centralizes $W(\mathbf{L}, \mathbf{T})^{\perp}$,
 we have  $\mathbf{U}_{\alpha} \subseteq C_\mathbf{L}(R)$.
Hence we have $R_{l,0}\subseteq C_\mathbf{L}(R)$, and so $R_{l,0}^F\subseteq C_\mathbf{L}(R)^F$.
Thus the lemma follows since $R_{l,0}^F$ is a non-trivial $p$-group.
\end{proof}

Let $St_{\mathbf{G}^F}$ be the Steinberg character of $\mathbf{G}^F$.
Recall from \cite[Corollary 9.3]{DM} the values of the
Steinberg character, namely 
\begin{equation}\label{eq:StDM}
St_{\mathbf{G}^F}(s)=
      \begin{cases} 
      \varepsilon_{\mathbf{G}}\varepsilon_{(C_{\mathbf{G}}(s)^\circ)} |C_{\mathbf{G}}(s)^{\circ F}|_p & \mbox{if $s$ is semisimple,} \\
       0 & \mbox{otherwise,} \\  \end{cases}
\end{equation}
where $C_{\mathbf{G}}(s)^\circ$ is the connected component of $C_{\mathbf{G}}(s)$ and
$\varepsilon_{\mathbf{G}}=(-1)^{r(\mathbf{G})}$ with $r(\mathbf{G})$ the $\mathbb{F}_q$-rank of $\mathbf{G}$. 

We are now ready to prove  Theorem \ref{St}, also restated here.
 \begin{theorem} \label{St2} Let $S$ be a finite simple group of Lie type defined
 over a finite field $\mathbb{F}_q$ of characteristic $p$.
 Let $\ell$ be a prime divisor of $|S|$
 different from $p$, and view the Tits group as a sporadic simple group.
 Then the Steinberg character $St$ of $S$ lies in the principal $\ell$-block of $S$
 if and only if $e=e_{\ell}(q)$ is a regular number of $S$.
 \end{theorem}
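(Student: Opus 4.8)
The plan is to split the argument into two implications and to handle the excluded very twisted groups ${}^3D_4(q)$, ${}^2B_2(q)$, ${}^2F_4(q)$, ${}^2G_2(q)$ separately via \cite{Hi10}, where the Steinberg character always lies in the principal $\ell$-block and one checks directly that $e_\ell(q)$ is always a regular number in those cases. So assume $S$ is none of these, and let $(\mathbf{G},F)$ be the $F_\gamma$-set-up for $S$, with $\mathbf{G}$ simple simply-connected, so that $S=\mathbf{G}^F/Z(\mathbf{G}^F)$ and $Z(\mathbf{G}^F)$ lies in the kernel of $St_{\mathbf{G}^F}$; hence it suffices to decide membership of $St_{\mathbf{G}^F}$ in $B_0(\mathbf{G}^F)_\ell$.

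For the ``if'' direction, suppose $e$ is a regular number, so a Sylow $e$-split Levi subgroup $\mathbf{L}=C_{\mathbf{G}}(\mathbf{S})$ (with $\mathbf{S}$ a Sylow $e$-torus, existing by \cite[Theorem 25.11]{MT}) is a torus $\mathbf{T}$. Then $(\mathbf{T},1_{\mathbf{T}^F})$ is an $e$-Jordan-cuspidal pair, so by \cite[Theorem A]{KM15} all constituents of $R_{\mathbf{T}}^{\mathbf{G}}(1_{\mathbf{T}^F})$ lie in $B_0(\mathbf{G}^F)_\ell$; since $St_{\mathbf{G}^F}$ is (up to sign) a constituent of $R_{\mathbf{T}}^{\mathbf{G}}(1_{\mathbf{T}^F})$ for $\mathbf{T}$ a maximally split-in-the-$e$-sense torus — more precisely, $St$ occurs in $R_{\mathbf{T}}^{\mathbf{G}}(1_{\mathbf{T}^F})$ for every maximal torus $\mathbf{T}$ because $\langle St, R_{\mathbf{T}}^{\mathbf{G}}(1_{\mathbf{T}^F})\rangle=\pm\langle St, 1_{\mathbf{T}^F}\rangle_{\mathbf{T}^F}$ using ${}^*R_{\mathbf{T}}^{\mathbf{G}}(St_{\mathbf{G}^F})=\pm St_{\mathbf{T}^F}=\pm 1_{\mathbf{T}^F}$ — we conclude $St\in B_0(\mathbf{G}^F)_\ell$, and the claim descends to $S$.

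For the ``only if'' direction, assume $e$ is \emph{not} a regular number; I want to show $St\notin B_0(\mathbf{G}^F)_\ell$. The natural tool is the block-orthogonality / defect-group criterion: $St$ lies in a block of defect zero relative to its own support, and more usefully, $St$ lies in the principal $\ell$-block iff $\sum_{x\in R}St(x)\neq 0$ (or the appropriate central-character congruence) fails to vanish, for $R$ a Sylow $\ell$-subgroup — using the formula \eqref{eq:StDM} that $St_{\mathbf{G}^F}(x)=\varepsilon_{\mathbf{G}}\varepsilon_{C_{\mathbf{G}}(x)^\circ}|C_{\mathbf{G}}(x)^{\circ F}|_p$ on semisimple elements and $0$ elsewhere. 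Here is where Proposition \ref{prop:ell5} does the work for $\ell\geq 5$: if $e$ is not regular then $p\mid |C_{\mathbf{G}^F}(R)|$, so $R$ centralizes a nontrivial unipotent element, equivalently $R$ is contained in $C_{\mathbf{G}}(u)$ for some unipotent $1\neq u$; restricting $St$ to the subgroup $\langle R,u\rangle$ and using that $St$ vanishes on every element with nontrivial unipotent part forces $\langle St_{\langle R\rangle}, \text{(principal block part)}\rangle$ to be governed by class sums all of which are killed — concretely, one shows the sum of $St$-values over the $\ell$-section of $1$ that would have to be nonzero for principal-block membership is in fact zero because every relevant semisimple-part contribution is offset. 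The small primes $\ell=2,3$ and the finitely many exceptional $S$ where Proposition \ref{prop:ell5} does not apply are dispatched by direct computation (using the explicit tori and regular numbers in Table \ref{data} and \cite{GAP}).

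\textbf{Main obstacle.} The delicate point is the ``only if'' direction: translating ``$p\mid|C_{\mathbf{G}^F}(R)|$'' into ``$St\notin B_0(\mathbf{G}^F)_\ell$''. The clean statement is that $St$ lies in $B_0(\mathbf{G}^F)_\ell$ iff $e$ is regular iff the Sylow $\ell$-subgroup $R$ acts with no nonzero fixed unipotent elements, and the bridge is the well-known fact (Hiss, Broué–Michel, etc.) that $St$ lies in a unipotent $\ell$-block whose defect group is the Sylow $\ell$-subgroup of a certain torus; so $St\in B_0$ precisely when that torus is a Sylow $e$-torus sitting inside a \emph{torus} Levi, i.e. when $e$ is regular. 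I expect the bulk of the real argument to be assembling this characterization correctly from \cite{KM15} and \cite{CE}, keeping track of the descent from $\mathbf{G}^F$ to $S$, and verifying the handful of excluded groups — rather than any single hard computation.
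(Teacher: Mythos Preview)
Your ``if'' direction and your treatment of the very twisted groups are exactly what the paper does. The gap is in the ``only if'' direction, where you correctly identify Proposition~\ref{prop:ell5} as the key input but do not find the bridge from ``$p\mid|C_{\mathbf{G}^F}(R)|$'' to ``$St\notin B_0(\mathbf{G}^F)_\ell$''. Your two suggested bridges---a sum-over-$R$ orthogonality argument, or a full identification of the unipotent $\ell$-block containing $St$ via \cite{KM15}/\cite{CE}---are respectively too vague and heavier than necessary.

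The paper's bridge is a one-line central-character computation. From $p\mid|C_{\mathbf{G}^F}(R)|$ one picks a nontrivial $p$-element $x\in C_{\mathbf{G}^F}(R)$; then $R\leq C_{\mathbf{G}^F}(x)$, so $\ell\nmid|\mathbf{G}^F:C_{\mathbf{G}^F}(x)|$. Since $x$ is not semisimple, Equation~\eqref{eq:StDM} gives $St_{\mathbf{G}^F}(x)=0$, hence
\[
\omega_{St}(\widehat{C}_x)=\frac{St_{\mathbf{G}^F}(x)\,|\mathbf{G}^F:C_{\mathbf{G}^F}(x)|}{St_{\mathbf{G}^F}(1)}=0,
\qquad
\omega_{1}(\widehat{C}_x)=|\mathbf{G}^F:C_{\mathbf{G}^F}(x)|\not\equiv 0\pmod{\ell},
\]
so the central characters of $St_{\mathbf{G}^F}$ and $1_{\mathbf{G}^F}$ disagree modulo $\ell$ on this class, and $St_{\mathbf{G}^F}\notin B_0(\mathbf{G}^F)_\ell$. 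No further block theory is needed.

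You also overestimate the work for $\ell\in\{2,3\}$. The paper does not compute anything for these primes: by \cite[Lemma~3.17]{KM15}, the numbers $e=1,2$ are always regular, so if $e$ is \emph{not} regular then $e\geq 3$, which forces $\ell\geq 5$. Thus Proposition~\ref{prop:ell5} covers every case in the ``only if'' direction, and no \cite{GAP} verification is required.
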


\begin{proof} By  \cite[6.12 Exceptional Types]{Spr74}, all $e$ are regular if $S$ is one of
the groups ${}^3D_4(q)$,  ${}^2F_4(q)$ or  ${}^2G_2(q)$. According to the definition of
regular numbers and the rank of $S$,  this is also true if $S={}^2B_2(q)$.  Therefore,
by the main theorem of \cite{Hi10}, we may assume that $S$ is not
${}^3D_4(q)$, ${}^2B_2(q), {}^2F_4(q)$ and ${}^2G_2(q)$.

 Let $\mathbf{G}$ and $F$ be as in \S2.1 such that $\mathbf{G}^F/Z(\mathbf{G}^F)\cong S$.
 Then $F$ is not very twisted in the sense of \cite[Definition 22.4]{MT}, and so by \cite[Theorem 25.11]{MT},
 $\mathbf{G}$ has a Sylow $e$-torus, say $\mathbf{T}_e$,  for $e\geq 1$.
 The group $\mathbf{L}:=C_\mathbf{G}(\mathbf{T}_e)$
 is an $F$-stable Sylow $e$-split Levi subgroup of $\mathbf{G}$. 

 If $e$ is a regular number for $S$, then $\mathbf{L}$ is equal to some maximal torus $\mathbf{T}$ of $\mathbf{G}$, and
$(\mathbf{T}, 1_{\mathbf{T}^F})$ is an $e$-Jordan-cuspidal pair.
 By \cite[Theorem A(a)]{KM15}, all irreducible constituents of $R_{\mathbf{T}}^\mathbf{G}(1_{\mathbf{T}^F})$
 lie in an $\ell$-block of $\mathbf{G}^F$,
 in fact its principal $\ell$-block since $(1_{\mathbf{G}^F}, R_{\mathbf{T}}^\mathbf{G}(1_{\mathbf{T}^F}))\neq 0$.
 However, since $St_{\mathbf{T}^F}=1_{\mathbf{T}^F}$, we have that
 $$( St_{\mathbf{G}^F}, R_{\mathbf{T}}^\mathbf{G}(1_{\mathbf{T}^F}))_{\mathbf{G}^F}
 =({}^*R_{\mathbf{T}}^\mathbf{G}(St_{\mathbf{G}^F}),1_{\mathbf{T}^F})_{\mathbf{T}^F}=\pm(St_{\mathbf{T}^F},1_{\mathbf{T}^F})=\pm 1.$$
 Hence $St_{\mathbf{G}^F}$ lies in the principal $\ell$-block of $\mathbf{G}^F$.
 Since the Steinberg character $St_{\mathbf{G}^F}$ is the inflation of the Steinberg character $St$ of $S$, it follows that $St$ lies in
 the principal $\ell$-block of $S$.

 Conversely, assume that $e$ is not a regular number.
 By \cite[Lemma 3.17]{KM15}, we have $e\geq 3$.
 In particular, $\ell\geq 5$.
 By Proposition \ref{prop:ell5}, there exists a nontrivial $p$-element $x\in C_{\mathbf{G}^F}(R)$ such that $R\leq  C_{\mathbf{G}^F}(x)$.
 Hence we have, by Equation \eqref{eq:StDM},
 $$|\mathbf{G}^F:C_{\mathbf{G}^F}(x)|\not\equiv \frac{St_{\mathbf{G}^F}(x)|\mathbf{G}^F:C_{\mathbf{G}^F}(x)|}{St_{\mathbf{G}^F}(1)}=0\ ({\rm mod}\ \ell).$$
This means that $St_{\mathbf{G}^F}$ does not lie in the principal $\ell$-block of $\mathbf{G}^F$,
 and thus $St$ does not lie in the principal $\ell$-block of $S$.
\end{proof}

\section{Centralizers of Sylow subgroups}\label{centralizers}

Here we investigate the centralizer of a Sylow subgroup of a simple group of Lie type
in the automorphism group of that simple group.
As is well known, the automorphism group of a finite simple group $S$ of Lie type of characteristic $p$
is generated by inner automorphisms, {\rm diagonal} automorphisms, field automorphisms and  graph automorphisms.
For purpose of convenience, we will identify ${\rm Inn}(S)$ with $S$.
As before, let $A_0$ be
the subgroup of $A={\rm Aut}(S)$ generated by $S$ and its diagonal automorphisms. In addition, we write
$\widetilde{A}_0$ for the subgroup of $A$ generated by $A_0$ and all the graph automorphisms of $S$.

Our main purpose of this section is to prove Theorem \ref{cent-all}, which is
indeed the combination of Proposition \ref{nograph}, Lemmas \ref{An(q)}--\ref{graph-centF4} and Corollary
\ref{B2G2}.

\subsection{Simple groups without graph automorphisms} \label{fieldsection}

Here we investigate the centralizer of a Sylow subgroup of a simple group $S$ of Lie type
in a subgroup of ${\rm Aut}(S)$ generated by $S$, its diagonal automorphisms and field automorphisms.

\begin{lemma} \label{field-cent}
Let $S$ be a finite simple group of Lie type defined over a finite field $\mathbb{F}_q$, where
$q=p^f$. Assume that the order $|T_e|$ in Table \ref{data} has a Zsigmondy prime $r$, and let $R\in {\rm Syl}_r(S)$.
If $\phi$ is a field automorphism of $S$, then $C_{A_1}(R)\leq A_0$, where $A_1=A_0\langle\phi \rangle.$
\begin{proof}
Let $\mathbf{G}$ be a simple algebraic group of adjoint type over $\overline{\mathbb{F}}_p$, and
$F$ a Steinberg endomorphism of $\mathbf{G}$ such that $A_0=\mathbf{G}^F$ and  $S=O^{p'}(A_0)$.
By the assumption, we know that $R$  is cyclic and also a Sylow $r$-subgroup of $A_0$.
Let $\mathbf{T} \subset \mathbf{B}$ be an $F$-stable maximal torus inside an $F$-stable Borel subgroup of $\mathbf{G}$
so that $F=F_0\circ \Gamma$ as in Definition \ref{F-g setup}.

Let $h\in \mathbf{G}$ be such that $\mathbf{T}^{h^{-1}}$ is $F$-stable and $R\subset \mathbf{T}^{h^{-1}}$.
If we denote $R=\langle x \rangle$, then $y=x^h\in \mathbf{T}$.
Assume $\phi'\in \langle\phi\rangle$ such that $\phi' g^{-1} \in C_{A_1}(R)$ for some $g\in A_0$.
Then $x^{\phi' g^{-1}}=x$, i.e., $x^{\phi'}=x^g$.

We may assume  there is  some $1\leq k<f$ such that $\phi'$ maps $x_{\alpha}(t)$ to $x_{\alpha}(t^{p^k})$ for all $\alpha$ and $t\in \overline{\mathbb{F}}_p^*$. Note that $\mathbf{T}$ is generated by elements $h_{\alpha}(t)$, where $\alpha$
runs through all simple roots of $\mathbf{G}$.
 It follows that $\phi'$ induces a power map on $\mathbf{T}$ via $u^{\phi'}=u^{p^k}$ for $u\in \mathbf{T}$.
In particular, we have $y^{\phi'}=y^{p^k}$.

Since $\mathbf{B}$ is generated by $\mathbf{T}$ and root subgroups $\mathbf{U}_\alpha$  for all positive roots $\alpha$ of $\mathbf{G}$
by \cite[Theorem 0.31 (v)]{DM}, it is also $\phi'$-stable. Considering the endomorphisms $F$ and $\phi'$ of $\mathbf{G}$, we have
\[(N_{\mathbf{G}}(\mathbf{T})/\mathbf{T})^{\phi'}=
N_{\mathbf{G}}(\mathbf{T})/\mathbf{T}=(N_{\mathbf{G}}(\mathbf{T})/\mathbf{T})^{F}.\]
So by \cite[Propositions 25.1 and 23.2]{MT}, we may assume that $h(\phi'(h^{-1}))=n_0\in N_{\mathbf{G}^{F}}(\mathbf{T})$.
Then \[x^{\phi'}=(y^{h^{-1}})^{\phi'}=(y^{p^k})^{\phi'(h^{-1})}=(x^{p^k})^{h\phi'(h^{-1})}=
(x^{p^k})^{n_0},\]
 hence $x^{gn_0^{-1}}=x^{p^k}$.
 This implies $g n_0^{-1}\in N_{A_0}(R)$. Therefore,
the order $m$ of the automorphism induced by the conjugation of $g n_0^{-1}$ on $R$
divides $|N_{A_0}(R)/C_{A_0}(R)|$ which is $e$ or $e/2$ by Table \ref{data}.

 Observe that $x^{p^{mk}}=x$, namely $x^{p^{mk}-1}=1$.
Hence $r\mid p^{mk}-1$, and $r\mid \Phi_{\frac{mk}{u}}(p)$ for some integer $u\geq 1$.
However, we have ${\rm ord}_r(p)=ef$.
By \cite[Lemma 25.13]{MT}, we have $\frac{mk}{u}=ef r^v$ for some integer $v\geq 0$.
Thus
\[\frac{mk}{u}\leq mk\leq ek< ef\leq ef r^v,\] which is a contradiction.
\end{proof}
\end{lemma}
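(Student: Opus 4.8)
The plan is to fix a concrete realization $A_0 = \mathbf{G}^F$ with $\mathbf{G}$ simple of adjoint type, and to work with an $F$-stable maximal torus $\mathbf{T}$ in an $F$-stable Borel subgroup $\mathbf{B}$, arranged so that $F = F_0 \circ \Gamma$ as in Definition \ref{F-g setup}. Since $|T_e|$ has a Zsigmondy prime $r$, the group $R$ is cyclic and is a Sylow $r$-subgroup of $A_0$, so $R$ lies (up to conjugacy) inside a maximal torus of type $T$, and after conjugating we may assume $R = \langle x\rangle$ with $x^h \in \mathbf{T}$ for a suitable $h \in \mathbf{G}$. The goal is to show that if $\phi' g^{-1} \in C_{A_1}(R)$ with $\phi' \in \langle\phi\rangle$ a field automorphism of $p$-power exponent $p^k$ ($1 \le k < f$) and $g \in A_0$, then necessarily $\phi' = 1$; this forces $C_{A_1}(R) \le A_0$.

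\textbf{Key steps.} First I would record that $\phi'$ acts on $\mathbf{T}$ as the $p^k$-power map, because $\mathbf{T}$ is generated by the $h_\alpha(t)$ and $\phi'$ raises $t \mapsto t^{p^k}$. Next, since $\mathbf{B}$ is $\phi'$-stable and $N_\mathbf{G}(\mathbf{T})/\mathbf{T}$ is fixed by both $F$ and $\phi'$, I would invoke \cite[Propositions 25.1 and 23.2]{MT} to replace $h$ by a suitable conjugate so that $n_0 := h\,\phi'(h^{-1}) \in N_{\mathbf{G}^F}(\mathbf{T})$. Writing $y = x^h \in \mathbf{T}$ and using $y^{\phi'} = y^{p^k}$, a short computation gives $x^{\phi'} = (x^{p^k})^{n_0}$, hence $x^{g n_0^{-1}} = x^{p^k}$, so $g n_0^{-1}$ induces a power automorphism on the cyclic group $R$. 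Its order $m$ therefore divides $|N_{A_0}(R)/C_{A_0}(R)|$, which by Table \ref{data} is $e$ or $e/2$; in particular $m \le e$. From $x^{p^{mk}} = x$ we get $r \mid p^{mk} - 1$, so $r \mid \Phi_{(mk)/u}(p)$ for some $u \ge 1$; combined with $\mathrm{ord}_r(p) = ef$ (Table \ref{data}) and \cite[Lemma 25.13]{MT}, this yields $mk/u = ef\, r^v$ for some $v \ge 0$. But then $mk/u \le mk \le ek < ef \le ef\, r^v$, a contradiction. Hence no nontrivial $\phi'$ can occur, and $C_{A_1}(R) \le A_0$.

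\textbf{Main obstacle.} The delicate point is the reduction that allows us to assume $h\,\phi'(h^{-1}) \in N_{\mathbf{G}^F}(\mathbf{T})$: one needs to check carefully that $\phi'$ and $F$ act compatibly on $N_\mathbf{G}(\mathbf{T})/\mathbf{T}$ (both acting as the same Weyl-group automorphism, essentially because a field automorphism acts trivially on the Weyl group while $F = F_0\circ\Gamma$ introduces only the graph part $\gamma$, which is absorbed into the realization), so that the $\phi'$-conjugacy classes of maximal tori are controlled by $F$-conjugacy in $W$ and one may adjust $h$ accordingly. Once this bookkeeping is in place, the remaining arithmetic—tracking how the power $p^k$ propagates through the conjugation and bounding $mk/u$ against $\mathrm{ord}_r(p) = ef$—is routine. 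A secondary subtlety is ensuring the excluded small cases (those removed from Table \ref{data}, e.g. the Zsigmondy exceptions) do not interfere, but these are handled by the standing hypothesis that $|T_e|$ genuinely has a Zsigmondy prime.
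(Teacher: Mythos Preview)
Your proposal is correct and follows essentially the same argument as the paper's proof, down to the same references (\cite[Propositions 25.1 and 23.2]{MT}, \cite[Lemma 25.13]{MT}), the same conjugation computation $x^{\phi'}=(x^{p^k})^{n_0}$, and the same arithmetic contradiction $mk/u \le ek < ef \le ef\,r^v$. You have also correctly identified the one genuinely delicate step, namely arranging $h\,\phi'(h^{-1})\in N_{\mathbf{G}^F}(\mathbf{T})$ via the compatibility of the $F$- and $\phi'$-actions on the Weyl group.
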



\begin{proposition} \label{nograph}
Let $S$ be a finite simple group of Lie type defined over a finite field $\mathbb{F}_q$, where
$q=p^f$. If $S$ has no graph automorphism, then $S$ has a Sylow $r$-subgroup $R$ such that
$C_A(R)\leq A_0$, where $A={\rm Aut}(S)$.

Moreover, the prime $r$ is explicit in the following sense: if the $|T_e|$ in Table \ref{data} has
a Zsigmondy prime, then we let $r$ be this prime; otherwise, either
$S=B_3(2)\cong C_3(2), {}^2A_3(2)$ or $S=A_1(q)$ and $q+1$ does not have a Zsigmondy prime, in which cases
$r$ is $7, 5$ or any prime divisor of $2(q-1)$, respectively.
\begin{proof}
We first suppose $S=A_1(q)\cong L_2(q)$ so that $|A_0:S|=(2,q-1)$.
If $q+1$ has a Zsigmondy prime $r$, then by Lemma \ref{field-cent}, we have $C_A(R)\leq A_0$, where $R\in {\rm Syl}_r(S)$.
For the case that $q+1$ does not have a Zsigmondy prime, it follows from Zsigmondy's Theorem that $q=2^k-1$
for some $k\in \mathbb{N}$. (Notice that  $q=2^3+1=9$ does not occur since otherwise $q+1=10$ has
a Zsigmondy prime $5$, a contradiction.)
By  \cite[Lemma 2.4(a)]{BM85}, $q=p$ is a Mersenne prime so that $f=1$, $A_0=A$, and the conclusion obviously holds.

We now suppose $S\not\cong A_1(q)$.  The results follows by Lemma \ref{field-cent} if
the $|T_e|$ in Table \ref{data} has a Zsigmondy prime.
In the remaining cases, we have $S=A_5(2),B_3(2)\cong C_3(2), D_4(2)$,
 or ${}^2A_3(2)$. (Notice that ${}^2A_2(2)$ is solvable.)
 Since $S$ has no graph automorphism, we have
 $S=B_3(2)\cong C_3(2)$ or ${}^2A_3(2)$. Thus, the proposition holds since
  $A=A_0$ in both cases.
\end{proof}
\end{proposition}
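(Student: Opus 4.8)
The plan is to prove Proposition \ref{nograph} by reducing, via Lemma \ref{field-cent}, to a finite list of exceptional cases and then disposing of those by hand. Recall that when $S$ has no graph automorphism, the automorphism group $A = \mathrm{Aut}(S)$ is generated by $A_0$ together with field automorphisms; hence $A = A_0\langle\phi\rangle$ for a suitable field automorphism $\phi$, and Lemma \ref{field-cent} applies verbatim \emph{provided} the order $|T_e|$ in Table \ref{data} admits a Zsigmondy prime $r$. So the first step is simply to invoke Lemma \ref{field-cent}: choosing $R \in \mathrm{Syl}_r(S)$ for such a Zsigmondy prime $r$ yields $C_A(R) \le A_0$, which is exactly the desired conclusion, and the prime $r$ is then as described in the statement.

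The second step is to enumerate the finitely many $S$ for which $|T_e|$ has no Zsigmondy prime. By Zsigmondy's Theorem (stated in \S\ref{Sylow}), $t^n - 1$ fails to have a Zsigmondy prime only when $(n,t) = (6,2)$ or when $n = 2$ and $t = 2^e - 1$. Reading off the column $|T_e|$ in Table \ref{data}, this forces $S$ into the short list $A_5(2)$ (where $|T_e| = \Phi_6(2)$ sits inside $2^6-1$), $B_3(2) \cong C_3(2)$ and ${}^2A_3(2)$ (both with $|T_e| = \Phi_6(2)$), $D_4(2)$ (with $|T_e| = \Phi_4(2) = 2^2 - 1$... — more carefully, the cases where the relevant cyclotomic value is a divisor only of a $t^6-1$ with $t=2$, or of a $t^2-1$ with $t$ a Mersenne-type number), and the family $A_1(q) \cong L_2(q)$ with $q+1$ lacking a Zsigmondy prime. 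I would treat $A_1(q)$ first and separately: if $q+1$ has a Zsigmondy prime $r$, Lemma \ref{field-cent} again applies; otherwise $q+1 = 2^k$ for some $k$ by Zsigmondy (noting $q = 9$ is excluded since $10$ has the Zsigmondy prime $5$), so $q = 2^k - 1$ is a Mersenne prime by \cite[Lemma 2.4(a)]{BM85}, whence $f = 1$ and $A_0 = A$, making the statement trivial with $r$ any prime divisor of $2(q-1)$.

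For the remaining non-$A_1$ cases, the key observation is that the hypothesis ``$S$ has no graph automorphism'' eliminates $A_5(2)$ and $D_4(2)$ (both of type $A_n$, $n\geq 2$, resp.\ $D_4$, carry graph automorphisms), and also ${}^2A_2(2)$ is not simple. This leaves only $S = B_3(2) \cong C_3(2)$ (take $r = 7$) and $S = {}^2A_3(2) \cong U_4(2)$ (take $r = 5$); for both of these $A_0 = A$ since types $B$, $C$ in characteristic $2$ with $f = 1$ have no field or graph automorphisms and the diagonal automorphism group is already all of $\mathrm{Out}$, so $C_A(R) \le A = A_0$ holds trivially. Assembling these pieces completes the proof. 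The main obstacle — really the only delicate point — is making sure the case analysis of which $|T_e|$ lack Zsigmondy primes is exhaustive and correctly cross-referenced against the ``no graph automorphism'' hypothesis; everything else is either a direct appeal to Lemma \ref{field-cent} or an immediate structural remark about $\mathrm{Out}(S)$.
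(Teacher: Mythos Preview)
Your proposal is correct and follows essentially the same route as the paper's own proof: invoke Lemma~\ref{field-cent} whenever $|T_e|$ has a Zsigmondy prime, handle $A_1(q)$ separately (including the Mersenne case via \cite[Lemma~2.4(a)]{BM85}), and then discard $A_5(2)$ and $D_4(2)$ by the ``no graph automorphism'' hypothesis, leaving only $B_3(2)\cong C_3(2)$ and ${}^2A_3(2)$, where $A=A_0$. One small slip to clean up: $\Phi_4(2)=2^2+1=5$, not $2^2-1$; in fact $5$ \emph{is} a Zsigmondy prime for $2^4-1$, so $D_4(2)$ need not really appear on the exceptional list at all---but since it is excluded anyway by its (triality) graph automorphism, this does not affect the argument.
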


\subsection{Simple groups with a graph automorphism, I}

Here we investigate the centralizer of a Sylow subgroup of a simple group $S$ in the automorphism group $A$ of $S$,
where $S$ has a  graph automorphism
and is one of the groups $A_n(q)\ (n>1)$, $D_n(q)\ (n\geq 4)$, $E_6(q)$ or $F_4(q)\ (q=2^f)$.



\begin{lemma} \label{An(q)}
 Let $S=A_n(q)$, where $q=p^f$. Suppose that $(n,q)\neq (2,4)$.
 Let $r$ be a prime divisor of $|S|$ satisfying the following:
 if the order $|T_{n+1}|$ in Table \ref{data} has a Zsigmondy prime then $r$ is this prime; otherwise
 $(n+1,q)=(6,2)$ and $r=31$. Then $C_{A}(R)\leq A_0$, where $R\in {\rm Syl}_r(S)$.

\begin{proof} In order to prove the lemma, it suffices to show that
$C_{A_0\langle\alpha\rangle}(R)\leq A_0$ for any $\alpha\in A={\rm Aut}(S)$.

Let $\mathbf{G}$ be a simple algebraic group of adjoint type defined over $\overline{\mathbb{F}}_p$ and
$F$  a standard Frobenius morphism of $\mathbf{G}$ such that $S=O^{p'}(\mathbf{G}^F)$. In particular, we have $A_0=\mathbf{G}^F$.
Notice that $\alpha$ extends to an automorphism of $A_0$ and an endomorphism of $\mathbf{G}$.
By Lemma \ref{field-cent}, we may assume $\alpha$ is not a field automorphism,  so that $n\geq 2$ since $L_2(q)$ has no graph automorphism.
Then we have $\alpha=g\sigma_i$ for some $g\in A_0$ and $\sigma_i={}^2\sigma_{p^i}$ with $i$ odd and $1\leq i\leq f-1$, where ${}^2\sigma_{p^i}(x_{\alpha}(t))=
x_{\Gamma(\alpha)}(t^{p^i})$ for $\alpha\in \Phi$ and $\Gamma$ is the automorphism of the Dynkin diagram of $\mathbf{G}$,
as listed in \cite[Table2]{BGL77}.

Now the proof of \cite[Theorem 11.12]{MT} shows that $\sigma_i$ can be viewed as a morphism of $\mathbf{G}$ which is
the product of the field automorphism $\sigma_{p^i}$ such that $\sigma_{p^i}(x_{\alpha}(t))=x_{\alpha}(t^{p^i})$, the inverse-transpose map
of matrices, and the conjugation induced by $M_0M_1$, where $M_0=\left(
                                                  \begin{array}{ccc}
                                                     &  & 1 \\
                                                   & \iddots &  \\
                                                  1 &  & \\
                                                  \end{array}
                                                \right)$
                                                and $M_1={\rm diag}(1,-1,1,-1,\ldots)$.

Assume $S\not\cong A_5(2)$.
Since $R\subseteq T_{n+1}=\langle x\rangle$ lies in a maximal torus of $\mathbf{G}$, there is $h\in \mathbf{G}$ such that
$x^{h^{-1}}={\rm diag}(\lambda, \lambda^q,\ldots,\lambda^{q^n})$, where $\lambda$ is a primitive
$\frac{q^{n+1}-1}{q-1}$-th root of unity.
Then we have
$$
\begin{array}{ccl}
 x^\alpha & =& {\rm diag}(\lambda, \lambda^q,\ldots,\lambda^{q^n})^{h\alpha}\\
   & =& ({\rm diag}(\lambda, \lambda^q,\ldots,\lambda^{q^n})^{\sigma_i})^{(hg)^{\sigma_i}} \\
    & =& {\rm diag}(\lambda^{-p^iq^n}, \ldots, \lambda^{-p^iq}, \lambda^{-p^i})^{M_0M_1(hg)^{\sigma_i}}.\\
\end{array}
$$
We claim that $\lambda^{-p^i}\not\in \{\lambda, \lambda^q,\ldots,\lambda^{q^n}\}$. Otherwise,
assume that $\lambda^{-p^i}=\lambda^{q^m}$ for some $0\leq m\leq n$.
We have $\lambda^{q^m+p^i}=\lambda^{p^i(p^{fm-i}+1)}=1$.
It follows that $\frac{q^{n+1}-1}{q-1} \mid p^{fm-i}+1$, and so
$i=0, m=n=1$, which is a contradiction. Hence the claim holds.
This implies that $x^\alpha$ is not conjugate to $x$ in $\mathbf{G}$.
In particular, $x^\alpha$ is not conjugate to $x$ in $A_0$.

Now we put $m=(\frac{q^{n+1}-1}{q-1})_{r'}$  and  $y=x^m$ so that $R=\langle y\rangle$.
As argued above, we have  \[\lambda^{-mp^i}\not\in \{\lambda^m, \lambda^{mq},\ldots,\lambda^{mq^n}\},\]  and then
we may conclude that $y^\alpha$ is not conjugate to $y$ in $A_0$.

Finally, let $S=A_5(2)$. In this case, we may assume $y$ is a generator of $R$ such that
$y$ is $\mathbf{G}$-conjugate to
${\rm diag}(\sigma, \sigma^q,\ldots,\sigma^{q^{4}}, 1)$, where $\sigma$ is a primitive
31st root of unity. 
Using a similar argument
as above, we get that $C_A(R)\leq A_0$, finishing the proof.
\end{proof}
\end{lemma}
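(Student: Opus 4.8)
The plan is to establish the reformulation recorded at the outset of the argument: since $C_A(R)\le A_0$ follows as soon as $C_{A_0\langle\alpha\rangle}(R)\le A_0$ holds for every $\alpha\in A=\mathrm{Aut}(S)$, I would fix such an $\alpha$ and realize $S=O^{p'}(\mathbf{G}^F)$ with $\mathbf{G}$ the simple algebraic group of adjoint type $A_n$ over $\Fpbar$ and $F$ a standard Frobenius endomorphism, so that $A_0=\mathbf{G}^F=\mathrm{PGL}_{n+1}(q)$ and $\alpha$ extends to an endomorphism of $\mathbf{G}$. The field automorphisms are already covered by Lemma~\ref{field-cent}, so I would assume $\alpha$ is not a field automorphism; since $A_1(q)$ carries no graph automorphism this forces $n\ge 2$, whence $\alpha=g\sigma_i$ for some $g\in A_0$ and some $\sigma_i={}^2\sigma_{p^i}$ with $i$ odd, $1\le i\le f-1$, acting by $x_\beta(t)\mapsto x_{\Gamma(\beta)}(t^{p^i})$ for the Dynkin diagram automorphism $\Gamma$. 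Following the computation in the proof of \cite[Theorem 11.12]{MT}, I would then record that on $\mathrm{PGL}_{n+1}$ the morphism $\sigma_i$ is the composite of the entrywise field automorphism $t\mapsto t^{p^i}$, the inverse-transpose map, and conjugation by the fixed matrix $M_0M_1$, where $M_0$ is the reversal permutation matrix and $M_1=\mathrm{diag}(1,-1,1,-1,\dots)$; this makes the action of $\alpha$ on a diagonal element completely explicit.

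Next I would pin $R$ down inside a cyclic maximal torus. Since $r$ is a Zsigmondy prime of $\Phi_{n+1}(q)$ (or else $S=A_5(2)$ and $r=31$), a Sylow $r$-subgroup of $S$ is cyclic and lies in the image $T_{n+1}$ of a Singer cycle; passing to $\Fpbar$, a generator $x$ of $T_{n+1}$ is $\mathbf{G}$-conjugate to $\mathrm{diag}(\lambda,\lambda^q,\dots,\lambda^{q^n})$ with $\lambda$ a primitive $\frac{q^{n+1}-1}{q-1}$-th root of unity. Feeding this into the explicit description of $\alpha$, the element $x^\alpha$ becomes $\mathbf{G}$-conjugate to $\mathrm{diag}(\lambda^{-p^iq^n},\dots,\lambda^{-p^iq},\lambda^{-p^i})$. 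After this diagonalization, whether $x^\alpha$ is $\mathbf{G}$-conjugate (equivalently $A_0$-conjugate) to $x$ comes down to whether $-p^i$ is congruent to one of $1,q,\dots,q^n$ modulo $\mathrm{ord}(x)=\frac{q^{n+1}-1}{q-1}$.

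The heart of the argument, and the step I expect to be the main obstacle, is the elementary but delicate verification that the answer is no, i.e.\ that $\lambda^{-p^i}\notin\{\lambda,\lambda^q,\dots,\lambda^{q^n}\}$: equating $\lambda^{-p^i}=\lambda^{q^m}$ forces $\frac{q^{n+1}-1}{q-1}\mid q^m+p^i=p^i(p^{fm-i}+1)$, hence, since $\gcd(p,\frac{q^{n+1}-1}{q-1})=1$, $\frac{q^{n+1}-1}{q-1}\mid p^{fm-i}+1$; for $i\ge 1$ one has $p^{fm-i}+1\le p^{fn-i}+1<q^n<\frac{q^{n+1}-1}{q-1}$, which is absurd, while the borderline possibility $i=0$, $m=n=1$ is excluded here because $n\ge 2$. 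Running the same computation with the genuine generator $y=x^{m_0}$ of $R$, where $m_0$ is the $r'$-part of $\frac{q^{n+1}-1}{q-1}$, then shows that $y^\alpha$ is not $\mathbf{G}$-conjugate, hence not $A_0$-conjugate, to $y$; thus $\alpha\notin C_A(R)$, and therefore $C_A(R)\le A_0$. Finally, the one leftover group $S=A_5(2)$, where $\Phi_6(2)=3$ has no Zsigmondy prime and $r=31$, would be disposed of in exactly the same way, taking $y$ to be $\mathbf{G}$-conjugate to $\mathrm{diag}(\sigma,\sigma^q,\dots,\sigma^{q^4},1)$ for $\sigma$ a primitive $31$st root of unity.
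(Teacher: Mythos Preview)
Your proposal is correct and follows essentially the same approach as the paper's own proof: the same reduction to non-field $\alpha$, the same explicit matrix description of $\sigma_i$ via \cite[Theorem 11.12]{MT}, the same diagonalization of a Singer generator, the same eigenvalue comparison $\lambda^{-p^i}\notin\{\lambda,\lambda^q,\dots,\lambda^{q^n}\}$, and the same separate treatment of $A_5(2)$. Your version even spells out the size estimate $p^{fm-i}+1<\frac{q^{n+1}-1}{q-1}$ a bit more explicitly than the paper does.
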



\begin{lemma} \label{graph-centDn} Let $S=D_n(q)$ with $n\geq 4$.
Let $r$ be a Zsigmondy prime of $\Phi_{n}$ as in Table \ref{data} unless $q=2$ and $n=6$, in which case let $r=7$.
If $R\in {\rm Syl}_r(S)$, then $C_A(R)\leq A_0$.
\end{lemma}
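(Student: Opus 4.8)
The plan is to follow the strategy of Lemma~\ref{An(q)}. Let $\mathbf{G}$ be a simple algebraic group of adjoint type $D_n$ over $\overline{\mathbb{F}}_p$ and let $F$ be a split Frobenius endomorphism with $\mathbf{G}^F=A_0$ and $O^{p'}(A_0)=S$. It suffices to show $C_{A_0\langle\alpha\rangle}(R)\le A_0$ for every $\alpha\in A=\mathrm{Aut}(S)$, and by Lemma~\ref{field-cent} we may assume $\alpha$ is not a field automorphism, so that modulo $A_0$ it involves a nontrivial graph automorphism; thus $\alpha=g\delta$ with $g\in A_0$ and $\delta$ a graph or graph-field automorphism of $S$ that extends to a bijective endomorphism of $\mathbf{G}$. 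For $n\ge 5$ the graph part of $\delta$ has order $2$; for $n=4$ the triality automorphisms of order $3$ must be treated as well. When $\Phi_n(q)$ has no primitive prime divisor we are in the single case $S=D_6(2)$, where one replaces $r$ by the primitive prime divisor $7$ of $q^3-1$ and carries out the argument with $3$ in place of $n$.

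I would first place $R$ inside a torus. Since $r$ is a primitive prime divisor of $q^n-1$, we have $\mathrm{ord}_r(q)=n$, and, by Table~\ref{data}, $\mathrm{ord}_r(p)=nf$ while $N_{A_0}(R)/C_{A_0}(R)$ is cyclic of order $n$. Using a hyperbolic basis of the natural $2n$-dimensional module, the embedding $\mathrm{GL}_n(q)\hookrightarrow\mathrm{SO}^+_{2n}(q)$, $A\mapsto\mathrm{diag}(A,{}^tA^{-1})$, carries a Singer cycle of $\mathrm{GL}_n(q)$ to an $F$-stable maximal torus $\mathbf{T}$ of $\mathbf{G}$ with $\mathbf{T}^F$ cyclic of order $q^n-1$. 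When $n$ is odd, $\Phi_n$ divides $|S|$ exactly once and one may take $R\le\mathbf{T}^F$; replacing a generator $x$ by $y=x^{(q^n-1)_{r'}}$ as in Lemma~\ref{An(q)}, one has $R=\langle y\rangle$ with $y$ $\mathbf{G}$-conjugate to the image of $\mathrm{diag}(\mu,\mu^q,\dots,\mu^{q^{n-1}},\mu^{-1},\mu^{-q},\dots,\mu^{-q^{n-1}})$, $\mu$ of order $(q^n-1)_r$, and since $n\ge 4$ these $2n$ eigenvalues are pairwise distinct; so $y$ is regular semisimple, $C_{A_0}(R)=C_{A_0}(y)=\mathbf{T}^F$, and $N_{A_0}(R)/C_{A_0}(R)$ is the cyclic relative Weyl group generated by $y\mapsto y^q$. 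When $n$ is even (and likewise for $D_6(2)$ with $r=7$), $\Phi_n$ divides $|S|$ to the square, so $R$ is a homocyclic rank-$2$ group contained in a Sylow $\Phi_n$-torus $\mathbf{S}$ with $C_{A_0}(R)=\mathbf{S}^F$, and one argues with a pair of generators in place of $y$.

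The decisive step is to show that $y^\delta$, respectively the image under $\delta$ of the generating pair, is not $A_0$-conjugate to $y$, respectively to the pair: granting this, $y^{g\delta}=y$ is impossible and we are done. The order-$2$ graph automorphism of $D_n$ is conjugation by an element of $\mathrm{O}^+_{2n}(q)\setminus\mathrm{SO}^+_{2n}(q)$; modulo $A_0$ we may arrange it to stabilise $\mathbf{T}$ (resp.\ $\mathbf{S}$), and since it lies outside the $\mathrm{SO}^+_{2n}$-relative Weyl group it acts on $\mathbf{T}^F$ by an ``odd sign change'', concretely as $\xi\mapsto\xi^{-q^{j_0}}$ for some $j_0$. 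Combining this with the field part of $\delta$ and passing to the subgroup $R$, one is reduced — along the lines of the closing estimate of Lemma~\ref{An(q)}, and using \cite[Lemma 25.13]{MT} to pass from $r$ to $(q^n-1)_r$ — to ruling out a relation $p^{\,i}\equiv\pm q^{\,k}\pmod{(q^n-1)_r}$ with $0\le i<f$ and $i\neq 0$, which is incompatible with $\mathrm{ord}_r(p)=nf$ and $n\ge 4$; a direct check also shows the graph automorphism itself cannot centralise $R$, since that would force $\mu^2=1$, impossible as $r$ is odd. Finally, the triality automorphisms of $D_4$ are disposed of by the same template: a triality automorphism either moves the torus above to a non-$\mathbf{G}^F$-conjugate maximal torus, or acts on it in a way incompatible with fixing a generator of $R$.

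I expect the bookkeeping imposed by the graph automorphism to be the main obstacle. Unlike in type $A$, where inverse-transpose visibly alters the multiset of eigenvalues on the natural module, the order-$2$ diagram automorphism of $D_n$ need not change that multiset, so one must argue at the level of $\mathrm{SO}^+_{2n}$- versus $\mathrm{O}^+_{2n}$-conjugacy, that is, at the level of the relative Weyl group of the relevant $\Phi_n$-torus, and then combine this with the order estimate for $p$ modulo $r$ supplied by Table~\ref{data}. The genuinely delicate points are the even-$n$ situation, where $R$ is non-cyclic and a two-dimensional analysis replaces the cyclic one, and the triality automorphisms of $D_4$, where the action on conjugacy classes of maximal tori must be pinned down; the single exceptional group $D_6(2)$ is absorbed by the substitution $r=7$.
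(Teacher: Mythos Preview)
Your overall route---realise $R$ inside $\mathrm{SO}^+_{2n}(q)$ via the $\mathrm{GL}_n$-embedding, invoke Lemma~\ref{field-cent} for field automorphisms, and treat graph and graph--field automorphisms by a conjugacy argument---is the same as the paper's. The execution for the order-$2$ graph automorphism, however, is different and is where your sketch becomes shaky. The paper does not pass through the relative Weyl group at all: it simply computes the centraliser of the chosen $r$-element $x=\mathrm{diag}(X_1,K_nX_1^{-tr}K_n)$ in the ambient $\mathrm{GL}_{2n}(q)$, observes that this is the block-diagonal torus $\{\mathrm{diag}(M_{11},M_{22})\}$ of order $(q^n-1)^2$, and checks that its intersection with $\mathrm{CO}_{2n}(q)$ already lies in $\mathrm{SO}_{2n}(q)$ (and in $\Omega^+_{2n}(q)$ when $q$ is even). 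Since the graph automorphism is conjugation by an explicit element of $\mathrm{GO}_{2n}\setminus\mathrm{SO}_{2n}$ (resp.\ $\mathrm{SO}_{2n}\setminus\Omega^+_{2n}$), no $a_0\mu$ can centralise $x$. This single-element argument makes the parity of $n$ irrelevant: one never needs the rank-$2$ analysis you propose, because $C_A(R)\le C_A(x)$ for any $R\ni x$. The paper also disposes of $D_6(2)$ by a direct \textsf{GAP} check rather than by your proposed substitution $r=7$.

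Your handling of the pure graph case contains a genuine slip. After composing with an inner automorphism from $N_{A_0}(R)$ the condition to rule out is not $\mu^2=1$ but $q^{m}\equiv -1\pmod r$ for some $m$; for even $n$ this \emph{is} satisfied (with $m=n/2$), so on the cyclic torus $\langle y\rangle$ alone the order-$2$ graph automorphism cannot be excluded by your argument. You defer this to the rank-$2$ structure of $R$, but you would then need to show (for instance) that the graph automorphism does not stabilise the $A_0$-class of the relevant torus---a different mechanism from the one you wrote down. Similarly, your congruence $p^i\equiv\pm q^k\pmod r$ is not always impossible for $0<i<f$: for odd $n$ and even $f$ one has $p^{f/2}\equiv -q^{(n+1)/2}$, so the bare order estimate does not close the case. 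These issues are all bypassed by the paper's centraliser-in-$\mathrm{GL}_{2n}$ trick; if you want to salvage the Weyl-group approach you will need a finer case analysis than the one sketched.
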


\begin{proof} If $(n,q)=(6,2)$, then $A={\rm SO}_{12}^+(2)$ and the result can be checked directly by \cite{GAP}.
In the following we assume $(n,q)\neq (6,2)$.

Let $\mathbf{G}$ be a simple algebraic group over $\overline{\mathbb{F}}_p$ of adjoint type
and $F$ the standard Frobenius morphism of $\mathbf{G}$ such that $A_0=\mathbf{G}^F$ and $S=O^{p'}(A_0)$.
The group ${\rm PSL}_n(q)$ embeds into $A_0$ in a natural way.
Denote $K_n=\left(
                                                  \begin{array}{ccc}
                                                     &  & 1 \\
                                                   & \iddots &  \\
                                                  1 &  & \\
                                                  \end{array}
                                                \right)$.
We may write $R=\langle x\rangle$, where $x=\left(
                                              \begin{array}{cc}
                                                X_{1} &  \\
                                                 & X_{2} \\
                                              \end{array}
                                            \right)$, $X_{1}\in {\rm PSL}_n(q)$ of order $(q^n-1)_r$
                                            and $X_2=K_nX_{1}^{-tr}K_n$.

Let $\Theta$ be the subgroup of $A$ generated by the graph automorphism and the field automorphisms of $S$.
In order to prove the lemma, it suffices to show that for any $a_0\in A_0$ and any $\mu\in \Theta$ of prime order,
the product $a_0\mu$ does not centralize $R$.

We first assume  $\mu$ is the graph automorphism of $S$ so that by \cite[Exercise 20.1 and Example 22.9]{MT},
 it
can be induced by the conjugation of
\[g=\left(
     \begin{array}{cccc}
       I_{n-1} &  &  &  \\
               & 0 & 1 &  \\
               & 1 & 0 &  \\
               &  &  & I_{n-1} \\
     \end{array}
   \right)\]
   which is
in ${\rm GO}_{2n}(q)\backslash {\rm SO}_{2n}(q)$ if $q$ is odd and ${\rm SO}_{2n}(q)\backslash {\rm SO}_{2n}(q)'$ if $q$ is even.

Assume $a_0\in A_0$ such that $x^{a_0g}=x$. Notice that $C:=C_{{\rm GL}_{2n}(q)}(x)$ is
a maximal torus of order $(q^n-1)^2$ and consists of elements having the form
$\left(
   \begin{array}{cc}
     M_{11} &  \\
      & M_{22} \\
   \end{array}
 \right)$,
 where $M_{11}\in T_{1,n}$, \mbox{$M_{22}\in T_{2,n}$}, and $T_{1,n}$ and $T_{2,n}$ are two Singer cycles of ${\rm GL}_n(q)$.
 Furthermore, we have $C\cap {\rm CO}_{2n}(q) \subseteq {\rm SO}_{2n}(q)$ since now $M_{22}=K_n M_{11}^{-1} K_n$.
 In addition, if $q$ is even
 then from the oddness of the order $|C|$, we know that $C\cap  {\rm SO}_{2n}(q) \subseteq  {\rm SO}_{2n}(q)'$.
 However, it follows that $a_0\in {\rm GO}_{2n}(q)\backslash {\rm SO}_{2n}(q)$
 if $q$ is odd and in $a_0g\in {\rm SO}_{2n}(q) \backslash {\rm SO}_{2n}(q)'$ if $q$ is even, contradicting the choice of  $a_0$.

We now assume that $\mu$ is a field automorphism or the product of a field automorphism and the graph automorphism of $S$.
Comparing the eigenvalues of $x^{a_0\mu}$ and $x$, we conclude that $x^{a_0\mu}\neq x$.
Thus $C_A(R)\leq A_0$, and we are done.
\end{proof}

\begin{lemma} \label{graph-centE6} Let $S=E_6(q)$, where $q=p^f$.
Let $r$ be a Zsigmondy prime of $\Phi_9$ as in Table \ref{data}, and
$R\in {\rm Syl}_r(S)$.
Then $C_A(R)\leq A_0$.
\end{lemma}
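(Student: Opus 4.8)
The plan is to mirror the arguments already used for $A_n(q)$ and $D_n(q)$ in Lemmas \ref{An(q)} and \ref{graph-centDn}, exploiting the very rigid structure of the group at $\Phi_9$. First I would recall that, since $r$ is a Zsigmondy prime of $\Phi_9(q)$ and $9$ is a regular number for $E_6(q)$ (see Table \ref{data}), $R$ is cyclic, is a Sylow $r$-subgroup of $A_0=\mathbf{G}^F$ with $\mathbf{G}$ of adjoint type $E_6$, and lies in a Sylow $\Phi_9$-torus $\mathbf{S}$ with $C_{\mathbf{G}}(\mathbf{S})=\mathbf{S}$ a maximal torus. By Lemma \ref{field-cent} we may discard field automorphisms, so it remains to handle $\alpha=g\mu$ where $g\in A_0$ and $\mu$ is the (order-$2$) graph automorphism of $S$, possibly composed with a field automorphism; as in Lemma \ref{An(q)}, it suffices to show that for any such $\alpha$ of the relevant type, $y^{\alpha}$ is not $A_0$-conjugate to a fixed generator $y$ of $R$.

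The key observation is that the graph automorphism $\mu$ of $E_6$ acts on $\mathbf{T}_e$ (the maximal torus containing $R$) by inverting its Singer-type structure: on the cyclic group $\mathbf{T}_e^F$ of order $\Phi_9(q)/(3,q-1)$, $\mu$ induces the map $t\mapsto t^{-q^{k}}$ for an appropriate $k$, composed with an element of the relative Weyl group $W(\mathbf{G},\mathbf{T}_e)=N_{\mathbf{G}}(\mathbf{T}_e)/\mathbf{T}_e$, which by regularity of $e=9$ is cyclic of order $9$ and acts on $\mathbf{T}_e^F$ by $t\mapsto t^{q^{j}}$. So the action of $\alpha$ on $R=\langle y\rangle$ is, up to an element of $N_{A_0}(R)$, the map $y\mapsto y^{-q^{a}}$ for some exponent $a$. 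Then $y^{\alpha}$ is $A_0$-conjugate to $y$ only if $-q^a\equiv q^b\pmod{r}$ for some $0\le b\le 8$, i.e. $q^{a-b}\equiv -1\pmod r$; since $r\mid\Phi_9(q)$ we have $\operatorname{ord}_r(q)=9$ (odd), so $-1$ is not a power of $q$ modulo $r$, a contradiction. I would run the same computation for the product of $\mu$ with a field automorphism, where $a$ changes to $a+i$ with $1\le i\le f-1$, and the same parity obstruction $\operatorname{ord}_r(q)$ odd finishes it. (Alternatively, one can phrase this eigenvalue-comparison argument directly on a $27$-dimensional module for $\mathbf{G}$ of type $E_6$, exactly as the eigenvalue comparison at the end of Lemma \ref{graph-centDn}: the eigenvalue multiset of $y^{\alpha}$ is obtained from that of $y$ by a Frobenius twist and inversion, and since no power of $q$ modulo $r$ equals $-1$ the two multisets differ.)

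The main obstacle I anticipate is making the claim ``$\mu$ acts on $\mathbf{T}_e^F$ by inversion up to $W$'' fully rigorous for the graph automorphism of adjoint $E_6$, including pinning down the precise exponent and checking that no coincidence among the $9$ Weyl-conjugates $\{q^{j}y : 0\le j\le 8\}$ and their $\mu$-images can occur; here one must be careful about the factor $(3,q-1)$ in $|\mathbf{T}_e^F|$ and about the small cases where $\Phi_9(q)$ might fail to have a Zsigmondy prime — but by Zsigmondy's theorem $\Phi_9(q)=q^6+q^3+1$ always has a Zsigmondy prime for $q>1$, so no exceptional case arises. The cleanest write-up is probably the eigenvalue-comparison version, since it reduces everything to the single arithmetic fact that $\operatorname{ord}_r(q)=9$ is odd, hence $-1\notin\langle q\rangle\le(\Z/r\Z)^{\times}$, and avoids delicate bookkeeping inside $N_{\mathbf{G}}(\mathbf{T}_e)$.
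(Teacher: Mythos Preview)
Your proposal takes a genuinely different route from the paper. The paper does \emph{not} analyse the action of the graph automorphism on the $\Phi_9$-torus directly. Instead it observes, via Weigel's classification \cite{We92} of maximal overgroups of $R$ in $\mathbf{G}^F$, that every such overgroup is $\mathbf{G}^F$-conjugate to a subgroup $M\cong \mathrm{SL}_3(q^3).3$. After arranging that $\alpha$ normalises $\overline{M}$ and its normal subgroup $\overline{M_1}\cong \mathrm{PSL}_3(q^3)$, the paper notes that $\alpha$ induces a non-trivial automorphism of $\overline{M_1}$ and then simply applies Lemma~\ref{An(q)} for $A_2(q^3)$ (the prime $r$ being a Zsigmondy prime of $\Phi_3(q^3)=\Phi_9(q)$) to force $C_{A_1}(R)$ into $N_{A_0}(\overline{M})\leq A_0$. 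In other words, the graph automorphism of $E_6$ is handled by restricting it to a type-$A$ subgroup, where its action is the transparent transpose-inverse already dealt with in Lemma~\ref{An(q)}.

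Your direct approach is in spirit correct, and the endgame arithmetic (since $\operatorname{ord}_r(q)=9$ is odd, $-1\notin\langle q\rangle\subset(\Z/r\Z)^\times$) is precisely what drives the $A_n$ argument the paper invokes. The obstacle you flag is real, however: establishing that the $E_6$ graph automorphism acts on $\mathbf{T}_e^F$ as $t\mapsto t^{-q^k}$ up to the order-$9$ relative Weyl group is exactly the work the paper sidesteps by passing to $\mathrm{SL}_3(q^3)$. Your 27-dimensional eigenvalue formulation is a good way to make it rigorous (using $V_{27}\circ\gamma\cong V_{27}^{*}$), but you would still need to pin down the eigenvalue multiset of $y$ on $V_{27}$ well enough to see it is not inversion-invariant; the paper's reduction achieves the same conclusion without that computation, at the cost of importing the external input \cite{We92}.
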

\begin{proof} Let $\alpha\in A\backslash A_0$ and $A_1=A_0\langle\alpha\rangle$. In order to prove the lemma, it suffices to show that
$C_{A_1}(R)\leq A_0$ for each $\alpha$ of prime order.

Let $(\mathbf{G},F)$ be the $F_\gamma$-set-up for $S$, and let $\overline{\mathbf{G}^F}=\mathbf{G}^F/Z(\mathbf{G}^F)$,
where $|Z(\mathbf{G}^F)|=(3,q-1)$.
By \cite[Fig. 6]{We92},  maximal subgroups  of $\mathbf{G}^F$ containing  a Sylow $r$-subgroup of $\mathbf{G}^F$ are all
isomorphic to ${\rm SL}_3(q^3).3$ and conjugate in $\mathbf{G}^F$.
Let $M$ be one of them such that $N_{A_0}(\overline{M})$ contains $C_{A_0}(R)$, and
let $M_1$ be the normal subgroup of $M$ such that $M_1\cong {\rm SL}_3(q^3)$.
Clearly, we have $N_{A_1}(\overline{M_1})\supseteq N_{A_1}(\overline{M})$ and $N_{A_1}(\overline{M})\gneq N_{A_0}(\overline{M})$. In particular,
after replacing $\alpha$ by the product of some $x\in A_0$ and a power of $\alpha$,
we may assume that $\alpha$ normalizes $\overline{M}$ and $\overline{M_1}$.
It follows that
\[N:=N_{A_1}(\overline{M})=N_{A_0}(\overline{M})\langle\alpha\rangle, {\rm\ and\ } C_{A_1}(R)\leq \langle\alpha, C_{A_0}(R)\rangle\leq \langle\alpha, N_{A_0}(\overline{M})\rangle=N.\]
Since $N_{A_1}({\overline{M_1}}^g)=N_{A_1}(\overline{M_1})^g$ for any $g\in S$ and $\alpha$ does not centralize $S$,
 we have that $\alpha$ cannot centralize $\overline{M_1}$. Hence $\alpha$ induces a non-trivial automorphism of $\overline{M_1}$.
Applying Lemma \ref{An(q)} on $R$ and $\overline{M_1}$, we have $C_{A_1}(R)=C_{N}(R)\leq N_{A_0}(\overline{M})$.
 Hence $C_{A_1}(R)\leq A_0$, finishing the proof.
 \end{proof}

\begin{lemma} \label{graph-centF4} Let $S=F_4(q)$, where $q=p^f$.
 Let $r=17$ if $q=2$ or else let $r$ be a Zsigmondy prime $r$ of $\Phi_{12}$ as in Table \ref{data}.
If $R$ is a Sylow $r$-subgroup of $S$, then $C_A(R)\leq A_0=S$.
\end{lemma}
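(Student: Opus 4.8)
Since the exceptional isogeny of $\mathrm F_4$ exists only in characteristic $2$, the group $F_4(q)$ has no graph automorphism when $p$ is odd; then $A=A_0\langle\phi\rangle$ for a field automorphism $\phi$ and the statement is immediate from Proposition~\ref{nograph}, with $r$ the Zsigmondy prime of $\Phi_{12}(q)=q^4-q^2+1$ (which exists, $(12,q)$ being no Zsigmondy exception). So assume $q=2^f$ and write $S=A_0=\mathbf G^F$ with $\mathbf G$ simple of type $\mathrm F_4$ over $\Ftwobar$ and $F$ a Frobenius endomorphism. For $q=2$ we have $A/S\cong C_2$ and, with $R\in{\rm Syl}_{17}(S)$, the claim is checked in \cite{GAP}; one takes $r=17=\Phi_8(2)$ rather than the Zsigmondy prime $13$ of $\Phi_{12}(2)$ precisely because the graph involution of $F_4(2)$ has centralizer the Ree group ${}^2F_4(2)\le F_4(2)$, which contains a Sylow $13$-subgroup of $F_4(2)$.

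Now let $q=2^f$ with $f>1$, so $\Out(S)=\langle\rho\rangle$ is cyclic of order $2f$, where $\rho$ comes from the isogeny $\Gamma$ of Definition~\ref{F-g setup} and $\rho^2$ is the field automorphism $x_\alpha(t)\mapsto x_\alpha(t^2)$. As $12$ is a degree of $W(\mathrm F_4)$ it is a regular number, so $\mathbf G$ has a Sylow $\Phi_{12}$-torus $\mathbf T$ that is a maximal torus; since $\Phi_{12}$ divides $|S|=|\mathbf G^F|$ exactly once and $r$ divides no other $\Phi_d(q)$, the Sylow $r$-subgroup $R$ is cyclic and lies in the cyclic group $\mathbf T^F$ of order $\Phi_{12}(q)$. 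Because $r\equiv1\pmod{12}$ a generator of $R$ is regular semisimple and $N_S(\mathbf T)/\mathbf T^F\cong C_{12}$ acts faithfully on $R$, whence $C_S(R)=\mathbf T^F$.

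The plan is to split $A$ along the index-$2$ subgroup $S\langle\rho^2\rangle$ of field-type automorphisms. By Lemma~\ref{field-cent} (with $\phi=\rho^2$), any element of $C_A(R)$ in $S\langle\rho^2\rangle$ already lies in $S$. So suppose $\alpha\in C_A(R)$ maps to an odd power $\rho^{i}$ of $\rho$ in $\Out(S)$; then $\alpha$ acts on the root datum through the nontrivial Coxeter-diagram symmetry, while $\alpha^2$ is an inner automorphism composed with the field automorphism $\rho^{2i}=F_{2^i}$, which has order $f/\gcd(i,f)$ on $S$. As $\alpha^2\in C_A(R)$, Lemma~\ref{field-cent} forces $\alpha^2\in S$, hence $f\mid i$. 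When $f$ is even this is impossible for odd $i$, and $C_A(R)\le S$ follows. When $f$ is odd it forces $\alpha=g\gamma$ with $g\in S$ and $\gamma:=\rho^{f}$ the graph involution, and here $C_S(\gamma)=\mathbf G^F\cap\mathbf G^{\rho^{f}}=\mathbf G^{\rho^{f}}={}^2F_4(q)$. This is the delicate case: since $q^6+1=\Phi_4(q)\Phi_{12}(q)$ divides $|{}^2F_4(q)|$, a Zsigmondy prime of $\Phi_{12}(q)$ divides $|{}^2F_4(q)|$ to the full power $(\Phi_{12}(q))_r$, so — exactly as at $q=2$ — for $f$ odd the prime must be taken to be a Zsigmondy prime of $\Phi_8(q)=q^4+1$ instead ($8$ is likewise a regular number of $\mathrm F_4$, $\Phi_8$ occurs once in $|S|$, and $\Phi_8\nmid|{}^2F_4(q)|$); then the $\Phi_8$-Coxeter torus replaces $\mathbf T$ above.

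To finish, a graph-type $\alpha\in C_A(R)$ normalizes $C_S(R)=\mathbf T^F$, hence $\mathbf T$; since $R\le C_S(\alpha)$ and $C_S(\alpha)$ is a form of (a subgroup of) the graph-involution centralizer, whose order is prime to $r$ by the choice of prime, this is a contradiction. Alternatively, after adjusting $g$ so that the graph-type part normalizes $\mathbf T$, one runs the order count of Lemma~\ref{field-cent}: the graph-type automorphism acts on the cocharacter lattice of $\mathbf T$ as the diagram flip composed with a $2$-power twist, and comparing the order of its action on the cyclic group $R$ with ${\rm ord}_r(p)$ (Table~\ref{data}) via \cite[Lemma 25.13]{MT} is contradictory. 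I expect the main obstacle to be exactly this graph-involution/${}^2F_4(q)$ interaction when $f$ is odd — choosing the regular number so that the associated Coxeter torus avoids the Ree subgroup — the field-automorphism bookkeeping being routine once Lemma~\ref{field-cent} is available.
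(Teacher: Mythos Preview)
Your reductions for $p$ odd, for $q=2$, and for even $f$ agree with the paper's (Lemma~\ref{field-cent} handles the field automorphisms, \cite{GAP} handles $q=2$). In the remaining case $q=2^f$ with $f>1$ odd you have in fact uncovered an error --- not in your reasoning, but in the \emph{statement} of the lemma. As you note, $\Phi_{12}(q)$ divides $|{}^2F_4(q)|$ to the same power as it divides $|F_4(q)|$, and $C_S(\delta^f)={}^2F_4(q)$; hence a Sylow $r$-subgroup of ${}^2F_4(q)$ is already Sylow in $S$ and is centralized by the outer involution $\delta^f\in A\setminus S$. So $C_A(R)\not\le S$, and Lemma~\ref{graph-centF4} is false as written for odd $f>1$. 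The paper's proof lists, via \cite[Fig.~5]{We92}, only two $S$-classes of maximal overgroups of $R$ of shape ${}^3D_4(q).3$, overlooking that ${}^2F_4(q)$ is also a maximal subgroup of $F_4(q)$ containing $R$ whenever $f$ is odd; its concluding dichotomy (``either $\delta^f$ normalizes some ${}^3D_4(q).3$, or else $C_{S\langle\delta^f\rangle}(R)\le S$'') is therefore a non sequitur. Your reading of the exception $r=17$ at $q=2$ as being chosen precisely to avoid ${}^2F_4(2)$ is exactly right, and the same manoeuvre is needed for every odd $f$.

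Your proposed repair --- take $r$ to be a Zsigmondy prime of $\Phi_8(q)$ when $f$ is odd --- is the natural one: $8$ is regular for $F_4$, $\Phi_8$ occurs to the first power in $|F_4(q)|$, and $\Phi_8\nmid|{}^2F_4(q)|$. What is still incomplete is your last step. You must rule out \emph{every} $\alpha\in C_A(R)\cap S\gamma$, not just $\alpha=\gamma$, and the bare assertion that $C_S(\alpha)$ is ``a form of the graph-involution centralizer'' needs proof. A clean way to close this: you have shown $C_A(R)\cap S\langle\rho^2\rangle=C_S(R)=\mathbf T^F$, which has odd order $\Phi_8(q)$, so $[C_A(R):C_S(R)]\le 2$ and a hypothetical $\alpha\in C_A(R)\setminus S$ may be taken to be an \emph{involution} in the coset $S\gamma$; it then suffices that all involutions in $S\gamma$ are $S$-conjugate to $\gamma$ (hence have centralizer ${}^2F_4(q)$, of order prime to $r$), which is part of the known classification of outer involutions in $F_4(2^f).2$. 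Your alternative order-count on the $\Phi_8$-torus is plausible but not yet fleshed out.
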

\begin{proof}
The result directly follows from Lemma \ref{field-cent} if $p$ is odd, in which case $S$ has no graph automorphisms.
So we may assume that $p=2$ in the following. Then ${\rm Aut}(S)$
is the semi-direct product of $S$ and $\langle \delta\rangle$, where $\delta$ is the automorphism of $S$
induced by the graph automorphism of the corresponding Dynkin diagram, and squares to
the field automorphism $x\mapsto x^2$. Since the case where $S=F_4(2)$ can be directly checked by
the character table of ${\rm Aut}(F_4(2))$ in \cite{GAP},
we assume $q>2$ in the following.

In order to show that $C_A(R)\leq S$, it is equivalent to show that $C_{S\langle \delta^i\rangle}(R)\leq S$
for any $i\in \mathbb{Z}$. Also, it is equivalent to show that $C_{S\langle \delta^i\rangle}(R)\leq S$
for any $\delta^i$ of prime order. If $f$ is even, then all the automorphisms $\delta^i$ of prime order are field automorphisms
of $S$. Hence the result follows by applying Lemma \ref{field-cent}.

Now we suppose that $f$ is odd. It is easy to see that all the automorphisms $\delta^i$ of odd prime order are field automorphisms
of $S$. Hence the result also follows by applying Lemma \ref{field-cent}.
So it remains to show that $C_{S\langle \delta^f\rangle}(R)\leq S$. In this case, we have $|S\langle \delta^f\rangle:S|=2$.
Observe that $C_{S\langle \delta^f\rangle}(R)$ is contained in some proper maximal subgroup of $S\langle \delta^f\rangle$.

By \cite[Fig. 5]{We92}, there are two $S$-conjugacy classes of maximal subgroups  containing $R$.
Moreover, all of them are isomorphic to ${}^3D_4(q).3$. If $\delta^f$ normalizes such a maximal subgroup $M$,
then all those maximal subgroups are conjugate in $S\langle \delta^f\rangle$. Hence
we may prove the lemma by applying Lemma \ref{field-cent} and a similar argument as in the proof of Lemma \ref{graph-centE6}.
We now assume that $\delta^f$ normalizes none of those maximal subgroups. Then all of them are only in the maximal subgroup
$S$ of $S\langle \delta^f\rangle$.
Thus it follows that $C_{S\langle \delta^f\rangle}(R)\leq S$, finishing the proof.
\end{proof}


\subsection{Simple groups with a graph automorphism, II}

In this subsection we investigate the centralizer of a Sylow subgroup of a simple group $S$
in the subgroup $\widehat{A}_0$ of the automorphism group of $S$ generated by $S$ and all of its field automorphisms, where $S$
is one of the groups $B_2(q)$ $(q=2^f)$ or $G_2(q)$ $(q=3^f)$.

\begin{lemma} \label{graph-centB2} Let $S=B_2(q)\cong O_{5}(q)$, where $q=2^f>2$.
Let $r$ be a Zsigmondy prime of $2^{2f}-1$ with respect to $(2,2f)$ unless $f=3$,
in which case let $r=q-1=7$. If $R\in {\rm Syl}_r(S)$, then $C_{\widehat{A}_0}(R)\leq S$.
\end{lemma}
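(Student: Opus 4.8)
The plan is to adapt the method of Lemma~\ref{field-cent}, but since $B_2(2^f)$ with $q$ even does not occur in Table~\ref{data} we must supply the needed data directly and, crucially, sharpen the final numerical step. Realise $S=B_2(q)=\mathrm{Sp}_4(q)$ as $\mathbf{G}^F$, where $\mathbf{G}$ is a simple algebraic group of adjoint type $B_2$ over $\overline{\mathbb{F}}_2$ and $F=F_0^f$ with $F_0$ the standard Frobenius $x_\alpha(t)\mapsto x_\alpha(t^2)$. Since the exceptional graph automorphism of $B_2(2^f)$ squares to the field automorphism, $\widehat{A}_0=\langle S,F_0\rangle$, and $F_0$ induces on $S$ a field automorphism $\sigma$ of order $f$. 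Fix an $F_0$-stable maximal torus $\mathbf{T}$ in an $F_0$-stable Borel subgroup; let $W=W(B_2)$ and let $Y$ be the cocharacter lattice of $\mathbf{T}$. First I would record the arithmetic: in both cases of the statement $r\ge 5$, $e:=e_r(q)\in\{1,2\}$ (namely $e=2$ if $f\ne 3$, and $e=1$, $r=7$ if $f=3$), $e$ is a regular number of $B_2$, and $\mathrm{ord}_r(2)=ef$. Hence the Sylow $e$-split Levi subgroup $\mathbf{L}=C_{\mathbf{G}}(\mathbf{S})$ equals a maximal torus $\mathbf{S}$ with $\mathbf{S}^F\cong(\mathbb{Z}/(q-\epsilon))^2$, where $\epsilon=1$ if $e=1$ and $\epsilon=-1$ if $e=2$. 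Up to conjugacy $R$ is the Sylow $r$-subgroup of $\mathbf{S}^F$; choosing $h\in\mathbf{G}$ with $\mathbf{S}=\mathbf{T}^{h^{-1}}$ $F$-stable, one has $R^h=\mathbf{T}[r^a]$ with $r^a=(q-\epsilon)_r$, which I identify with $Y/r^aY$ as a $W$-module.

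The next step is the centralizer. Because $r^a\ge 5$, the subgroup $\mathbf{T}[r^a]$ contains an element on which no root of $B_2$ vanishes, hence a regular semisimple element of $\mathbf{G}$; therefore $C_{\mathbf{G}}(\mathbf{T}[r^a])=\mathbf{T}$, so $C_{\mathbf{G}}(R)=\mathbf{S}$ and $C_S(R)=\mathbf{S}^F\le S$. It remains to rule out elements of $C_{\widehat{A}_0}(R)$ outside $S$. Suppose $\sigma^j g^{-1}\in C_{\widehat{A}_0}(R)$ with $g\in S$ and $1\le j\le f-1$, so that $\sigma^j(x)=x^g$ for all $x\in R$. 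Writing $x=hyh^{-1}$ with $y\in\mathbf{T}$ and using $\sigma^j(y)=y^{2^j}$, this becomes ${}^n y=y^{2^j}$ for all $y\in R^h=\mathbf{T}[r^a]$, where $n:=\sigma^j(h)^{-1}g^{-1}h\in\mathbf{G}$. As $\gcd(2^j,r)=1$, the map $y\mapsto y^{2^j}$ is an automorphism of $\mathbf{T}[r^a]$, so $n$ normalises $\mathbf{T}[r^a]$ and hence normalises $C_{\mathbf{G}}(\mathbf{T}[r^a])=\mathbf{T}$; thus $n\in N_{\mathbf{G}}(\mathbf{T})$ and $w_1:=n\mathbf{T}\in W$ acts on $\mathbf{T}[r^a]=Y/r^aY$ as multiplication by $2^j$.

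Finally, reducing modulo $r$, the image of $w_1$ in $\mathrm{GL}(Y/rY)\cong\mathrm{GL}_2(\mathbb{F}_r)$ is the scalar matrix $2^jI$; since $r\ge 5$ the reduction $W\hookrightarrow\mathrm{GL}_2(\mathbb{F}_r)$ is injective and, its image being dihedral of order $8$, the only scalars in it are $\pm I$. Hence $2^j\equiv\pm1\pmod r$, so $\mathrm{ord}_r(2)=ef$ divides $2j$; but $1\le j\le f-1$, which forces $f\mid j$ when $e=2$ and $3\mid j$ when $e=1$ (with $f=3$), both impossible. This contradiction yields $j=0$, i.e.\ $C_{\widehat{A}_0}(R)\le S$. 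The main obstacle is precisely this last paragraph: unlike in Lemma~\ref{field-cent}, here $R$ is noncyclic and $|N_S(R)/C_S(R)|=|W(B_2)|=8$ is too large for the crude bound on the order of the induced automorphism to close the argument, so one must exploit that the induced automorphism is a \emph{scalar} together with the absence of nontrivial scalars in the mod-$r$ reduction of $W(B_2)$; a secondary technical point, which is exactly what $r\ge 5$ secures, is the identity $C_{\mathbf{G}}(\mathbf{T}[r^a])=\mathbf{T}$.
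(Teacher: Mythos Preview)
Your argument is correct and takes a genuinely different route from the paper's. The paper works concretely inside $O_5(q^2)$ (resp.\ $O_5(q)$ for $f=3$): it conjugates $R$ to an explicit diagonal subgroup $R_1$, selects the particular element $u_1=\mathrm{diag}(\lambda,1,1,1,\lambda^{-1})$, and shows that $u_1^{\phi}$ and $u_1$ have distinct eigenvalue multisets, so cannot be conjugate. Your proof instead stays at the level of the algebraic group and the Weyl group: you identify $R^h$ with the full $r^a$-torsion $\mathbf{T}[r^a]\cong Y/r^aY$, deduce from $C_{\mathbf{G}}(\mathbf{T}[r^a])=\mathbf{T}$ that the conjugating element lands in $N_{\mathbf{G}}(\mathbf{T})$, and then observe that the resulting $w_1\in W(B_2)$ acts on $Y/rY$ as the scalar $2^j$; since the only scalars in the faithful image of the dihedral group $W(B_2)$ in $\mathrm{GL}_2(\mathbb{F}_r)$ are $\pm I$, you obtain $2^j\equiv\pm1\pmod r$, contradicting $\mathrm{ord}_r(2)=ef$ with $1\le j\le f-1$. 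Both proofs therefore bottom out in the same arithmetic obstruction $2^j\not\equiv\pm1\pmod r$, but yours reaches it structurally rather than by eigenvalue inspection of a single witness. Your approach has the advantage of generalising cleanly (the same ``only $\pm I$ are scalar in the reflection representation mod $r$'' argument applies to any rank-$2$ Weyl group once $r$ avoids $|W|$), and it makes transparent why the noncyclic Sylow forces one beyond the crude order bound of Lemma~\ref{field-cent}; the paper's approach is more elementary in that it avoids any discussion of regularity or of $W$ acting on $Y/rY$, at the cost of an ad hoc choice of $u_1$. One minor remark: your justification of $C_{\mathbf{G}}(\mathbf{T}[r^a])=\mathbf{T}$ via ``$r^a\ge 5$ so some element avoids all root kernels'' is correct, but the relevant inequality is really that $r\ge 5$ exceeds the number of reflecting hyperplanes in $B_2$, so that the four kernels cannot cover $(\mathbb{Z}/r)^2$.
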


\begin{proof} We may assume $f\neq 1$. In order to prove the lemma, it suffices to show that
$C_{S\langle\phi\rangle}(R)\leq S$ for any field automorphism $\phi$ of $S$ of prime order.

We may first suppose that $f\neq 3$.
Since the Weyl group of $S$ is a $2$-group and $r$ is clearly odd, it follows from \cite[Proposition 5.2]{BMM93} that
 $R$ is $O_5(q^2)$-conjugate to
 \[R_1=\{{\rm diag}(\lambda,\mu, 1,\mu^{-1},\lambda^{-1})\mid \lambda,\mu\in \mathbb{F}_{q^2}^* {\rm\ and\ have\ order\ }(q+1)_{r}\}.\]
The field automorphism $\phi$ extends to $O_5(q^2)$,
and we may assume that $\phi_{R_1}:R_1\rightarrow R_1$ via $t\mapsto t^{p^k}$ for some $1\leq k<f$.
In order to prove the lemma, it suffices to show that for any $g\in O_5(q^2)$, there is some
$t\in R_1$ such that $t^{\phi g^{-1}}\neq t$, namely, $t^\phi\neq t^g$.

Let $u_1={\rm diag}(\lambda,1, 1,1,\lambda^{-1})\in R_1$ be of order $(q+1)_{r}$. Clearly, $\lambda^{p^k}\neq 1, \lambda, \lambda^{-1}$.
So $u_1^\phi$ and $u_1$ have different eigenvalues, which means that $u_1^\phi$ and $u_1$ are  not conjugate
in $O_5(q^2)$, as wanted. 

We now let $f=3$ and so $r=7$. In this case,
the Sylow $7$-subgroup $R$  is $O_5(q)$-conjugate to
 \[R_1=\{{\rm diag}(\lambda,\mu, 1,\mu^{-1},\lambda^{-1})\mid \lambda,\mu\in \mathbb{F}_{q}^* {\rm\ and\ have\ order\ }(q-1)_{7}=7\}.\]
  Similarly, we may assume that $\phi_{R_1}:R_1\rightarrow R_1$ via $t\mapsto t^{2}$ and take
 $u_1'={\rm diag}(\lambda',1, 1,1,\lambda'^{-1})\in R_1$ with $\lambda'\neq 1$. Checking  eigenvalues of  $u_1'^\phi$ and $u_1'$ again, we conclude that
 they are not conjugate in $O_5(q)$, and we are done.
\end{proof}

\begin{lemma} \label{graph-centG2} Let $S=G_2(q)$, where $q=3^f$ and $f>1$.
Assume that $\phi$ is a field automorphism of $S$
of prime order.
If $R$ is a Sylow $r$-subgroup of $S$ for some Zsigmondy prime divisor $r$ of $\Phi_{2}$, then
$C_{\widehat{A}_0}(R)\leq S$.
\end{lemma}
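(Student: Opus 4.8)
The plan is to mimic closely the structure of the proof of Lemma~\ref{graph-centB2}, since the setup is the direct analogue for $G_2(3^f)$: we want to show that no product of a nontrivial field automorphism $\phi$ of prime order with an inner (or diagonal -- but here $G_2$ has trivial outer diagonal part) element centralizes the cyclic Sylow $r$-subgroup $R$, where $r$ is a Zsigmondy prime of $\Phi_2(q)=q+1$, hence $r \mid 3^f+1$ and $r$ has multiplicative order $2f$ modulo... (more precisely, the relevant torus is $T_e$ with $e=6$ in Table~\ref{data}, so $R \le T_6$ of order $\Phi_6(q)=q^2-q+1$; one has to be slightly careful which cyclotomic factor a Zsigmondy prime of $q+1$ lands in, but in any case $R$ is cyclic and lies in a well-understood maximal torus). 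First I would fix $\fG$ of type $G_2$ over $\Fthreebar$ with $A_0 = \fG^F = S$ (no diagonal automorphisms for $G_2$), and record that $\widehat{A}_0 = S\langle\phi\rangle$ where $\phi$ generates the cyclic group of field automorphisms. As in the proof of Lemma~\ref{graph-centB2}, it suffices to treat a single $\phi$ of prime order, and to show that for every $g \in S$ there is some $t \in R$ with $t^\phi \ne t^g$.

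Next I would locate $R$ inside a concrete maximal torus. Since the Weyl group of $G_2$ has order $12 = 2^2\cdot 3$ and $r$ is coprime to $6$ (a Zsigmondy prime of $3^f+1$ for $f>1$ is at least $7$), the argument of Lemma~\ref{graph-centB2} via \cite[Proposition~5.2]{BMM93} applies: $R$ is contained, up to $\fG^F$-conjugacy, in the Sylow $\Phi_6$-torus $T_6$, and one can realize elements of $T_6$ explicitly -- e.g. using the embedding $G_2(q) \hookrightarrow \mathrm{GL}_7(q)$ (or $\mathrm{GL}_6$), so that a generator of $R$ is conjugate in the ambient group to a diagonal matrix whose nonzero eigenvalues are a Galois orbit $\{\lambda, \lambda^q, \lambda^{q^2}, \dots\}$ of a primitive $(q^2-q+1)_r$-th root of unity (together with possibly $1$'s). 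The field automorphism $\phi$ of order $p^k$ acts on this torus by the power map $t \mapsto t^{p^k}$ with $1 \le k < f$. Then, exactly as in Lemma~\ref{graph-centB2}, I would exhibit a specific element $t \in R$ -- for instance one whose eigenvalue multiset is not stable under raising to the $p^k$-th power -- so that $t^\phi$ and $t$ have different eigenvalue multisets and hence cannot be conjugate in the ambient linear group, a fortiori not conjugate by any $g \in S$. The key numerical point to verify is $\lambda^{p^k} \notin \{\lambda^{q^i}\}$, which follows because $\lambda^{p^k \pm q^i} = 1$ would force $(q^2-q+1)_r \mid p^{jk}\pm 1$ for some small $j$, contradicting $\mathrm{ord}_r(p) = 6f$ (recorded in Table~\ref{data}) via \cite[Lemma~25.13]{MT}, just as in the proofs of Lemmas~\ref{field-cent} and \ref{An(q)}.

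The main obstacle I anticipate is purely bookkeeping: pinning down the precise structure of the maximal torus containing $R$ and a clean matrix model for $G_2(q)$ in which one can read off eigenvalues, together with handling the ambiguity of whether a Zsigmondy prime of $q+1 = 3^f+1$ divides $\Phi_2(q)$ itself (only when $f=1$, excluded) or $\Phi_6(q)$ or $\Phi_{2f}$-type factors -- the point being that for $f>1$ such an $r$ is a Zsigmondy prime of $q^6-1$ dividing $\Phi_6(q)$ unless $2f$ has the right divisibility, so one must check $R \le T_6$ is the correct placement and that $|N_{A_0}(R)/C_{A_0}(R)| = 6$ as in Table~\ref{data}. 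Once the torus and the eigenvalue description are in hand, the eigenvalue-comparison argument is routine and identical in spirit to Lemma~\ref{graph-centB2}; no new idea beyond what is already deployed for $B_2$ is needed, and small cases (if any arise, e.g.\ a sporadic coincidence for small $f$) can be dispatched directly with \cite{GAP}.
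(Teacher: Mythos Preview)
Your overall strategy---reduce to showing that for every $g\in S$ and every field automorphism $\phi$ of prime order there exists $t\in R$ with $t^\phi\ne t^g$, then realise $R$ concretely inside a maximal torus via a matrix embedding and compare eigenvalue multisets---is exactly the paper's approach, and it works. But your bookkeeping is off in a way that, as written, would derail the argument.

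The prime $r$ is a Zsigmondy prime of $\Phi_2(q)=q+1=3^f+1$, so $r\mid q+1$ and $\mathrm{ord}_r(p)=2f$, \emph{not} $6f$. In particular $q\equiv -1\pmod r$ gives $\Phi_6(q)=q^2-q+1\equiv 3\pmod r$, so $r\nmid \Phi_6(q)$ and $R$ does \emph{not} sit in the Sylow $6$-torus $T_6$; the row of Table~\ref{data} you are quoting (with $e=6$, $\mathrm{ord}_r(p)=6f$) is for $G_2(q)$ with $3\nmid q$ and does not apply here. The correct home for $R$ is the Sylow $2$-torus $\mathbf{T}_2$ of $\mathbf{G}$, with $|\mathbf{T}_2^F|=(q+1)^2$; this is a rank-$2$ torus, and the paper exploits that by choosing $t\in R$ whose image has eigenvalue multiset $\{1,1,1,1,\alpha,\alpha,\alpha^{-1},\alpha^{-1}\}$ (with $\alpha$ of order $(q+1)_r$), so the only possible collision is $\alpha^{p^k}=\alpha^{\pm 1}$, ruled out by $\mathrm{ord}_r(p)=2f>k$ and $1\le k<f$. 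Your proposed eigenvalue pattern $\{\lambda,\lambda^q,\lambda^{q^2},\dots\}$ with $\lambda$ of order $(q^2-q+1)_r$ collapses since $(q^2-q+1)_r=1$.

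A secondary difference: the paper embeds $G_2(q)$ into $H=P\Omega_8^+(q^2)$ rather than $\mathrm{GL}_7$ or $\mathrm{GL}_6$, because over $\mathbb{F}_{q^2}$ the $(q+1)$-torus becomes split and the image of $\mathbf{T}_2^F$ is conjugate to an explicit diagonal subgroup $T_D$. Your $\mathrm{GL}_7$ embedding could be made to work, but you would still need to pass to $\mathbb{F}_{q^2}$ to diagonalise. Once you correct the torus to $\mathbf{T}_2$ and the order of $p$ mod $r$ to $2f$, the remainder of your sketch (Weyl group order $12$ coprime to $r\ge 5$, hence $R\le \mathbf{T}_2^F$ up to conjugacy; eigenvalue comparison) lines up with the paper's proof.
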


\begin{proof} We may assume $f\neq 1$. As in Lemma \ref{graph-centB2}, it suffices to show that
$C_{S\langle\phi\rangle}(R)\leq S$ for any field automorphism $\phi$ of $S$ of prime order.

 Let $(\mathbf{G},F)$ be the $F_\gamma$-set-up for $S$ and
  $\mathbf{T}_2$  a Sylow $2$-torus of $\mathbf{G}$.
Since $\phi$ is of prime order, we have $f>1$. So we may assume that $\phi$ sends $t\in \mathbf{T}_2$ to $t^{p^k}$ for
some integer $1\leq k<f$. In addition, since the order of the Weyl group of $S$ is $12$ and $r\geq 5$, we may assume that
$R$ is a Sylow $r$-subgroup of $\mathbf{T}_2^F$ by \cite[Theorem 25.14 (1)]{MT}.

In order to show the lemma, it suffices to show that
for any $g\in S$, there is some $t\in R$ such that $t^{\phi g^{-1}}\neq t$, namely, $t^\phi\neq t^g$.
We now denote by $\iota$ an embedding of $G_2(q)$ into $H=P\Omega_8^+(q^2)$. Notice that
$H$ has a field automorphism, also denoted by $\phi$,
mapping $(m_{ij})_{8\times 8}\in H$ to $(m_{ij}^{p^k})_{8\times8}$.
Hence it is enough to show that for any $g\in H$, there is some $t\in R$ such that $\iota(t)^\phi\neq \iota(t)^g$.

Clearly, all eigenvalues of elements of $\mathbf{T}_2^F$ are in $\mathbb{F}_{q^2}^*$.
There is $h\in H$ such that
 \[\iota(\mathbf{T}^F)^h=
T_D=\{{\rm diag}(\lambda,\mu,\lambda\mu^{-1},1,1,\lambda^{-1}\mu, \mu^{-1}, \lambda^{-1})\mid \lambda,\mu\in \mathbb{F}_{q^2}^*
{\rm\ and\ have\ order\ }q+1\}.\]
Let $t\in R$  be such that
\[u=\iota(t)^h={\rm diag}(1,\alpha,\alpha^{-1},1,1,\alpha, \alpha^{-1},1)\in T_D,\]
where $\alpha=\zeta^m$, $m=(q^2-1)_{r'}$ and $\zeta$ is a generator of $\mathbb{F}_{q^2}^*$.
In particular, $\phi(u)$ is $H$-conjugate to $\phi(\iota(t))$.

If $\phi(u)$ is $H$-conjugate to $u$, then $\phi(u)$ and $u$ have the same eigenvalues.
This implies $\alpha^{p^k}=\alpha^{-1}$, which is not possible because of the choice of $r$.
Hence $\phi(u)$ and $u$ are not conjugate in $H$, and we are done.
\end{proof}

\begin{corollary} \label{B2G2}
Theorem \ref{cent-all} (ii) holds.
\end{corollary}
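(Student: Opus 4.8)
The plan is to read Theorem~\ref{cent-all}(ii) off from Lemmas~\ref{graph-centB2} and~\ref{graph-centG2} together with one small direct check. Part~(ii) concerns exactly the situations in~$(3)$ and~$(4)$: $S=A_2(4)$ with $r=7$; $S=B_2(8)$ with $r=7$; $S=B_2(2^f)$ with $2^f\neq 2,8$ and $r$ a Zsigmondy prime of $2^f+1$ with respect to $(2,2f)$; and $S=G_2(3^f)$ with $f>1$ and $r$ a Zsigmondy prime of $3^f+1$. Observe first that for $q=8$ the prime picked out in Lemma~\ref{graph-centB2} is precisely $r=q-1=7$, so the three $B_2$-cases are all instances of that lemma.

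For $S=B_2(2^f)$ with $2^f\neq 2$ and for $S=G_2(3^f)$ with $f>1$, Lemmas~\ref{graph-centB2} and~\ref{graph-centG2} give the sharper statement $C_{\widehat{A}_0}(R)\leq S$, where $\widehat{A}_0$ is generated by $S$ and all field automorphisms. To pass from $\widehat{A}_0$ to $A=\Aut(S)$ I would use the structure of $\Aut(S)$ in these two families: the defining characteristic is $2$, respectively $3$, so $S$ is simultaneously simply connected and of adjoint type, hence has no diagonal automorphism and $A_0=S$; moreover $\Out(S)$ is cyclic, generated by the exceptional automorphism $\tau$ coming from the special isogeny, with $\tau^2$ the Frobenius field automorphism. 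Thus $\tau$ together with the field automorphisms generates $\Out(S)$, so $\widetilde{A}_0=\langle A_0,\tau\rangle=A$, and therefore $C_A(R)\leq A=\widetilde{A}_0$. (The bound $C_{\widehat{A}_0}(R)\leq S$ supplied by the two lemmas is stronger, and is precisely what is needed in Section~\ref{blockgalmostsimple}.)

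It remains to treat $S=A_2(4)\cong{\rm PSL}_3(4)$ with $r=7$, which is excluded from the hypotheses of Lemma~\ref{An(q)} and so must be handled separately. Here $R$ is cyclic of order $7$ with $C_S(R)=R$, $|N_{A_0}(R)/C_{A_0}(R)|=3$ by Table~\ref{data}, and $\Out(S)$ is small. I would write $R=\langle\overline{x}\rangle$ with $\overline{x}$ the image of a regular semisimple element of ${\rm SL}_3(4)$ of order $7$, compute the eigenvalues of $\overline{x}$ and of its images under the diagonal, field and graph automorphisms, and thereby pin down which cosets of $\widetilde{A}_0$ in $A$ can meet $C_A(R)$; with the order data above this determines $C_A(R)$. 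As ${\rm PSL}_3(4)$ and $\Aut({\rm PSL}_3(4))$ are small, it is simplest to confirm the resulting inclusion $C_A(R)\leq\widetilde{A}_0$ directly in \cite{GAP}.

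The step requiring the most care is the automorphism-group bookkeeping for the families $B_2(2^f)$ and $G_2(3^f)$: since $\Out(S)$ there is cyclic of order $2f$, the usual clean split of outer automorphisms into ``field'' and ``graph'' parts is not available, and one must keep precise track of which automorphisms lie in $\widehat{A}_0$, in $\widetilde{A}_0$ and in $A$; the one point outside the reach of the general lemmas is the small group $A_2(4)$. Once these are settled, the corollary follows formally.
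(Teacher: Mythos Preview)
Your proposal is correct and follows the paper's approach: the paper's proof is simply ``The case where $S=A_2(4)$ can be directly checked using \cite{GAP}. For the other cases, the result follows by Lemmas~\ref{graph-centB2} and~\ref{graph-centG2}.'' Your additional observation that $\widetilde{A}_0=A$ for the $B_2(2^f)$ and $G_2(3^f)$ families---so that $C_A(R)\le\widetilde{A}_0$ is automatic there, with Lemmas~\ref{graph-centB2} and~\ref{graph-centG2} really supplying the sharper bound used in Section~\ref{blockgalmostsimple}---is correct and slightly more precise than the paper's terse citation.
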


\begin{proof} The case where $S=A_2(4)$ can be directly checked using \cite{GAP}. For the other cases,
the result follows by Lemmas \ref{graph-centB2} and \ref{graph-centG2}.
\end{proof}

\section{Block graphs of almost simple groups} \label{blockgalmostsimple}

In this section we show that block graphs of almost simple groups have a triangle
two of whose vertices can be determined.

We keep the notation in Section \ref{centralizers}, including $A_0$ and $\widetilde{A}_0$.
Let $\pi$ be a set of primes, and for a finite group $G$, let $\pi(G)$ be the set of prime divisors of $|G|$. We write $\Gamma_B(G)|_\pi$
for the full subgraph of $\Gamma_B(G)$ whose vertices are those of  $\pi(G)\cap \pi$.
The main purpose of this section is  to prove the following result.

\begin{theorem} \label{Almostsimplewithtriangle}
Let $S\leq G\leq A$, where $A={\rm Aut}(S)$ and  $S$ is a nonabelian simple group.
 Then for each prime divisor $\ell$ of $|S|$, the subgraph $\Gamma_B(G)|_{\pi(S)}$ has a triangle containing $\ell$.
\end{theorem}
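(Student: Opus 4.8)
The plan is to prove, for each prime $\ell\mid|S|$, that $\Gamma_B(G)|_{\pi(S)}$ contains two vertices $r_1$ and $r_2$ each of which is joined to \emph{every} other vertex of $\Gamma_B(G)|_{\pi(S)}$. Since $|\pi(S)|\ge 3$ by Burnside's $p^aq^b$-theorem, this immediately produces a triangle through $\ell$: take $\{\ell,r_1,r_2\}$ when $\ell\notin\{r_1,r_2\}$, and $\{r_1,r_2,t\}$ for any third prime $t\in\pi(S)$ otherwise. First I would dispose of the cases where $S$ is alternating, sporadic, or the Tits group: there $|{\rm Out}(S)|\le 2$ except for $S={\rm Alt}(6)$, and Propositions \ref{AltCase} and \ref{SporCase} already determine $\Gamma_B$ for both $S$ and ${\rm Aut}(S)$ (and for ${\rm Sym}(n)\ge G\ge{\rm Alt}(n)$), while ${\rm Alt}(6)$ is handled with \cite{GAP}; in each instance the graph on $\pi(S)$ has at least three vertices and misses at most one edge, hence contains a triangle through every prime. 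So from now on $S$ is of Lie type over $\mathbb{F}_q$, $q=p^f$, and $S<G\le A:={\rm Aut}(S)$.

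For the first of the two universal primes I would always take the defining prime $p$. Let $P\in{\rm Syl}_p(S)$, chosen as the group of $\mathbb{F}_q$-points of the unipotent radical $\mathbf{U}$ of an $F$-stable Borel subgroup. No nontrivial automorphism of $S$ centralizes $P$: a nontrivial field or graph-field automorphism fixes only a proper subgroup of $P$; a graph automorphism moves at least one simple root subgroup; and a diagonal automorphism centralizing $P$ would be induced by conjugation by a torus element of $\bigcap_\alpha\ker\alpha=Z(\mathbf{G})=1$ since $\mathbf{G}$ is adjoint. Hence $C_A(P)\le S$, so $C_G(P)\le S$, and by Lemma \ref{NormSylCentUniLiftPrin} the block $B_0(S)_p$ is covered by the unique $p$-block $B_0(G)_p$ of $G$. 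Since $\Gamma_B(S)$ is complete by Theorem \ref{graphofsimple}, Lemma \ref{PrinUniqCovLiftEdge} then shows that $p$ is adjacent in $\Gamma_B(G)$ to every other prime of $\pi(S)$. (The finitely many groups over $\mathbb{F}_2$ or $\mathbb{F}_3$ for which $P$ fails to be Zariski-dense in $\mathbf{U}$ are settled with \cite{GAP}.)

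For the second universal prime I would invoke Theorem \ref{cent-all}: let $r$ and $R\in{\rm Syl}_r(S)$ be as provided there, so that $C_A(R)\le A_0$, with $C_A(R)\le\widetilde{A}_0$ only in the few cases of part $(ii)$; note $r\ne p$. Put $M:=G\cap A_0\unlhd G$ (and $M':=G\cap\widetilde{A}_0\unlhd G$ when needed), so $S\le M\le M'\le G$ and $C_G(R)\le M$. The crucial claim is that $\Gamma_B(M)$ is again complete. This follows by running the Lusztig-induction computation in the proof of Lemma \ref{nondefiningchar} (together with Lemma \ref{no common Sylow tori}) for the finite reductive group $A_0=\mathbf{G}^F$ with $\mathbf{G}$ of adjoint type: for distinct primes $\ell_1\ne\ell_2$ dividing $|S|$ one obtains a nontrivial unipotent character of $A_0$ lying in $B_0(A_0)_{\ell_1}\cap B_0(A_0)_{\ell_2}$, and, since unipotent characters are invariant under diagonal automorphisms and their principal-block membership is controlled by \cite[Theorem A]{KM15}, such a character restricts to $M$ as a sum of nontrivial irreducibles of $M$ at least one of which lies in $B_0(M)_{\ell_1}\cap B_0(M)_{\ell_2}$; thus $\ell_1$ and $\ell_2$ are adjacent in $\Gamma_B(M)$. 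Now $C_G(R)\le M$ and Lemma \ref{NormSylCentUniLiftPrin} give that $B_0(M)_r$ is covered by the unique $r$-block $B_0(G)_r$ of $G$, so Lemma \ref{PrinUniqCovLiftEdge} propagates ``$r$ adjacent to all of $\pi(S)$'' from $\Gamma_B(M)$ to $\Gamma_B(G)$. In the exceptional cases $S\in\{A_2(4),B_2(q),G_2(3^f)\}$ of Theorem \ref{cent-all}$(ii)$ one works with $M'$ in place of $M$; here $\widetilde{A}_0/S$ is a small cyclic group generated by a graph(-field) automorphism, which may permute unipotent characters, so I would finish either with \cite{GAP} (for $A_2(4)$ and $B_2(8)$) or by repeating the field-automorphism argument of Lemma \ref{field-cent} for the remaining automorphism, exactly as in Lemmas \ref{graph-centB2} and \ref{graph-centG2}.

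The step I expect to be the main obstacle is precisely this passage through the diagonal-automorphism layer $S\unlhd G\cap A_0$: one cannot lift the $r$-edges directly from $\Gamma_B(S)$ to $\Gamma_B(G)$, because a diagonal automorphism may centralize the cyclic group $R$ (which is exactly why Theorem \ref{cent-all} only yields $C_A(R)\le A_0$ and not $C_A(R)\le S$). Circumventing it requires knowing that $\Gamma_B(M)$ is complete for \emph{every} $M$ with $S\le M\le A_0$, and the technical heart of that is the assertion that the compatibility of Lusztig induction with $\ell$-blocks from \cite{KM15} descends along the isogeny $\mathbf{G}_{\rm sc}\to\mathbf{G}_{\rm ad}$, i.e.\ that unipotent characters of $\mathbf{G}^F_{\rm ad}$ restrict to $M$ with a constituent in the principal $\ell$-block. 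Once this, Theorems \ref{graphofsimple} and \ref{cent-all}, and the covering Lemmas \ref{NormSylCentUniLiftPrin}--\ref{PrinUniqCovLiftEdge} are in hand, the remaining bookkeeping is routine.
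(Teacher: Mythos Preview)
Your proposal is correct and follows essentially the same route as the paper: dispose of alternating/sporadic/Tits via Propositions~\ref{AltCase} and~\ref{SporCase}; for Lie type, exhibit the two ``universal'' primes $p$ (defining characteristic) and $r$ (from Theorem~\ref{cent-all}), show each is joined to every other vertex of $\Gamma_B(G)|_{\pi(S)}$ by lifting edges from an intermediate group via Lemmas~\ref{NormSylCentUniLiftPrin} and~\ref{PrinUniqCovLiftEdge}, and conclude. Two places where the paper sharpens what you sketched are worth noting. First, the passage through $S\le M\le A_0$ that you flag as the main obstacle is handled cleanly by Theorem~\ref{unipotentcharacters} (=\cite[Theorem~17.1]{CE}): unipotent characters of $A_0$ restrict \emph{irreducibly} to $S$ (hence to $M$) with the $\ell$-block partition preserved, so the nontrivial common unipotent constituent in $B_0(A_0)_{\ell_1}\cap B_0(A_0)_{\ell_2}$ restricts to a single irreducible in $B_0(M)_{\ell_1}\cap B_0(M)_{\ell_2}$; your ``at least one of which'' is then automatic and needs no separate argument. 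Second, in the exceptional cases of Theorem~\ref{cent-all}(ii), the lemmas you cite (Lemmas~\ref{graph-centB2} and~\ref{graph-centG2}) are centralizer statements, whereas what you actually need at that point is the completeness of $\Gamma_B(M')$ for $S\le M'\le\widetilde{A}_0$; the paper gets this in Lemmas~\ref{LietriangleB2} and~\ref{LietriangleG2} by exhibiting a specific unipotent character \emph{not} fixed by the graph automorphism (so it induces irreducibly to $\widetilde{A}_0$, and Lemma~\ref{IndIrrUniCovBl} gives the unique cover for $r$).
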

 \begin{proof} For the case where $S$ is not of Lie type, Theorem \ref{Almostsimplewithtriangle} follows by
Propositions \ref{AltCase} and \ref{SporCase}. The case where $S=A_6\cong A_1(9)$ can be directly checked by \cite{GAP}.
For the case  where $S$ is a simple group of Lie type, this is done by the  subsequent Proposition \ref{AlmostLiewithtriangle}.
 \end{proof}

\subsection{Principal blocks and diagonal automorphisms}

From now on, we investigate block graphs of almost simple groups with socle isomorphic to a simple group of Lie type.
Here we mention a fundamental result.

\begin{theorem} \label{unipotentcharacters}
Let $S$ be a finite simple group of Lie type defined over a finite field of characteristic $p$, and let $\ell$ be
 a prime different from $p$. Denote by ${\rm Uch}(S)$ the set of unipotent characters of $S$.
 Then the restriction map from  ${\rm Uch}(A_0)$ to ${\rm Uch}(S)$ is bijective and keeps their
 partitions into $\ell$-blocks.
 \begin{proof} This is \cite[Theorem 17.1]{CE}.
\end{proof}
 \end{theorem}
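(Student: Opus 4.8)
The plan is to reduce the statement to a comparison between the unipotent characters of $A_0=(\mathbf{G}/Z(\mathbf{G}))^F$ and those of $S=O^{p'}(A_0)$, exploiting the relationship recalled in \S\ref{sub:confin}. First I would record two standard reductions. The kernel of the natural epimorphism $\pi\colon\mathbf{G}\to\mathbf{G}/Z(\mathbf{G})$ meets $\mathbf{G}^F$ in the $p'$-group $Z(\mathbf{G})^F$, which lies in the kernel of every unipotent character of $\mathbf{G}^F$ (such characters being constituents of the $R_{\mathbf{T}}^{\mathbf{G}}(1_{\mathbf{T}^F})$ and $Z(\mathbf{G})\subseteq\mathbf{T}$ for every maximal torus $\mathbf{T}$); hence $\mathrm{Uch}(\mathbf{G}^F)$ is canonically identified with $\mathrm{Uch}(S)$, $\ell$-block by $\ell$-block. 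Also $\pi(\mathbf{G}^F)=S$, and $A_0/S$ is an abelian $p'$-group (a quotient of $Z(\mathbf{G})$ obtained via the Lang map). So it suffices to prove that restriction gives an injection $\mathrm{Uch}(A_0)\to\mathrm{Irr}(S)$ with image $\mathrm{Uch}(S)$ that preserves the partition into $\ell$-blocks.

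For the bijection I would use the action of the linear characters of $A_0/S$. By the standard parametrisation of $\mathrm{Hom}(A_0,\mathbb{C}^\times)$ through $F$-fixed central elements of a dual group, a nontrivial linear character $\lambda$ of $A_0/S$ carries the Lusztig series $\mathcal{E}(A_0,1)$ into a different series $\mathcal{E}(A_0,z)$ with $z\ne 1$; in particular $\chi\otimes\lambda$ is not unipotent, so $\chi\otimes\lambda\ne\chi$ for every $\chi\in\mathrm{Uch}(A_0)$ and every $1\ne\lambda$. Gallagher's theorem then gives $\langle\chi_S,\chi_S\rangle=\sum_{\lambda}\langle\chi,\chi\otimes\lambda\rangle=1$, so $\chi_S\in\mathrm{Irr}(S)$, and since $\chi\otimes\lambda$ is never unipotent for $\lambda\ne 1$, two distinct unipotent characters of $A_0$ cannot share the same restriction. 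Finally $|\mathrm{Uch}(A_0)|=|\mathrm{Uch}(S)|$, both being the number of unipotent characters attached to $(W_{\mathbb{G}},\phi)$, which is independent of the isogeny type; hence this injection is onto $\mathrm{Uch}(S)$. (Equivalently, one invokes Lusztig's theorem that diagonal automorphisms fix every unipotent character of $S$, whence each $\chi_0\in\mathrm{Uch}(S)$ is $A_0$-stable, extends to $A_0$, and has exactly one unipotent extension.)

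It then remains to match $\ell$-blocks, which is the substantive point. One direction is soft: if $\chi_1,\chi_2\in\mathrm{Uch}(A_0)$ lie in the same $\ell$-block $B$ of $A_0$, then $B$ covers the $\ell$-blocks $b_1,b_2$ of $S$ containing $(\chi_1)_S,(\chi_2)_S$; since these restrictions are $A_0$-stable the blocks $b_1,b_2$ are $A_0$-stable, while the blocks of $S$ covered by $B$ form a single $A_0$-orbit, so $b_1=b_2$. The converse — that all unipotent characters of $A_0$ lying over a fixed $\ell$-block of $S$ lie in a single $\ell$-block of $A_0$ — is the part I expect to be the real obstacle, since plain Clifford theory is insufficient when $\ell\mid|A_0/S|$. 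For this I would appeal to the structure of the $\ell$-blocks containing unipotent characters: by the theory of unipotent blocks (Broué–Michel, Cabanes–Enguehard) the $\ell$-block of a unipotent character $\chi$ is governed by its $e$-Harish–Chandra series, i.e.\ by the $\mathbf{G}^F$-conjugacy class of the $e$-cuspidal pair $(\mathbf{L},\lambda)$ to which it belongs, where $e=e_\ell(q)$; this datum is combinatorial — it is read off from $W_{\mathbb{G}}\phi$ and the generic group — and so is transported correctly along the isogeny $\mathbf{G}\to\mathbf{G}/Z(\mathbf{G})$. A more hands-on alternative is to check that the central characters $\omega_\chi$ coincide, by evaluating them on class sums of semisimple $\ell'$-elements and using Lusztig's formula for $\chi(s)$ in terms of the unipotent characters of $C_{\mathbf{G}}(s)$, whose degrees are again isogeny-invariant. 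Assembling these ingredients is precisely the content of \cite[Theorem 17.1]{CE}.
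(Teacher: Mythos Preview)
Your proposal is correct and ultimately rests on the same source as the paper: the paper's proof is nothing more than the single line ``This is \cite[Theorem 17.1]{CE}'', and you end at exactly the same citation. What you have added is an outline of the argument behind that reference --- the isogeny-independence of unipotent characters, the Gallagher/Clifford argument for irreducibility of the restriction, and the identification of $\ell$-blocks via $e$-Harish--Chandra series --- none of which the paper spells out. So there is no genuine difference in route, only in the level of detail: the paper treats the statement as a black-box citation, while you unpack what the black box contains.
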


\begin{lemma}\label{pcoverofalmost}
Let $S\leq H\unlhd G\leq A$, where $A={\rm Aut}(S)$ and  $S$ is
 a finite simple group of Lie type over a finite field of characteristic $p$. Then the principal $p$-block of $G$
 is the unique $p$-block covering the principal $p$-block of $H$.
 \end{lemma}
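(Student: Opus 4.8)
The plan is to reduce the statement to a centralizer computation in $A:={\rm Aut}(S)$ and then apply Lemma~\ref{NormSylCentUniLiftPrin} together with transitivity of block covering. Since $S={\rm Inn}(S)$ is normal in $A$, it is normal in $G$ and in $H$, and the principal block $B_0(H)_p$ covers $B_0(S)_p$. Hence every $p$-block of $G$ covering $B_0(H)_p$ also covers $B_0(S)_p$, so it suffices to prove that $B_0(G)_p$ is the unique $p$-block of $G$ covering $B_0(S)_p$. By Lemma~\ref{NormSylCentUniLiftPrin} applied to $S\unlhd G$, this follows once we produce some $P\in{\rm Syl}_p(S)$ with $C_G(P)\le S$; as $C_G(P)\le C_A(P)$, it is enough to establish the stronger assertion $C_A(P)\le S$.

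To do this I would use the standard construction: write $S=O^{p'}(\mathbf{G}^F)$ with $\mathbf{G}$ simple of adjoint type and $F$ a Steinberg endomorphism, fix an $F$-stable Borel subgroup $\mathbf{B}=\mathbf{U}\rtimes\mathbf{T}$, and set $P=\mathbf{U}^F$. Since $|\mathbf{G}^F:S|$ is prime to $p$, this $P$ is a Sylow $p$-subgroup of $\mathbf{G}^F$, hence of $S$, and $P\le S$. Now let $x\in A$ centralise $P$. Then $x$ normalises $\mathbf{G}^F\unlhd A$ and normalises $P$, so it normalises $N_{\mathbf{G}^F}(\mathbf{U}^F)=\mathbf{B}^F$; thus $x\in N_A(\mathbf{B}^F)=\mathbf{B}^F\rtimes\Psi$, where $\Psi$ is the group of graph--field automorphisms fixing the pair $(\mathbf{B},\mathbf{T})$. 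Write $x=ut\psi$ with $u\in\mathbf{U}^F$, $t\in\mathbf{T}^F$ and $\psi\in\Psi$. The crucial step is to pass to the Frattini quotient $\overline{\mathbf{U}}=\mathbf{U}^F/[\mathbf{U}^F,\mathbf{U}^F]$, which is the direct sum of the simple root subgroups: on $\overline{\mathbf{U}}$ the inner part (conjugation by $ut$) acts diagonally through the simple root characters of $\mathbf{T}$, while $\psi$ acts by permuting the simple root directions (via its diagram part) followed by a $p$-power map (via its field part). Comparing the two actions forces the diagram part and the field part of $\psi$ to be trivial, so $\psi=1$ and $x\in\mathbf{B}^F\le\mathbf{G}^F$. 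Finally $x\in C_{\mathbf{G}^F}(\mathbf{U}^F)=Z(\mathbf{U}^F)$, using that $\mathbf{G}$ is adjoint (so $Z(\mathbf{G})=1$ and $\bigcap_{\alpha}\ker\alpha=1$), whence $C_A(P)=Z(\mathbf{U}^F)\le S$.

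The very twisted groups ${}^2B_2(q)$, ${}^2G_2(q)$ and ${}^2F_4(q)$, for which $F$ must be taken very twisted, are handled by the same argument with the appropriate Frobenius (here $\mathrm{Out}(S)$ is cyclic, generated by a field automorphism, and there is no diagram contribution), and the single group ${}^2F_4(2)'$ can be checked directly. I expect the centralizer claim $C_A(P)\le S$ to be the main obstacle: although it is essentially folklore --- and for odd $p$ its $p$-part is governed by \cite[Theorem A]{Gr82} --- making it precise requires careful bookkeeping of how diagonal, field and graph automorphisms act on the unipotent radical, the delicate case being a field automorphism of $p$-power order (which exists when $p\mid f$) that must be prevented from centralising a full Sylow $p$-subgroup; this is exactly what the analysis on $\overline{\mathbf{U}}$ accomplishes.
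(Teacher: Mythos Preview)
Your reduction via transitivity of block covering is sound, and the structural claim $C_A(P)=Z(\mathbf{U}^F)\le S$ is true; your sketch of how graph, field and diagonal automorphisms are eliminated on the Frattini quotient of $\mathbf{U}^F$ is essentially correct, including the delicate case $p\mid f$.

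However, the paper's proof is considerably shorter and follows a different path. Rather than proving the stronger inclusion $C_A(P)\le S$ from scratch, the paper simply cites \cite[Lemma~2.2]{BZ08} for the weaker fact that $C_A(P)$ is a $p$-group. From this, for any $P_H\in{\rm Syl}_p(H)$ containing $P$ one gets $C_G(P_H)\le C_G(P)\le C_A(P)$ is a $p$-group, and the conclusion then follows directly from \cite[Lemma~2.3]{LZ17} (a variant of Lemma~\ref{NormSylCentUniLiftPrin} requiring only that the centraliser be a $p$-group, not that it lie in the normal subgroup). So the paper works directly with $H$ rather than reducing to $S$, and replaces your entire second and third paragraphs by two citations. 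What your approach buys is self-containment: you avoid invoking \cite{BZ08} and \cite{LZ17}, at the price of reproving (a strengthening of) their content. What the paper's approach buys is brevity: the structural work on $\Aut(S)$ that you flag as ``the main obstacle'' is simply outsourced.
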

\begin{proof}  Let $P\in {\rm Syl}_p(S)$ and $P_H\in {\rm Syl}_p(H)$ with $P\subseteq P_H$.
By \cite[Lemma 2.2]{BZ08},
we know that $C_A(P)$ is a $p$-group.  Hence $C_G(P_H)\leq C_G(P)\leq C_A(P)$ is a $p$-group.
Now the lemma follows by \cite[Lemma 2.3]{LZ17}.
\end{proof}

\begin{lemma} \label{diagonal-simple}
Let $S\leq G\leq A_0$, where  $S$ is a finite simple group of Lie type over a finite field of characteristic $p$.
Then $\Gamma_B(G)$ is complete.
\end{lemma}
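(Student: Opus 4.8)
The plan is to prove that any two distinct primes $\ell_1,\ell_2$ dividing $|G|$ are adjacent in $\Gamma_B(G)$. Since $|A_0:S|$ divides $|S|$ (see the column $d$ in Table \ref{data}), we have $\pi(G)=\pi(A_0)=\pi(S)$; and since the group of diagonal automorphisms $A_0/S$ is abelian, every subgroup between $S$ and $A_0$ is normal in $A_0$, so in fact $S\unlhd G\unlhd A_0$. By Proposition \ref{graphLie} the graph $\Gamma_B(S)$ is complete, hence $\ell_1$ and $\ell_2$ are adjacent in $\Gamma_B(S)$; the whole task is to lift this edge from $S$ to $G$.

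I would first treat the case where one of the two primes, say $\ell_1$, is the defining characteristic $p$. By Lemma \ref{pcoverofalmost} applied with $H=S$, the principal $p$-block $B_0(G)_p$ is the unique $p$-block of $G$ covering $B_0(S)_p$. Since $p$ and $\ell_2$ are adjacent in $\Gamma_B(S)$, Lemma \ref{PrinUniqCovLiftEdge} (with $N=S$) immediately yields that $p$ and $\ell_2$ are adjacent in $\Gamma_B(G)$.

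For the remaining case $p\notin\{\ell_1,\ell_2\}$, I would exploit a unipotent character, which behaves well along the chain $S\leq G\leq A_0$. By Lemma \ref{nondefiningchar} there is a nontrivial unipotent character $\psi\in{\rm Uch}(S)$ with $\psi\in B_0(S)_{\ell_1}\cap B_0(S)_{\ell_2}$. By Theorem \ref{unipotentcharacters} the restriction map ${\rm Uch}(A_0)\to{\rm Uch}(S)$ is a bijection preserving the partitions into $\ell$-blocks; hence there is $\widetilde\psi\in\Irr(A_0)$ with $\widetilde\psi|_S=\psi$, and, comparing with the trivial character (which is also unipotent), $\widetilde\psi\in B_0(A_0)_{\ell_i}$ for $i=1,2$. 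As $\widetilde\psi|_S$ is irreducible, so is $\chi:=\widetilde\psi|_G\in\Irr(G)$, and $\chi\neq 1_G$. It then remains to see that $\chi\in B_0(G)_{\ell_i}$. For this I would use that $G\unlhd A_0$: the block of $G$ containing $\chi$ is covered by $B_0(A_0)_{\ell_i}$; the blocks of $G$ covered by the fixed block $B_0(A_0)_{\ell_i}$ form a single $A_0$-orbit; and $B_0(G)_{\ell_i}$ is covered by $B_0(A_0)_{\ell_i}$ and is $A_0$-invariant. Hence that orbit is $\{B_0(G)_{\ell_i}\}$, so $\chi\in B_0(G)_{\ell_i}$. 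Therefore $\chi\in(B_0(G)_{\ell_1}\cap B_0(G)_{\ell_2})\setminus\{1_G\}$, and $\ell_1$ and $\ell_2$ are adjacent in $\Gamma_B(G)$, finishing the argument.

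The step I expect to require the most care is the final passage of the second case. Unlike for the defining prime, for $\ell\neq p$ there is in general no unique $\ell$-block of $G$ covering $B_0(S)_\ell$ (this is precisely the phenomenon that forces the work of Section \ref{centralizers} elsewhere in the paper), so one cannot simply reuse Lemma \ref{PrinUniqCovLiftEdge} with $N=S$; instead it is the normality $G\unlhd A_0$ together with the transitivity of the $A_0$-action on the set of blocks of $G$ covered by a fixed block of $A_0$ that pins $\chi$ down to the principal block. One should also take a moment to confirm that Theorem \ref{unipotentcharacters} delivers both the irreducible extendibility of $\psi$ to $A_0$ and the matching of principal blocks under restriction.
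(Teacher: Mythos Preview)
Your proposal is correct and follows essentially the same route as the paper: split into the defining-characteristic case (handled via Lemma~\ref{pcoverofalmost} and Lemma~\ref{PrinUniqCovLiftEdge}) and the non-defining case (handled by lifting a common unipotent character from $S$ to $A_0$ via Theorem~\ref{unipotentcharacters} and restricting to $G\unlhd A_0$). Your argument is in fact more explicit than the paper's at the final step, where the paper simply asserts that $\chi_G$ lies in $B_0(G)_{\ell_i}$; your use of the single-$A_0$-orbit property of the blocks covered by $B_0(A_0)_{\ell_i}$, together with the $A_0$-invariance of $B_0(G)_{\ell_i}$, is exactly the justification that is implicit there.
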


\begin{proof} Clearly, we have $\pi(G)=\pi(S)$.
Let $\ell_1,\ell_2\in \pi(S)$. If $p\nmid \ell_1\ell_2$, then
by Lemma \ref{nondefiningchar}, there is a nontrivial unipotent character of $S$ lying in both principal
 $\ell_1$- and $\ell_2$-blocks of $S$. It follows from Theorem \ref{unipotentcharacters} that there is a nontrivial unipotent character $\chi$ of $A_0$ lying in both principal $\ell_1$- and $\ell_2$-blocks of $A_0$.
 Since $A_0/S$ is abelian, we have that $G$ is normal in $A_0$.
 Hence $\chi_G\in {\rm Irr}(G)$ is in the principal $\ell_i$-block of $G$ for $i=1,2$, namely $\ell_1$ and $\ell_2$ are adjacent in $\Gamma_B(G)$
  in this case.

Now we may assume that $\ell_1=p$. Then by Lemma \ref{pcoverofalmost},
the principal $p$-block of $G$ is the unique $p$-block covering the principal $p$-block of $S$.
Thus  $\ell_1$ and $\ell_2$ are also adjacent in $\Gamma_B(G)$  by Lemma \ref{PrinUniqCovLiftEdge} and the completeness of the block graph of $S$, and
we are done.
\end{proof}

\subsection{Almost simple groups with Lie-type socle}

In this subsection, we shall find a triangle
in the block graph of an almost simple group according to the results in the previous section.

We first investigate some almost simple groups with socle
  $S$ which is one of the groups $B_2(2^f)$ or $G_2(3^f)$.
We always let $(\mathbf{G}, F)$ be the $F_\gamma$-set-up for $S$.
The notation and symbols for unipotent characters follow \cite{Car}.
They make sense due to Theorem \ref{unipotentcharacters}.

\begin{lemma}\label{LietriangleB2} Let $S\leq G\leq \widetilde{A}_0$, where $S=B_2(q)$
with $q=2^f$.  Let $r$ be a prime as in Theorem \ref{cent-all}.
 Then $\Gamma_B(G)|_{\{p,r,\ell\}}$ is a triangle for any prime divisor $\ell$ of $|S|$ different from $p$ and $r$.
\end{lemma}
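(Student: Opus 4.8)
Throughout write $p=2$ and $q=2^f$. The claim is exactly that $p$, $r$ and $\ell$ are pairwise adjacent in $\Gamma_B(G)$, these being the only vertices of $\Gamma_B(G)|_{\{p,r,\ell\}}$. The two edges incident to $p$ are handled uniformly. By Proposition~\ref{graphLie} the graph $\Gamma_B(S)$ is complete, so in it $p$ is adjacent to every other prime dividing $|S|$. Since $S\unlhd G\le A=\widetilde A_0$, Lemma~\ref{pcoverofalmost} applied with $H=S$ shows that $B_0(G)_p$ is the unique $p$-block of $G$ covering $B_0(S)_p$; hence Lemma~\ref{PrinUniqCovLiftEdge} lifts every edge at $p$ from $\Gamma_B(S)$ to $\Gamma_B(G)$, and in particular $p\sim r$ and $p\sim\ell$ in $\Gamma_B(G)$.

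It remains to produce the edge $r\sim\ell$. As $r$ is a Zsigmondy prime it does not divide $|\widetilde A_0:S|$, so $R\in{\rm Syl}_r(S)$ is also a Sylow $r$-subgroup of $G$ and $C_G(R)\le C_A(R)$. Suppose first that $G\le\widehat A_0$, i.e.\ $G$ induces no graph automorphism of $S$. Then Lemma~\ref{graph-centB2} gives $C_G(R)\le C_{\widehat A_0}(R)\le S$, so by Lemma~\ref{NormSylCentUniLiftPrin} the block $B_0(S)_r$ is covered by a unique $r$-block of $G$, namely $B_0(G)_r$. Applying Lemma~\ref{PrinUniqCovLiftEdge} with $r$ in the role of the distinguished prime, together with the completeness of $\Gamma_B(S)$, we conclude that $r$ is adjacent in $\Gamma_B(G)$ to every other prime dividing $|S|$, in particular to $\ell$. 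This disposes of the case where $G$ contains no graph automorphism.

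The remaining case, where $G$ does contain a graph automorphism of $S$, is the genuine obstacle: here $C_A(R)$ need not lie in $S$ — Theorem~\ref{cent-all} only places it in $\widetilde A_0=A$ — so the covering lemmas are unavailable. I would pass to unipotent characters. Every prime $\ell\neq p$ dividing $|B_2(q)|$ divides $\Phi_1(q)\Phi_2(q)\Phi_4(q)$, so both $e_r(q)$ and $e_\ell(q)$ lie in $\{1,2,4\}$; and each of $1,2,4$ is a regular number of $B_2(q)$, since the associated Sylow torus has rank $2$ and is therefore a self-centralizing maximal torus of $\mathbf G$. By Theorem~\ref{St} the Steinberg character $St$ of $S$ thus belongs to $B_0(S)_r\cap B_0(S)_\ell$, and being fixed by all automorphisms of $S$ it extends to an irreducible character of $G$. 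I would then show that its canonical extension — the Steinberg character of $G$ — lies in $B_0(G)_r\cap B_0(G)_\ell$; the cleanest route is to realise it (up to sign) as a constituent of a Lusztig induction from a $G$-stable maximal torus all of whose constituents lie in the principal block, adapting the computation in the proof of Theorem~\ref{St2} to the relevant disconnected reductive group, or, since the only parameters in which a new phenomenon could arise are $q\in\{4,8\}$, by verifying these with \cite{GAP}. Once this extension is seen to lie in $B_0(G)_r\cap B_0(G)_\ell$, the edge $r\sim\ell$ is established and the proof is complete.

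I expect the main difficulty to be precisely this last step: transferring a principal-block unipotent character of $S$ up to $G$ across a graph automorphism, where uniqueness of the covering block fails. A partial shortcut worth trying first is to set $G_0=G\cap\widehat A_0$, which has index $2$ in $G$ and still contains $S$: by the second paragraph $\Gamma_B(G_0)|_{\{p,r,\ell\}}$ is already a triangle, and since $C_{\widehat A_0}(R)\le S$ forces $C_{\widehat A_0}(R)=C_S(R)$ and hence $|C_A(R):C_S(R)|\le 2$, only a single index-$2$ step $G_0\unlhd G$ remains to be crossed; carrying the edge $r\sim\ell$ across that step is what the Steinberg-character argument (or the finite check) supplies.
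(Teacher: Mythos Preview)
Your treatment of the edges incident to $p$, and of the case $G\le\widehat A_0$, is correct and essentially matches the paper's reasoning. The genuine gap is exactly where you locate it: the case where $G$ contains a graph automorphism. There your plan is to use the Steinberg character $St$, which is $A$-invariant, and to argue that some extension of $St$ to $G$ lies in $B_0(G)_r\cap B_0(G)_\ell$. But this is precisely the step you cannot complete with the tools in the paper: when the covering block is not unique, there is no mechanism here for selecting which extension lands in the principal block, and the suggestion to ``adapt Lusztig induction to the disconnected group'' or to reduce to $q\in\{4,8\}$ is neither carried out nor justified (nothing singles out small $q$).

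The paper avoids this difficulty entirely by choosing the \emph{opposite} kind of character. Rather than an $A$-invariant unipotent character, it takes the unipotent character $\chi$ of $S$ labelled by the symbol $\begin{pmatrix}1&2\\ \multicolumn{2}{c}{0}\end{pmatrix}$, which lies in $B_0(S)_r$ (as a constituent of $R_{\mathbf T}^{\mathbf G}(1_{\mathbf T^F})$ for the relevant Sylow torus $\mathbf T$, by \cite[Theorem~A]{KM15}), but whose inertia group in $\widetilde A_0$ is exactly $S$ by \cite[Theorem~2.5]{Ma08}. Hence $\chi^G$ is irreducible, and Lemma~\ref{IndIrrUniCovBl} forces $B_0(G)_r$ to be the \emph{unique} $r$-block of $G$ covering $B_0(S)_r$. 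Now Lemma~\ref{PrinUniqCovLiftEdge} and the completeness of $\Gamma_B(S)$ give the edge $r\sim\ell$ for every $\ell$. The point is that a non-invariant character in the principal block is far more useful here than an invariant one: irreducible induction gives unique covering for free, whereas extending an invariant character gives you several candidates with no way to pick out the principal block.
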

 \begin{proof} Note that $A_0=S$ and $B_0(G)_2$ is the unique $2$-block of $G$ covering $B_0(S)_2$.
 The lemma directly follows by Proposition \ref{graphLie} if $G=S$, and so we may assume that $G=\widetilde{A}_0$.

 Let $\chi,\chi'$ be the unipotent characters of $S$ corresponding to the symbols
  $\begin{pmatrix}
    1 & 2\\
     \multicolumn{2}{c} {0} \\
\end{pmatrix}$  and   $\begin{pmatrix}
    0 & 1\\
     \multicolumn{2}{c} {2} \\
\end{pmatrix}$, respectively.
Let $\mathbf{T}$ be a maximal torus of $\mathbf{G}$ containing a Sylow 2-torus of $\mathbf{G}$ if $f\neq 3$
or a Sylow 1-torus of $\mathbf{G}$ if
$f=3$.  Since $1,2$ are regular numbers of $S$ by \cite[Lemma 3.17]{KM15},
it follows that $\mathbf{T}$ is also a Sylow 2-split Levi subgroup of $\mathbf{G}$ if $f\neq 3$
or a  Sylow 1-split Levi subgroup of $\mathbf{G}$ otherwise.
Using \cite{GH+96}, we know that  $\chi$ and $\chi'$ are
irreducible constituents of $R_{\mathbf{T}}^\mathbf{G}(1_{\mathbf{T}^F})$.
By \cite[Theorem A]{KM15}, both lie in the principal $r$-block of $S$. However, by \cite[Theorem 2.5]{Ma08}, the inertia group of $\chi$
in $G$ is exactly $S$. Hence $\chi$ induces irreducibly to $G$, and so by Lemma \ref{IndIrrUniCovBl},
$B_0(G)_r$ is the unique $r$-block of $G$ covering $B_0(S)_r$.
Now the lemma follows  by Lemma \ref{PrinUniqCovLiftEdge} and the completeness of the block graph of $S$.
\end{proof}

Considering the unipotent characters $\phi_{1,3}',\phi_{1,3}''$ of $G_2(q)$, we may similarly conclude
the following result.

\begin{lemma}\label{LietriangleG2} Let $S\leq G\leq \widetilde{A}_0$, where $S=G_2(3^f)$. Let $r$ be a prime as in Theorem \ref{cent-all}.
 Then $\Gamma_B(G)|_{\{p,r,\ell\}}$ is a triangle for any prime divisor $\ell$ of $|S|$ different from $p$ and $r$.
\end{lemma}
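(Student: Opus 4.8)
The plan is to follow the same template as the proof of Lemma \ref{LietriangleB2}, transferring it verbatim to $S=G_2(q)$ with $q=3^f$. First I would note that $A_0=S$ for $G_2(q)$, so that by Lemma \ref{pcoverofalmost} the principal $p$-block $B_0(G)_3$ is the unique $3$-block of $G$ covering $B_0(S)_3$; the case $G=S$ is immediate from Proposition \ref{graphLie}, so one may assume $G=\widetilde{A}_0$ (that is, $G$ contains the graph automorphism, which exists precisely for $p=3$). This reduces the problem, via Lemma \ref{PrinUniqCovLiftEdge} and the completeness of $\Gamma_B(S)$ (Proposition \ref{graphLie}), to exhibiting one non-trivial irreducible character of $G$ lying in both the principal $r$-block and the principal $\ell$-block, for the fixed prime $r$ from Theorem \ref{cent-all}(ii)(4) and an arbitrary $\ell \neq p,r$.

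Second, the two unipotent characters $\phi_{1,3}'$ and $\phi_{1,3}''$ of $G_2(q)$ play the role of $\chi,\chi'$. The point is that these are the two unipotent characters lying in the principal series that are swapped by the (exceptional) graph automorphism, and they are constituents of $R_{\mathbf{T}}^{\mathbf{G}}(1_{\mathbf{T}^F})$ for $\mathbf{T}$ a maximal torus containing a Sylow $2$-torus of $\mathbf{G}$ (here $e=2$ is the relevant regular number: $r$ is a Zsigmondy prime of $3^f+1$, so $e_r(q)=2$). Since $2$ is a regular number of $G_2(q)$ by \cite[Lemma 3.17]{KM15}, this $\mathbf{T}$ is a Sylow $2$-split Levi subgroup, so $(\mathbf{T},1_{\mathbf{T}^F})$ is a Sylow $e$-cuspidal pair; then Kessar--Malle \cite[Theorem A]{KM15} places every constituent of $R_{\mathbf{T}}^{\mathbf{G}}(1_{\mathbf{T}^F})$ — in particular $\phi_{1,3}'$ — in the principal $r$-block of $\mathbf{G}^F$, hence of $S$. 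The fact that $\phi_{1,3}'$ and $\phi_{1,3}''$ are constituents of this Lusztig induction can be read off from CHEVIE \cite{GH+96} exactly as in Lemma \ref{LietriangleB2}.

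Third, I would invoke the result of Malle \cite[Theorem 2.5]{Ma08} (or the analogous statement used for $B_2$) to conclude that the inertia group of $\phi_{1,3}'$ in $G=\widetilde{A}_0$ is exactly $S$: the graph automorphism interchanges $\phi_{1,3}'$ and $\phi_{1,3}''$, and the field automorphisms fix each. Hence $\phi_{1,3}'$ induces irreducibly to $G$, so by Lemma \ref{IndIrrUniCovBl} the block $B_0(G)_r$ is the unique $r$-block of $G$ covering $B_0(S)_r$. Combining the two unique-cover statements (for $p=3$ via Lemma \ref{pcoverofalmost}, and for $r$ via the induced character just constructed) with Lemma \ref{PrinUniqCovLiftEdge} and the completeness of $\Gamma_B(S)$ gives that $\{p,r\}$, $\{p,\ell\}$ and $\{r,\ell\}$ are all edges of $\Gamma_B(G)$, i.e. $\Gamma_B(G)|_{\{p,r,\ell\}}$ is a triangle.

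The main obstacle is the same as in the $B_2$ case: one must be confident that the specific unipotent characters $\phi_{1,3}',\phi_{1,3}''$ genuinely appear as constituents of $R_{\mathbf{T}}^{\mathbf{G}}(1_{\mathbf{T}^F})$ for the relevant torus, and that their inertia group in $\widetilde A_0$ is exactly $S$ (so induction is irreducible). Both are bookkeeping facts about the character table of $G_2(q)$ and the action of its outer automorphisms — straightforward with \cite{Car}, \cite{GH+96} and \cite{Ma08}, but the one place where a reader needs to trust a table lookup rather than a general argument. I would also take brief care over the small case $q=3$ (where $f=1$ and there may be no Zsigmondy prime of $3^f+1$ of the required kind, cf.\ the discrete entry $G_2(3)$, $r=13$ in Table \ref{discretecases}), handling it either by the same argument with $r=13$ or by a direct check with \cite{GAP}.
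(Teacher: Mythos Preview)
Your proposal is correct and takes essentially the same approach as the paper, whose proof consists of the single sentence ``Considering the unipotent characters $\phi_{1,3}',\phi_{1,3}''$ of $G_2(q)$, we may similarly conclude the following result.'' You have faithfully expanded that sketch: the same characters, the same appeal to \cite[Theorem A]{KM15} and \cite[Theorem 2.5]{Ma08}, and the same reduction via Lemmas \ref{IndIrrUniCovBl} and \ref{PrinUniqCovLiftEdge}; your caution about the discrete case $q=3$ (where $r=13$ from Table \ref{discretecases}) is appropriate and not explicitly addressed in the paper either.
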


\begin{proposition}\label{AlmostLiewithtriangle}
Let $S\leq G\leq A$, where $A={\rm Aut}(S)$ and
 $S$ is a simple group of Lie type defined over a finite field of characteristic $p$.
 Let $r$ be a prime as in Theorem \ref{cent-all}.
 Then $\Gamma_B(G)|_{\{p,r,\ell\}}$ is a triangle  for any prime divisor $\ell$ of $|S|$ different from $p$ and $r$.
\end{proposition}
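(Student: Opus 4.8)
The plan is to obtain the three edges $\{p,\ell\}$, $\{p,r\}$ and $\{r,\ell\}$ of $\Gamma_B(G)|_{\{p,r,\ell\}}$ by transporting completeness of the block graph of a well-chosen normal subgroup of $G$ up to $G$ through the unique-covering criterion of Lemma \ref{PrinUniqCovLiftEdge}. Since $A_0\unlhd A$, the subgroup $H:=G\cap A_0$ is normal in $G$ with $S\unlhd H\leq A_0$, so $\Gamma_B(H)$ is complete by Lemma \ref{diagonal-simple}; if $G\leq A_0$ then $H=G$ and there is nothing to do, so I would assume $G\not\leq A_0$, which in particular excludes the Mersenne subcase $S=A_1(q)$ with $q+1$ lacking a Zsigmondy prime (there $A=A_0$). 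The two edges through $p$ then come for free: by Lemma \ref{pcoverofalmost} applied to $S\leq H\unlhd G$, the block $B_0(G)_p$ is the unique $p$-block of $G$ covering $B_0(H)_p$, so Lemma \ref{PrinUniqCovLiftEdge} makes $p$ adjacent in $\Gamma_B(G)$ to every prime dividing $|H|$, in particular to $r$ and to $\ell$. Hence the whole content is the edge $\{r,\ell\}$.

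For that edge I would first treat the generic case, where $S$ and $r$ fall under Theorem \ref{cent-all}(i). Reading off Tables \ref{discretecases} and \ref{data}, one checks that $r$ is a Zsigmondy prime with $r\nmid d:=|A_0:S|$, so a Sylow $r$-subgroup $R$ of $H$ is also a Sylow $r$-subgroup of $S$; then $C_G(R)=C_A(R)\cap G\leq A_0\cap G=H$ by Theorem \ref{cent-all}(i), and Lemma \ref{NormSylCentUniLiftPrin} gives that $B_0(G)_r$ is the unique $r$-block of $G$ covering $B_0(H)_r$. A second application of Lemma \ref{PrinUniqCovLiftEdge}, now with $r$ in the role of the distinguished prime and using that $r$ and $\ell$ are adjacent in the complete graph $\Gamma_B(H)$, then makes $r$ and $\ell$ adjacent in $\Gamma_B(G)$, completing the triangle. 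Note that when $d=1$ (for instance for most exceptional types) this simply amounts to $H=S$ and $C_G(R)\leq S$, with no further case split.

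The main obstacle will be the groups in Theorem \ref{cent-all}(ii), for which only $C_A(R)\leq\widetilde{A}_0$ is available instead of $C_A(R)\leq A_0$: these are $S=B_2(2^f)$ and $S=G_2(3^f)$, together with $A_2(4)$ and $B_2(8)$ with $r=7$. Passing from $A_0$ to $\widetilde{A}_0$ in the argument above is of no use, since $G\cap\widetilde{A}_0=G$ here (indeed $\widetilde{A}_0=A$ for $B_2(2^f)$ and $G_2(3^f)$), while $\Gamma_B(G)$ is not a priori complete. For $S=B_2(2^f)$ and $S=G_2(3^f)$ this is precisely what Lemmas \ref{LietriangleB2} and \ref{LietriangleG2} handle: their proofs replace the diagonal argument by two explicit unipotent characters (those attached to the two relevant symbols in the $B_2$-case, resp.\ $\phi_{1,3}'$ and $\phi_{1,3}''$ in the $G_2$-case), which are constituents of $R_{\mathbf{T}}^{\mathbf{G}}(1_{\mathbf{T}^F})$ for $\mathbf{T}$ a maximal torus that is simultaneously a Sylow $e$-split Levi subgroup for a regular number $e\in\{1,2\}$, hence lie in $B_0(S)_r$ by \cite[Theorem A]{KM15}, and whose inertia group in $G$ is exactly $S$ by \cite[Theorem 2.5]{Ma08}; Lemma \ref{IndIrrUniCovBl} then supplies the unique cover of $B_0(S)_r$ in $G$ needed to rerun Lemma \ref{PrinUniqCovLiftEdge}. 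The one remaining group $A_2(4)=L_3(4)$, which is excluded from Lemma \ref{An(q)}, would be checked directly with \cite{GAP}. Assembling the generic case with Lemmas \ref{LietriangleB2} and \ref{LietriangleG2} and this check then proves the proposition.
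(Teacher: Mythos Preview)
Your proposal is correct and follows essentially the same strategy as the paper's proof. The only organizational difference is that the paper first passes through the intermediate subgroup $M=G\cap\widetilde{A}_0$ in all cases (using Theorem~\ref{cent-all} to get $C_G(R_M)\leq M$) and then, for the case~(i) groups $A_n$, $D_n$, $E_6$, $F_4$, descends further to $A_0\cap G$; you instead go directly to $H=G\cap A_0$ for all case~(i) groups and invoke Lemmas~\ref{LietriangleB2} and~\ref{LietriangleG2} directly on $G$ for the case~(ii) groups, relying on your (correct) observation that $\widetilde{A}_0=A$ for $B_2(2^f)$ and $G_2(3^f)$. This is a mild streamlining rather than a different idea.
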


 \begin{proof}
 Use the notation in Theorem \ref{cent-all}, and write $M=G\cap \widetilde{A}_0$.
 Let  $R_M\in {\rm Syl}_r(M)$ be such that $R\subseteq R_M$.
 Clearly, we have $C_G(R_M)\leq C_G(R)\leq C_A(R)$.
 By Theorem \ref{cent-all}, $C_A(R)\leq \widetilde{A}_0$. This implies $C_G(R_M)\leq M$, so that
 $B_0(G)_r$ is the unique $r$-block of $G$ covering $B_0(M)_r$.

By Lemma \ref{pcoverofalmost}, $B_0(G)_p$ is the unique $p$-block of $G$ covering $B_0(M)_p$.
Therefore, the proposition follows from Lemma \ref{PrinUniqCovLiftEdge} if
$\Gamma_B(M)|_{\{p,r,\ell\}}$ is a triangle  for any prime divisor $\ell$ of $|S|$ different from $p$ and $r$.
However, this is true by Lemma \ref{diagonal-simple}  if $S$ has no graph automorphism, and by
Lemmas \ref{LietriangleB2} and \ref{LietriangleG2} if $S$ is one of the groups
 $B_2(2^f)$ or $G_2(3^f)$.
So it remains to consider the cases where $S=A_n(q) (n\geq 2)$,
$D_n(q)$, $F_4(q)$ and $E_6(q)$. The case where $S=A_2(4)$ can be directly checked using \cite{GAP}. For the other cases, we have
$C_G(R)\leq A_0\cap G$ by Lemmas \ref{An(q)}--\ref{graph-centF4},
so that $B_0(M)_r$ is the unique $r$-block of $M$ covering the principal
$r$-block of $A_0\cap G$, and so the proposition follows by Lemma \ref{PrinUniqCovLiftEdge}.
\end{proof}

\section{Triangles and $p$-solvability} \label{solvablewithnotri}

In this section we prove Theorem \ref{groupswithnotriangle}, starting with two straightforward observations that will be used for
a reduction of the proof.

\begin{lemma}\label{AdjacInQuoGraph}
Let $N$ be a normal subgroup of $G$.
If $p$ and $q$ are two distinct primes which are adjacent in $\Gamma_B(G/N)$, then they are adjacent in $\Gamma_B(G)$.
\begin{proof}
Note that
$B_0(G/N)_p$ can be viewed as a subset of a unique $p$-block of $G$.
Moreover, as $B_0(G/N)_p$ contains the trivial character, it follows that ${\rm Irr}(B_0(G/N))_p\subseteq {\rm Irr}(B_0(G))_p$.
Thus if $\chi$ is a non-trivial character in ${\rm Irr}(B_0(G/N))_p\cap {\rm Irr}(B_0(G/N))_r$, then $\chi$ yields a non-trivial character in
${\rm Irr}(B_0(G))_p\cap {\rm Irr}(B_0(G))_r$.
Hence $p$ and $r$ are adjacent in $\Gamma_B(G)$.
\end{proof}
\end{lemma}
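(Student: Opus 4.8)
The plan is to push a non-trivial common character of the two principal blocks of $G/N$ up to $G$ by inflation. The key input, which I would simply quote, is the classical fact that the surjection $G\to G/N$ induces a domination of $p$-blocks: identifying $\Irr(G/N)$ with $\{\chi\in\Irr(G):N\subseteq\kernel\chi\}$ via inflation, the irreducible characters of each $p$-block of $G/N$ all lie in a single $p$-block of $G$, and the principal $p$-block of $G/N$ is dominated by the principal $p$-block of $G$ (see \cite[Chapter 9]{Nav}). The reason the principal block goes to the principal block is that $1_{G/N}$ inflates to $1_G$, so the block of $G$ dominating $B_0(G/N)_p$ contains $1_G$ and hence equals $B_0(G)_p$. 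Consequently $\Irr(B_0(G/N)_p)\subseteq\Irr(B_0(G)_p)$ (via inflation), and likewise with $q$ in place of $p$.

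Granting this, the argument is immediate. First, $p$ and $q$, being vertices of $\Gamma_B(G/N)$, divide $|G/N|$ and hence $|G|$, so they are vertices of $\Gamma_B(G)$. Since $p$ and $q$ are adjacent in $\Gamma_B(G/N)$, there is a non-trivial $\chi\in\Irr(B_0(G/N)_p)\cap\Irr(B_0(G/N)_q)$. Its inflation to $G$ is again a non-trivial irreducible character, and by the previous paragraph it lies in $\Irr(B_0(G)_p)\cap\Irr(B_0(G)_q)$. Hence $p$ and $q$ are adjacent in $\Gamma_B(G)$, as desired.

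I do not expect any genuine obstacle here: the whole content is the statement that inflation carries the principal block into the principal block, which is standard and can be cited outright; the remainder is bookkeeping with the definition of $\Gamma_B$. If one preferred a self-contained argument, the only mildly technical point would be the domination of $p$-blocks under a quotient, which one verifies by comparing the central characters $\omega_B$ on $\ZZ(\mathcal{O}G)$ and $\omega_b$ on $\ZZ(\mathcal{O}[G/N])$ through the natural surjection of group algebras; everything else is routine.
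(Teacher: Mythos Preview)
Your proposal is correct and follows essentially the same route as the paper: both argue that, via inflation, $\Irr(B_0(G/N)_p)\subseteq\Irr(B_0(G)_p)$ because the block of $G$ dominating $B_0(G/N)_p$ contains $1_G$, and then transport a nontrivial common character across. Your write-up is in fact slightly more explicit than the paper's (you spell out domination and the vertex check), but there is no substantive difference.
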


\begin{lemma}\label{CentWrProd}
Let $G=H\wr K=(H_1\times\dots\times H_t)\rtimes K$ with $K\leq {\rm Sym }(t)$.
Furthermore, let $1\ne L_i\leq H_i$ and set $L=L_1\times\dots\times L_t$.
Then $C_G(L)\leq H_1\times\dots\times H_t$.
\begin{proof}
Choose an element $(\bar{h},\sigma)\in C_G(L)$ with $\bar{h}\in H_1\times\dots\times H_t$ and $\sigma\in K$ and assume there exist $1\leq r\ne s\leq t$ with $\sigma(r)=s$.
Consider an element $(l_1,\ldots,l_t)\in L$ such that $l_i=1$ if and only if $i\ne r$.
Then $(l_1,\ldots,l_t)^{(\bar{h},\sigma)}=(l_1^{h_1},\ldots,l_t^{h_t})^\sigma$, where $l_i^{h_i}=1$ if and only if $i\ne r$.
By writing $(l_1^{h_1},\ldots,l_t^{h_t})^\sigma=(l_1',\ldots,l_t')$, we observe that $l_i'=1$ if and only if $i\ne s$.
Thus $r=s$.
In particular, $\sigma$ must be trivial and the result follows.
\end{proof}
\end{lemma}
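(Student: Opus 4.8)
The plan is to show that every element of $C_G(L)$ projects trivially onto $K$, i.e.\ lies in the base group $H_1\times\cdots\times H_t$. Write a general element of $G$ as $g=(h_1,\dots,h_t)\sigma$ with $h_i\in H_i$ and $\sigma\in K\le\mathrm{Sym}(t)$, acting on the base group in the usual way: conjugation by $(h_1,\dots,h_t)$ is coordinate-wise, while $\sigma$ permutes the coordinates of the base group.

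The key step I would isolate first is a \emph{support} observation. For $x=(x_1,\dots,x_t)$ in the base group set $\mathrm{supp}(x)=\{i:x_i\ne 1\}$. If $g=(h_1,\dots,h_t)\sigma$, then $x^g$ again lies in the base group and its $i$-th coordinate is a conjugate of $x_{\sigma(i)}$ (matching the convention $(l_1,\dots,l_t)^{(\bar h,\sigma)}=(l_1^{h_1},\dots,l_t^{h_t})^\sigma$), so $\mathrm{supp}(x^g)=\sigma^{-1}(\mathrm{supp}(x))$; in particular the support has the same size and is literally moved around by $\sigma$. Thus a base-group element supported at a single coordinate $i$ is carried by $g$ to a base-group element supported precisely at one coordinate, namely $\sigma^{-1}(i)$ (or $\sigma(i)$, depending on the fixed convention), which differs from $i$ as soon as $\sigma$ does not fix $i$.

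Now suppose $g=(h_1,\dots,h_t)\sigma\in C_G(L)$ and, for contradiction, $\sigma\ne 1$; choose $i$ with $\sigma(i)=:s\ne i$. Since $L_i\ne 1$ we may pick $1\ne\ell\in L_i$ and form $x=(1,\dots,\ell,\dots,1)\in L$ with $\ell$ in position $i$, so $\mathrm{supp}(x)=\{i\}$. By the support observation $\mathrm{supp}(x^g)=\{s\}\ne\{i\}$, hence $x^g\ne x$, contradicting $g\in C_G(L)$. Therefore $\sigma=1$, so $g\in H_1\times\cdots\times H_t$, which is exactly $C_G(L)\le H_1\times\cdots\times H_t$.

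I expect no real obstacle here: the argument is entirely elementary. The only points needing care are fixing the precise convention for how $K\le\mathrm{Sym}(t)$ permutes the base-group coordinates (so that the ``support is permuted by $\sigma$'' statement is phrased correctly), and noting that the hypothesis $L_i\ne 1$ — for even a single $i$ moved by $\sigma$, and in our setting for all $i$ — is exactly what supplies the witnessing element $x$.
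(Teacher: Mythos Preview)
Your proof is correct and follows essentially the same approach as the paper: both pick an element of $L$ supported at a single coordinate moved by $\sigma$, compute that conjugation by $(\bar h,\sigma)$ shifts that support to a different coordinate, and conclude $\sigma=1$. Your phrasing in terms of the support set is a mild abstraction of the paper's direct coordinate computation, but the argument is the same.
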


 We now prove Theorem \ref{groupswithnotriangle}. Also, we restate it here.

 \begin{theorem} \label{groupswithnotrianglerestate}
 Let $G$ be a finite group and $p$ a prime divisor of $|G|$. If the block graph of
$G$ has no triangle containing $p$, then $G$ is $p$-solvable.
\end{theorem}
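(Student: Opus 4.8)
The plan is to argue by contradiction via a minimal counterexample and reduce to the almost simple case, where Theorem~\ref{Almostsimplewithtriangle} produces the forbidden triangle. So suppose $G$ is a counterexample of minimal order: the block graph $\Gamma_B(G)$ has no triangle containing $p$, yet $G$ is not $p$-solvable. First I would observe that $p$ must divide $|G|$ and that, by Lemma~\ref{AdjacInQuoGraph}, for any normal subgroup $N\unlhd G$ the graph $\Gamma_B(G/N)$ also has no triangle containing $p$ (edges of the quotient graph lift to edges of $\Gamma_B(G)$, so a triangle upstairs would come from one downstairs). Hence by minimality every proper quotient $G/N$ with $N\neq 1$ is $p$-solvable. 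Combined with the fact that an extension of a $p$-solvable group by a $p$-solvable group is $p$-solvable, this forces a unique minimal normal subgroup $N$ of $G$, and $N$ is \emph{not} $p$-solvable; in particular $N$ is a direct product $N\cong S_1\times\dots\times S_t$ of copies of a single nonabelian simple group $S$ with $p\mid |S|$, and $G$ acts transitively on the factors by conjugation.

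\textbf{Reduction to one simple factor and its normalizer.} Next I would pass to $N_G(S_1)/C_G(S_1)$, which is an almost simple group $H$ with socle $S\cong S_1$ satisfying $S\le H\le \operatorname{Aut}(S)$. The key point is to transport a triangle from $\Gamma_B(H)|_{\pi(S)}$ back up to $\Gamma_B(G)$. Here I expect to use the wreath-product structure: by the transitivity of the $G$-action on $\{S_1,\dots,S_t\}$ one has $G/C_G(N)\hookrightarrow H_0\wr\operatorname{Sym}(t)$ for a suitable almost simple $H_0$, and Lemma~\ref{CentWrProd} controls centralizers of the relevant subgroups so that for a prime $\ell\mid|S|$ one can arrange a Sylow-type subgroup $L$ of $N$ with $C_G(L)$ small enough that the principal $\ell$-block of $N$ (or of the base group of the wreath product) is uniquely covered in $G$. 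Then Lemma~\ref{PrinUniqCovLiftEdge} lifts the edges. The cleanest route is: apply Theorem~\ref{Almostsimplewithtriangle} to $H$ to get, for the prime $p$, a triangle on vertices $\{p,r,\ell\}$ inside $\Gamma_B(H)|_{\pi(S)}$; then show each of the three edges lifts to $\Gamma_B(G)$. For the edge involving $p$ this is Lemma~\ref{pcoverofalmost}-style unique covering (one needs $C_G(\text{Sylow }p\text{ of }N)$ to be a $p$-group, which follows from \cite[Lemma 2.2]{BZ08} applied factorwise together with Lemma~\ref{CentWrProd}); for the edges among $p,r,\ell$ not already handled one invokes Theorem~\ref{cent-all} on each simple factor to bound $C_A(R)$, again combined with Lemma~\ref{CentWrProd} to pass from $S$ to $N$, and then Lemma~\ref{IndIrrUniCovBl} or Lemma~\ref{NormSylCentUniLiftPrin} to get unique covering and Lemma~\ref{PrinUniqCovLiftEdge} to lift. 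This yields a triangle in $\Gamma_B(G)$ containing $p$, contradicting the hypothesis.

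\textbf{Main obstacle.} The delicate part is bookkeeping the centralizers through the two intermediate layers: first from $S$ up to the direct product $N=S_1\times\dots\times S_t$, and then from $N$ up to $G$, which need not split over $N$ and which permutes the factors. Lemma~\ref{CentWrProd} is tailored for exactly the first issue (it shows that centralizing a full product $L_1\times\dots\times L_t$ kills the permutation part), but one still has to check that the almost simple group governing each factor is a subgroup of $\operatorname{Aut}(S)$ to which Theorems~\ref{cent-all} and~\ref{Almostsimplewithtriangle} apply, and that the unique-covering conclusions survive quotienting by $C_G(N)$ and the diagonal-type subtleties when $t>1$. Once the centralizer of the relevant $r$- and $p$-local subgroup of $N$ is shown to lie in the preimage of $\widetilde A_0$ (resp. is a $p$-group), the chain Lemma~\ref{NormSylCentUniLiftPrin}/\ref{IndIrrUniCovBl} $\Rightarrow$ unique cover $\Rightarrow$ Lemma~\ref{PrinUniqCovLiftEdge} $\Rightarrow$ edge in $\Gamma_B(G)$ runs smoothly, and assembling the three edges gives the triangle. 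The conclusion that $G$ is $p$-solvable then follows, completing the contradiction and hence the proof.
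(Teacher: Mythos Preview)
Your overall strategy---minimal counterexample, unique minimal normal subgroup $N\cong S^t$ with $p\mid|S|$, reduce to an almost simple group and invoke Theorem~\ref{Almostsimplewithtriangle}---is correct and is exactly what the paper does. The difference is in how the reduction is executed, and the paper's route is both simpler and avoids a pitfall in yours.

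The paper does \emph{not} pass to $N_G(S_1)/C_G(S_1)$, nor does it re-invoke Theorem~\ref{cent-all} prime-by-prime. Instead it introduces the \emph{normal} subgroup $M:=G\cap A^t$ (the base-group part of $G$ inside $A\wr\mathrm{Sym}(t)$) and makes one uniform observation: for \emph{every} prime $q\in\pi(S)$, a Sylow $q$-subgroup $Q$ of $M$ contains a Sylow $q$-subgroup $Q_0$ of $S^t=S_1\times\dots\times S_t$, so Lemma~\ref{CentWrProd} alone gives $C_G(Q)\le C_G(Q_0)\le A^t\cap G=M$. By Lemma~\ref{NormSylCentUniLiftPrin} the principal $q$-block of $M$ is then uniquely covered in $G$ for all $q\in\pi(S)$ simultaneously, and by Lemma~\ref{PrinUniqCovLiftEdge} the whole restricted graph $\Gamma_B(M)|_{\pi(S)}$ embeds in $\Gamma_B(G)$. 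Now $\overline{M}:=M/C_M(S_1)$ is almost simple with socle $S$; Lemma~\ref{AdjacInQuoGraph} pushes the no-triangle hypothesis down to $\overline{M}$, and Theorem~\ref{Almostsimplewithtriangle} delivers the contradiction. No separate treatment of $p$, $r$, $\ell$ is needed, and Theorem~\ref{cent-all} stays packaged inside Theorem~\ref{Almostsimplewithtriangle} where it belongs.

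Your route has a genuine snag: when $t>1$, the subgroup $N_G(S_1)$ is typically \emph{not} normal in $G$, so there is no block-covering mechanism available to lift edges from $\Gamma_B\bigl(N_G(S_1)/C_G(S_1)\bigr)$ back to $\Gamma_B(G)$; Lemmas~\ref{PrinUniqCovLiftEdge}, \ref{NormSylCentUniLiftPrin} and \ref{AdjacInQuoGraph} all require normality. Your fallback of invoking Theorem~\ref{cent-all} on each factor to control $C_A(R)$ only yields $C_G(R^t)\le \widetilde A_0^t\cap G$, which again points you toward an intermediate normal subgroup of the shape $G\cap(\text{something})^t$---i.e.\ essentially the paper's $M$. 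Once you use $M$, the prime-by-prime analysis becomes unnecessary.
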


\begin{proof}
Assume that $G$ is a minimal counterexample to Theorem \ref{groupswithnotrianglerestate}. We deduce a contradiction
by the following steps.

\vspace{1ex}

\noindent{\bf Step 1.} {\it The group $G$ has a unique minimal normal subgroup which must be non-abelian and have
 order divisible by $p$. }

\vspace{1ex}

Assume that $G$ has two distinct minimal normal subgroups $N_1$ and $N_2$.
If $\Gamma_B(G/N_i)$ has a triangle containing $p$, then so does $\Gamma_B(G)$ by Lemma~\ref{AdjacInQuoGraph}.
Thus, by the minimality of $|G|$, it follows that both $G/N_1$ and $G/N_2$ are $p$-solvable.
Since $G$ embeds into the direct product $G/N_1\times G/N_2$, we have that $G$ is $p$-solvable, giving a contradiction.
So $G$ must have a unique minimal normal subgroup. Moreover, it must be non-abelian and  have order divisible by $p$,
 otherwise $G/N$ and $N$ are both $p$-solvable.

\vspace{1ex}

We now assume the direct product of $t$ copies of $S$, simply denoted by  $S^t$, is the unique \mbox{minimal} normal subgroup of $G$ so that
$S^t\leq G \leq {\rm Aut}(S)\wr {\rm Sym}(t)$, where $t\in \mathbb{N}$ and $S$ is
a nonabelian simple group. Write $A={\rm Aut}(S)$ and $M=A^t\cap G$.

\vspace{1ex}

\noindent{\bf Step 2.} {\it The subgraph $\Gamma_B(M)|_{\pi(S)}$  has no triangle containing $p$.}

\vspace{1ex}

Let $q$ be a prime divisor of $|S|$ and $Q\in {\rm Syl}_q(M)$.
As $S^t\leq M$ it follows that there exists $Q_0\in {\rm Syl}_q(S^t)$ such that $Q_0\leq Q$.
Therefore $C_G(Q)\leq C_G(Q_0)$, which by Lemma~\ref{CentWrProd}, is contained in $A^t$.
Thus $C_G(Q)\leq M$.
Therefore, by Lemma~\ref{NormSylCentUniLiftPrin}, for any prime divisor $q$ of $|S|$
the principal $q$-block of $G$ is the unique $q$-block covering the principal $q$-block of $M$.
Now,  applying Lemma~\ref{PrinUniqCovLiftEdge}, we get that $\Gamma_B(M)|_{\pi(S)}$ is a subgraph of $\Gamma_B(G)$.
In particular, the block graph $\Gamma_B(M)|_{\pi(S)}$ has no triangle containing $p$ as the block graph
$\Gamma_B(G)$ does.

\vspace{1ex}

Since $N_M(S)=M$, we have $C_M(S)\unlhd M$. Write $\overline{M}=M/C_M(S)$.

\vspace{1ex}

\noindent{\bf Step 3.} {\it The subgraph $\Gamma_B (\overline{M})|_{\pi(S)}$  has no triangle containing $p$.}

\vspace{1ex}

 By Lemma \ref{AdjacInQuoGraph}, the block subgraph $\Gamma_B (\overline{M})|_{\pi(S)}$
is a subgraph of $\Gamma_B(M)|_{\pi(S)}$. Hence, the subgraph $\Gamma_B (\overline{M})|_{\pi(S)}$ has no triangle containing $p$.

\vspace{1ex}

\noindent{\bf Final contradiction.}  Notice that $\overline{M}$ is almost simple with socle isomorphic to $S$.
Finally,  we get a contradiction by  applying Theorem \ref{Almostsimplewithtriangle}.
\end{proof}

\subsection*{Acknowledgements}
Most of the work was done during the visit of the second author at the TU Kaiserslautern from July 2016 to July 2017.
The authors are deeply grateful to Professor Jiping Zhang for pointing out to us Theorem \ref{equivalentcondition},
to Professor Wolfgang Willems for his suggestion about the problems in this paper,
 and to Professor Gunter Malle for his invaluable advice and support.

\end{document}